\newcommand{\rrvert}{\vert}
\newcommand{\llvert}{\vert}
\def\sfrac#1#2{#1/#2}
\newcommand{\eqref}[1]{(\ref{#1})}
\newcommand{\R}{\mathbb{R}}
\newcommand{\N}{\mathbb{N}}
\newcommand{\C}{\mathbb{C}}
\newcommand{\E}{\mathbb{E}}
\newcommand{\calG}{\mathcal{G}}
\newcommand{\eps}{\varepsilon}
\let\Pr\relax
\newcommand{\Pr}{\operatorname{Pr}}
\newcommand{\sgn}{\operatorname{sgn}}
\newcommand{\Var}{\operatorname{Var}}
\newcommand{\Binom}{\operatorname{Binom}}
\newcommand{\Pois}{\operatorname{Pois}}
\newcommand{\symdiff}{\Delta}
\newtheorem{theorem}{Theorem}[section]
\newtheorem{lemma}[theorem]{Lemma}
\newtheorem{proposition}[theorem]{Proposition}
\renewcommand{\C}{\mathcal{C}}
\newcommand{\cond}{\mathscr{C}_{\mathrm{eff}}}
\newcommand{\res}{\mathscr{R}_{\mathrm{eff}}}
\newcommand{\acc}{\operatorname{acc}}
\newcommand{\hypergeom}{\operatorname{HyperGeom}}
\begin{document}
\begin{frontmatter}

\title{Belief propagation, robust reconstruction and optimal recovery
of block models}
\runtitle{Optimal recovery of block models}

\begin{aug}
\author[A]{\fnms{Elchanan}~\snm{Mossel}\thanksref{M1,M2,T1}},
\author[B]{\fnms{Joe}~\snm{Neeman}\corref{}\thanksref{M3,M4,T1}\ead[label=e2]{joeneeman@gmail.com}}
\and
\author[C]{\fnms{Allan}~\snm{Sly}\thanksref{M2,M5,T2}}
\runauthor{E. Mossel, J. Neeman and A. Sly}
\affiliation{University of Pennsylvania\thanksmark{M1},
University of California, Berkeley\thanksmark{M2},\\
University of Bonn\thanksmark{M3},
University of Texas, Austin\thanksmark{M4}\\ and
Australian National University\thanksmark{M5}}
\address[A]{E. Mossel\\
Department of Statistics\\
The Wharton School\\
University of Pennsylvania\\
400 Jon M. Huntsman Hall\\
3730 Walnut Street\\
Philadelphia, Pennsylvania 19104\\
USA\\
and\\
Department of Statistics\\
University of California, Berkeley\\
367 Evans Hall\\
Berkeley, California 94720\\
USA}
\address[B]{J. Neeman\\
Hausdorff Center for Mathematics\\
University of Bonn\\
Endenicher Allee 60\\
D-53115 Bonn\\
Germany\\
and\\
Department of Mathematics\\
University of Texas, Austin\\
RLM 8.100\\
2515 Speedway Stop C1200\\
Austin, Texas 78712\\
USA\\
\printead{e2}}
\address[C]{A. Sly\\
Department of Statistics\\
University of California, Berkeley\\
367 Evans Hall\\
Berkeley, California 94720\\
USA\\
and\\
Mathematical Sciences Institute\\
The Australian National University\\
John Dedman Building 27\\
Union Lane\\
Canberra, ACT 2601\\
Australia}
\end{aug}
\thankstext{T1}{Supported by NSF Grant DMS-11-06999 and DOD ONR Grants
N000141110140 and N00014-14-1-0823 and Grant 328025 from the Simons Foundation.}
\thankstext{T2}{Supported by an Alfred Sloan Fellowship and NSF Grant
DMS-12-08338.}

%
\received{\smonth{9} \syear{2014}}
%
\revised{\smonth{4} \syear{2015}}

%
\begin{abstract}
We consider the problem of reconstructing sparse symmetric block models
with two blocks
and connection probabilities $a/n$ and $b/n$ for inter- and intra-block
edge probabilities, respectively.
It was recently shown that one can do better than a random guess if and
only if $(a-b)^2 > 2(a+b)$.
Using a variant of belief propagation, we give a reconstruction
algorithm that is \emph{optimal} in the sense
that if $(a-b)^2 > C (a+b)$ for some constant $C$ then our algorithm
maximizes the
fraction of the nodes labeled correctly. Ours is the only algorithm
proven to
achieve the optimal fraction of nodes labeled correctly.
Along the way, we prove some results of independent interest regarding
{\em robust reconstruction} for the Ising model on regular and Poisson trees.
\end{abstract}

%
\begin{keyword}[class=AMS]
\kwd[Primary ]{05C80}
\kwd{60J20}
\kwd[; secondary ]{91D30}
\end{keyword}
\begin{keyword}
\kwd{Stochastic block model}
\kwd{unsupervised learning}
\kwd{belief propagation}
\kwd{robust reconstruction}
\end{keyword}
\end{frontmatter}

\setcounter{footnote}{2}

\section{Introduction}\label{sec1}

\subsection{Sparse stochastic block models}
Stochastic block models were introduced more than 30 years ago~\cite{HLL83}
in order to study the problem of community detection in random graphs.
In these models, the nodes in a graph are divided into two or more communities,
and then the edges of the graph are drawn independently at random, with
probabilities depending on which communities the edge lies between.
In its simplest incarnation---which we will study here---the model
has $n$ vertices divided into
two classes of approximately equal size, and two parameters:
$a/n$ is the probability that each within-class edge will appear,
and $b/n$ is the probability that each between-class edge will appear.
Since their \hyperref[sec1]{Introduction}, a large body of literature has been written about
stochastic block models,
and a multitude of efficient algorithms have been developed for the
problem of inferring
the underlying communities from the graph structure. To name a few, we now
have algorithms based on maximum-likelihood methods~\cite{SN97}, belief
propagation~\cite{Z2011}, spectral methods~\cite{M01}, modularity
maximization~\cite{BC09}
and a number of combinatorial methods~\cite{BCLS87,DF89,JS98,CK01}.

Early work on the stochastic block model mainly focused on fairly dense
graphs: Dyer and Frieze~\cite{DF89}; Snijders and Nowicki~\cite{SN97};
and Condon and Karp~\cite{CK01} all gave algorithms that
will correctly recover the exact communities in a graph from the stochastic
block model, but only when $a$ and $b$ are polynomial in $n$.
In a substantial improvement, McSherry~\cite{M01} gave a spectral algorithm
that succeeds when $a$ and $b$ are logarithmic in $n$; this had been
anticipated previously by Boppana~\cite{B87}, but his proof was
incomplete. McSherry's parameter range was later
equalled by Bickel and Chen~\cite{BC09} using an algorithm based on
modularity maximization.

We also note that related but different problems of planted coloring
were studied in
Blum and Spencer~\cite{BlumSpencer95} in the dense case, and Alon and
Kahale~\cite{AlonKahale97} in the sparse case.

The $O(\log n)$ barrier is important because if the average degree of a
block model is logarithmic or larger, it is possible to exactly recover
the communities with high probability as $n \to\infty$. On the other hand,
if the average degree is less than logarithmic then some fairly straightforward
probabilistic arguments show that it is not possible to completely recover
the communities. When the average degree is constant, as it will be
in this work, then one cannot get more than a constant fraction of the labels
correct.

Despite these apparent difficulties, there are important
practical reasons for considering block models with constant average degree.
Indeed, many real networks are very sparse.
For example, Leskovec et~al. \cite{LLDM08}
and Strogatz~\cite{Strogatz01}
collected and studied a vast collection of large network
datasets, many of which had millions of nodes,
but most of which had an average degree of no more than 20; for instance,
the LinkedIn network studied by Leskovec et~al. had approximately
seven million nodes, but only 30 million edges.
Moreover, the very fact that sparse block models are impossible to infer
exactly may be taken as an argument for studying them: in real networks
one does not
expect to recover the communities with perfect accuracy, and so it makes
sense to study models in which this is not possible either.

Although sparse graphs are immensely important, there is not
yet much known about very sparse stochastic block models.
In particular, there is a gap between what is known for block models
with a constant
average degree and those with an average degree that grows with the
size of
the graph.
Until recently, there was only one algorithm---due to~\cite{CO10}, and
based on spectral methods---which was guaranteed to do anything at all
in the
constant-degree regime,
in the sense that it produced communities which have a better-than-50\% overlap
with the true communities.

Despite the lack of rigorous results, a beautiful conjectural picture has
recently emerged, supported by simulations and deep but nonrigorous
physical intuition. We are referring specifically to work of Decelle
et~al. \cite{Z2011}, who conjectured the existence of a threshold,
below which
is it not possible to find the communities better than by guessing randomly.
In the case of two communities of equal size, they pinpointed the location
of the conjectured threshold.
This threshold has since been rigorously confirmed;
a sharp lower bound on its location was given by the authors~\cite
{MoNeSl13}, while sharp
upper bounds were given independently by Massouli\'e~\cite{Massoulie13}
and by the authors~\cite{MoNeSl14}.

\subsection{Our results: Optimal reconstruction}
Given that it is not possible to completely recover the communities
in a sparse block model, it is natural to ask how accurately one may
recover them. In~\cite{MoNeSl13}, we gave an upper bound on the recovery
accuracy; here, we will show that that bound is tight---at least, when the
signal to noise ratio is sufficiently high---by giving an algorithm which
performs as well as the upper bound. Our main result may be stated
informally as follows.\footnote{An extended abstract stating the
results of the current paper~\cite{MoNeSl14b} appeared in the
proceedings of COLT 2014 (where it won the best paper award).}

%
\begin{theorem}\label{teo:intro}
Let $p_G(a,b)$ be the highest asymptotic accuracy that any algorithm
can achieve
in reconstructing communities of the block model with parameters $a$
and $b$. We provide an algorithm that achieves accuracy of $p_G(a,b)$
with probability tending to 1 as $n \to\infty$, provided that $(a-b)^2/(a+b)$
is sufficiently large.
\end{theorem}

To put Theorem~\ref{teo:intro} into the context of
earlier work~\cite{MoNeSl13,MoNeSl14,Massoulie13} by the authors
and Massouli\'e, those works showed that $p_G(a,b) > 1/2$ if and only if
$(a-b)^2 > 2(a+b)$; in the case that $p_G(a,b) > 1/2$, they also provided
algorithms whose accuracy was bounded away from $1/2$. However, those
algorithms were not guaranteed (and are not expected) to have \emph
{optimal} accuracy, only
\emph{nontrivial} accuracy. In other words, previous results have
shown that for every value of $a,b$ such that $(a-b)^2 > 2(a+b)$ there
exists an algorithm that recovers (with high probability) a fraction
$q(a,b) > 1/2$ of the nodes correctly. Our results provide an algorithm
that [when $(a-b)^2 > C(a+b)$ for a large constant $C$] recovers the
optimal fraction of nodes $p_G(a,b)$ in the sense that it is
information theoretically impossible for any other algorithms to
recover a bigger fraction.

Our new algorithm, which is based on
belief propagation, is essentially an algorithm for locally improving an
initial guess at the communities. In our current analysis, the initial guess
is provided by a previous algorithm of the authors~\cite{MoNeSl14}, which
we use as a black box.
We should mention that standard
belief propagation with random uniform initial messages and without our
modifications and also without a good initial
guess, is also conjectured to have optimal accuracy~\cite{Z2011}.
However, at the moment, we do not know of any approach to analyze the
vanilla version of BP for this problem.

As a major part of our analysis, we prove a result about broadcast
processes on trees
that may be of independent interest. Specifically, we prove that if the
signal-to-noise ratio of the broadcast process is sufficiently high, then
adding extra noise at the leaves of a large tree does not hurt our
ability to
guess the label of the root given the labels of the leaves.
In other words, we show that for a certain model on trees,
belief propagation initialized with arbitrarily noisy messages
converges to the optimal solution as the height of the tree
tends to infinity.
We prove our result
for regular trees and Galton--Watson trees with Poisson offspring, but we
conjecture that it also holds for general trees, and even if the
signal-to-noise ratio is low.

We should point out that spectral algorithms---which, due to their efficiency,
are very popular algorithms for this model---empirically do not
perform as well
as BP on very sparse graphs (see, e.g.,~\cite{KMMNSZZ13}). This is
despite the
recent appearance of two new spectral algorithms, due to~\cite{KMMNSZZ13}
and~\cite{Massoulie13}, which were specifically designed for clustering
sparse block models. The algorithm of~\cite{KMMNSZZ13} is
particularly relevant
here, because it was derived by linearizing belief propagation; empirically,
it performs well all the way to the impossibility threshold, although
not quite as well as BP. Intuitively, the linear aspects of spectral algorithms
(i.e., the fact that they can be implemented---via the power method---using
local linear updates) explain why they cannot achieve optimal performance.
Indeed, since the optimal local updates (those given by BP) are nonlinear,
any method based on linear updates will be suboptimal.

\subsection{Dramatis personae}

Before defining everything carefully, we briefly introduce the three main
objects and their relationships.
\begin{itemize}
\item The \emph{block model detection problem} is the problem
of detecting communities in a sparse stochastic block model.
\item In the \emph{tree reconstruction problem}, there is a two-color
branching process in which every node has some children of its own color
and some children of the other color. We observe the family tree
of this process and also all of the colors in some generation; the goal is
to guess the color of the original node.
\item The \emph{robust tree reconstruction problem} is like the tree
reconstruction problem, except that instead of observing exactly the
colors in some generation, our observations contain some noise.
\end{itemize}
The two tree problems are related to the block model problem
because a neighborhood in the stochastic block model
looks like a random tree from one of the tree problems.
This connection was proved in~\cite{MoNeSl13}, who also showed that
tree reconstruction is ``easier'' than the block model detection
(in a sense that we will make precise later). The current work
has two main steps: we show that block model detection is
``easier'' than robust tree reconstruction, and we show that---for a certain range of parameters---robust tree reconstruction
is exactly as hard as tree reconstruction.

\section{Definitions and main results}

\subsection{The block model}

In this article, we restrict the stochastic block model to the case
of two classes with roughly equal size.

%
%
\begin{definition}[(Stochastic block model)]
The {\em block model} on $n$ nodes
is constructed by first labeling each node $+$ or $-$ with equal
probability independently.
Then each edge is included in the graph independently, with probability
$a/n$ if its endpoints have the same label and $b/n$ otherwise. Here,
$a$ and $b$ are two positive parameters.
We write $\calG(n, a/n, b/n)$ for this distribution of (labeled) graphs.
\end{definition}
For us, $a$ and $b$ will be fixed,
while $n$ tends to infinity. More generally, one may consider the case
where $a$ and $b$ may be allowed to
grow with $n$.
As conjectured by~\cite{Z2011},
the relationship between $(a-b)^2$ and $(a+b)$ turns out to be of
critical importance for
the reconstructability of the block model.
%
%
\begin{theorem}[(Threshold for nontrivial detection~\cite
{Massoulie13,MoNeSl13,MoNeSl14})]
For the block model with parameters $a$ and $b$, it holds that:
\begin{itemize}
\item
If $(a - b)^2 < 2(a+b)$ then
the node labels cannot be inferred from the unlabeled graph with better
than $50\%$ accuracy (which could also be done just by random guessing).
\item If $(a - b)^2 > 2(a+b)$ then it is
possible to infer the labels with better than $50\%$ accuracy.
\end{itemize}
\end{theorem}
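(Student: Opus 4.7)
The plan is to handle the two parts of the theorem separately, since they involve rather different ideas. The impossibility direction reduces to a tree reconstruction statement via a local-neighborhood coupling, while the detection direction requires the construction of an explicit statistic on the graph that separates the two communities.

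For the impossibility direction when $(a-b)^2 < 2(a+b)$, write $d = (a+b)/2$ and $\lambda = (a-b)/(a+b)$, so that $d\lambda^2 = (a-b)^2/(2(a+b)) < 1$; this places us below the Kesten-Stigum threshold. First I would prove that for a uniformly random vertex $v$ and $R = R(n) \to \infty$ growing slowly enough that $d^{R} = n^{o(1)}$, the ball $B_R(v)$ in the block model, together with its labels, converges in total variation to a labelled Galton-Watson tree $T_R$ whose root carries a uniform $\pm 1$ label, each vertex has $\Pois(d)$ children, and each child's label agrees with its parent's with probability $a/(a+b)$. This is a standard sprinkling and coupling argument exploiting sparsity. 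Next I would use the $\pm$-symmetry of the block model (flipping all labels preserves the distribution of the unlabelled graph) to argue that, asymptotically, the optimal accuracy for recovering $\sigma_v$ from the unlabelled graph $G$ is no larger than the optimal accuracy for recovering the root of $T_R$ from the labels at depth $R$. The Kesten-Stigum theorem then finishes: since $d \lambda^2 < 1$, the mutual information between the root label and the depth-$R$ boundary vanishes as $R \to \infty$, so no algorithm can beat $1/2$ accuracy by a non-vanishing margin.

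For the detection direction when $(a-b)^2 > 2(a+b)$, the goal is to construct a statistic whose mean splits according to the community labels. The natural object is the adjacency matrix $A$; in mean-field heuristics it has outlier eigenvalues near $d$ and $d\lambda$, with the second eigenvector correlated with the true partition, so one would hope to cluster via a top eigenvector. However, in the constant-average-degree regime the spectrum of $A$ is polluted by vertices of degree of order $\log n / \log \log n$ and by short cycles, so instead I would pass to the non-backtracking matrix, or equivalently count signed non-backtracking (or self-avoiding) walks of length $r = c \log n$ from each vertex. The key estimate is a first- and second-moment computation showing that a suitable label-weighted walk count from $v$ concentrates and has a sign positively correlated with $\sigma_v$ provided $d\lambda^2 > 1$. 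Thresholding then yields a labelling whose accuracy is bounded away from $1/2$.

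The main obstacle is the detection side, and in particular the second-moment computation. The sum over walks is dominated by unruly contributions from short cycles and high-degree vertices; controlling these requires a fine structural analysis of sparse random graphs on scales up to $\log n$, which is precisely the regime where naive spectral methods fail. Obtaining the sharp threshold $(a-b)^2 > 2(a+b)$, rather than a weaker constant multiple, amounts to tracking the leading-order coefficient in this moment computation exactly; this is where the works of Massouli\'e and of the authors use different but related observables (non-backtracking spectra versus signed self-avoiding walks) to pin down the right constant.
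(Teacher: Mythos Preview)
The paper does not actually prove this theorem; it is quoted as background, with the impossibility half attributed to \cite{MoNeSl:13} and the detection half to \cite{Massoulie:13,MoNeSl:14}. The only trace of an argument in the present paper is the remark after Theorem~\ref{thm:mns:13}, which says the impossibility direction in \cite{MoNeSl:13} proceeds in two steps: a coupling argument giving $p_G(a,b)\le p_T(a,b)$, followed by the tree threshold (Theorem~\ref{thm:tree-threshold}) to conclude $p_T(a,b)=\tfrac12$ when $(a-b)^2<2(a+b)$. Your sketch of the impossibility half matches this outline, and your sketch of the detection half correctly identifies the non-backtracking/self-avoiding walk statistics used in \cite{Massoulie:13} and \cite{MoNeSl:14}.

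Two places where your impossibility sketch is looser than the actual argument and would need work to become a proof. First, the comparison you invoke is not quite the right one: you compare ``recovering $\sigma_v$ from the unlabelled $G$'' to ``recovering the root of $T_R$ from the depth-$R$ labels,'' but in the graph problem you do not see any labels at all, so the inequality in that direction is not immediate from the coupling alone. The argument in \cite{MoNeSl:13} instead grants the algorithm \emph{extra} information---all labels outside $B_R(v)$---and shows that even then the conditional law of $\sigma_v$ is governed by the tree broadcast; monotonicity of information then gives the bound. Second, the $\pm$-symmetry observation by itself only says no single label can be guessed with probability $>\tfrac12$; to control the \emph{fraction} correct up to a global flip one passes (as in \cite{MoNeSl:13}) to a two-vertex statement about deciding whether $\sigma_u=\sigma_v$, and it is this pairwise quantity that the tree coupling bounds. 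Finally, a minor attribution point: the subcritical half of the tree threshold you invoke is due to Bleher--Ruiz--Zagrebnov and Evans--Kenyon--Peres--Schulman rather than Kesten--Stigum, whose contribution is the supercritical direction.
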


\subsection{Broadcasting on trees}
Our study of optimal reconstruction accuracy is based on the local structure
of $\calG(n, a/n, b/n)$, which requires the notion of the {\em
broadcast process on a tree}.

Consider an infinite, rooted tree. We will identify such a tree $T$
with a
subset of $\N^*$, the set of finite strings of natural numbers,
with the property that if
$v \in T$ then any prefix of $v$ is also in $T$. In this way, the root
of the
tree is naturally identified with the empty string, which we will denote
by $\rho$.
We will write $uv$ for the concatenation of the strings $u$ and $v$, and
$L_k(u)$ for the $k$th-level descendents of~$u$; that is,
$L_k(u) = \{uv \in T: \llvert v\rrvert = k\}$. Also, we will write
$\C(u) \subset\N$ for the indices of $u$'s children relative to itself,
that is, $i \in\C(u)$ if and only if $ui \in L_1(u)$.

%
\begin{definition}[(Broadcast process on a tree)]
Given a parameter $\eta\ne1/2$ in $[0, 1]$ and a tree $T$, the {\em
broadcast process on $T$} is
a two-state Markov process $\{\sigma_u: u \in T\}$ defined as follows:
let $\sigma_\rho$ be $+$ or $-$ with probability $\frac{1}{2}$.
Then, for each $u$ such that $\sigma_u$ is defined,
independently for every $v \in L_1(u)$ let $\sigma_v = \sigma_u$ with
probability
$1-\eta$ and $\sigma_v = -\sigma_\rho$ otherwise.
\end{definition}

This broadcast process has been extensively studied, where the major
question is
whether the labels of vertices far from the root of the tree give
any information on the label of the root.
For general trees, this question was answered definitively by Evans
et~al. \cite{EvKePeSc00}, after many other contributions including~\cite
{KestenStigum66,BlRuZa95}.
The complete statement of the theorem requires the notion of \emph
{branching number},
which we would prefer not to define here (see~\cite{EvKePeSc00}).
For our purposes, it suffices to know that a
$d$-ary tree has branching number $d$ and that a Poisson branching
process tree
with mean $d > 1$ has branching number $d$ (almost surely, and
conditioned on nonextinction).
%
%
\begin{theorem}[(Tree reconstruction threshold~\cite{EvKePeSc00})]
\label{teo:tree-threshold}
Let $\theta= 1-2\eta$ and $d$ be the branching number of $T$.
Then
\[
\E\bigl[\sigma_{\rho} \mid\sigma_u: u \in
L_k( \rho) \bigr] \to0
\]
in probability as $k \to\infty$ if and only if $d \theta^2 \leq1$.
\end{theorem}
The theorem implies in particular that if $d \theta^2 > 1$ then for
every $k$ there is an algorithm which guesses
$\sigma_\rho$ given $\sigma_{L_k(\rho)}$, and which succeeds with
probability
bounded away from $1/2$. If $d \theta^2 \leq1$ there is no such algorithm.

\subsection{Robust reconstruction on trees}

Janson and Mossel~\cite{JansonMossel04} considered a version of the
tree broadcast process
that has extra noise at the leaves.
%
%
\begin{definition}[(Noisy broadcast process on a tree)]
Given a broadcast process $\sigma$ on a tree $T$ and a parameter
$\delta\in[0, 1/2)$, the
{\em noisy broadcast process on $T$} is the process $\{\tau_u: u \in
T\}$ defined
by independently taking $\tau_u = -\sigma_u$ with probability $\delta
$ and $\tau_u = \sigma_u$
otherwise.
\end{definition}

We observe that the noise present in $\sigma$ and the noise present in
$\tau$ have
qualitatively different roles, since the noise present in $\sigma$
propagates down the tree
while the noise present in $\tau$ does not. Janson and Mossel~\cite
{JansonMossel04} showed
that the range of parameters for which $\sigma_\rho$ may be
nontrivially reconstructed from $\sigma_{L_k}$ is
the same as the range for which $\sigma_\rho$ may be nontrivially
reconstructed from $\tau_{L_k}$. In other
words, additional noise at the leaves has no effect on whether the
root's signal
propagates arbitrarily far. One of our main results is a quantitative
version of this statement
(Theorem~\ref{teo:robust-tree-intro}): we show that for a certain
range of
parameters, the presence of noise at the leaves does not even affect
the accuracy with
which the root can be reconstructed.

\subsection{The block model and broadcasting on trees}
The connection between the community reconstruction problem on a graph
and the root reconstruction problem on a tree was first pointed out
in~\cite{Z2011} and made rigorous in~\cite{MoNeSl13}. The basic
idea is the following:
\begin{itemize}
\item
A neighborhood in $G$ looks like a Galton--Watson tree with offspring
distribution $\Pois((a+b)/2)$ [which almost surely has branching number
$d = (a + b)/2$].
\item
The labels on the neighborhood look as
though they came from a broadcast process with parameter $\eta= \frac
{b}{a+b}$.
\item
With these parameters, $\theta^2 d = \frac{(a-b)^2}{2(a+b)}$, and so
the conjectured threshold for community reconstruction is the same as the
proven threshold for tree reconstruction.
\end{itemize}
This local approximation can be formalized as convergence locally on
average, a~type of local weak convergence defined in~\cite{MMS12}.
We should mention that in the case
of more than two communities (i.e., in the case that the broadcast process
has more than two states) then the picture becomes rather more complicated,
and much less is known; see~\cite{Z2011,MoNeSl13} for some conjectures.

\subsection{Reconstruction probabilities on trees and graphs}
Note that Theorem~\ref{teo:tree-threshold} only answers the question
of whether one can achieve asymptotic reconstruction accuracy better
than $1/2$.
Here, we will be interested in more detailed information about the
actual accuracy of reconstruction,
both on trees and on graphs.

Note that in the tree reconstruction problem, the optimal estimator of
$\sigma_\rho$ given~$\sigma_{L_k(\rho)}$ is easy to write down: it is
simply the sign of
$X_{\rho,k}:= 2 \Pr(\sigma_\rho= + \mid\sigma_{L_k(\rho)}) - 1$.
Compared to the trivial procedure of guessing $\sigma_\rho$
completely at random,
this estimator has an expected gain of
\[
\E\bigl\llvert\Pr(\sigma_\rho= + \mid\sigma_{L_k(\rho)}) -
\tfrac{1}{2} \bigr\rrvert= \tfrac{1}{2} \E\bigl[\bigl\llvert\E[
\sigma_{\rho} \mid\sigma_{L_k(\rho)}]\bigr\rrvert\bigr].
\]
It is now natural to define:
%
%
\begin{definition}[(Tree reconstruction accuracy)]\label{def:tree-accuracy}
Let $T$ be an infinite Galton--Watson tree with $\Pois((a+b)/2)$
offspring distribution, and
$\eta= \frac{b}{a+b}$. Consider the broadcast process on the tree
with parameter $\eta$ and define
%
%
\begin{equation}
\label{eq:pT-def} p_T(a,b) = \frac{1}2 + \lim
_{k\to\infty} \E\biggl\llvert\Pr(\sigma_\rho=+\mid
\sigma_{L_k(\rho)}) - \frac{1}2 \biggr\rrvert.
\end{equation}
In words, $p_T(a,b)$ is the probability of correctly inferring $\sigma
_\rho$ given the
``labels at infinity''.
\end{definition}

Note that by Theorem~\ref{teo:tree-threshold},
$p_T(a,b) > 1/2$ if and only if $(a-b)^2 > 2(a+b)$.

We remark that the limit in Definition~\ref{def:tree-accuracy}
always exists because the right-hand side
is nonincreasing in $k$. To see this, it helps to write $p_T(a,b)$ in
a different way: let $\mu_k^+$ be the distribution of $\sigma
_{L_k(\rho)}$ given
$\sigma_\rho= +$ and let $\mu_k^-$ be the distribution of $\sigma
_{L_k(\rho)}$
given $\sigma_\rho= -$. Then
\[
\E\bigl\llvert\Pr(\sigma_\rho=+\mid\sigma_{L_k(\rho)}) -
\tfrac{1}2 \bigr\rrvert= \tfrac{1}2 d_{\mathrm{TV}} \bigl(
\mu_k^+,\mu_k^- \bigr),
\]
where $d_{\mathrm{TV}}$ denotes the total variation distance.
Next, note that since labels at levels
$k' > k$ are independent of $\sigma_\rho$ given $\sigma_{L_k(\rho)}$,
\[
\Pr(\sigma_\rho=+\mid\sigma_{L_k(\rho)}) = \Pr(
\sigma_\rho=+\mid\sigma_{L_k(\rho)}, \sigma_{L_{k+1}(\rho
)},
\sigma_{L_{k+2}(\rho)}, \dots).
\]
Hence, if we set $\nu_k^+$ to be the distribution of $\{\sigma
_{L_{k'}}(\rho): k' \ge k\}$
and similarly for~$\nu_k^-$, we have
\[
\E\bigl\llvert\Pr(\sigma_\rho=+\mid\sigma_{L_k(\rho)}) -
\tfrac{1}2 \bigr\rrvert= \tfrac{1}2 d_{\mathrm{TV}} \bigl(
\nu_k^+,\nu_k^- \bigr).
\]
Now the right-hand side is clearly nonincreasing in $k$, because $\nu
_{k+1}^+$ can
be obtained from $\nu_k$ by marginalization.

One of the main results of~\cite{MoNeSl13} is that the graph reconstruction
problem is at least as hard as the tree reconstruction problem in the
sense that for
any community-detection algorithm, the asymptotic accuracy of that algorithm
is bounded by $p_T(a,b)$.

%
\begin{definition}[(Graph reconstruction accuracy)]
Let $(G, \sigma)$ be a labeled graph on $n$ nodes.
If $f$ is a function that takes a graph and returns a labeling of it,
we write
\[
\acc(f, G, \sigma) = \frac{1}{2} + \biggl\llvert\frac{1}{n} \sum
_{v} 1 \bigl( \bigl(f(G) \bigr)_v =
\sigma_v \bigr) - \frac{1}{2} \biggr\rrvert
\]
for the accuracy of $f$ in recovering the labels $\sigma$.
For $\varepsilon> 0$, let
\[
p_{G,n,\varepsilon}(a,b) = \sup_f \sup\bigl\{p: \Pr\bigl(
\acc(f, G, \sigma) \ge p \bigr) \ge\varepsilon\bigr\},
\]
where the first supremum ranges over all functions $f$, and the
probability is taken
over $(G, \sigma) \sim\calG(n, a/n, b/n)$.
Let
\[
p_G(a,b) = \lim_{\varepsilon\to0} \limsup
_{n \to\infty} p_{G,n,\varepsilon}(a,b),
\]
where the limit exists because $p_{G,n,\varepsilon}(a,b)$ is monotonic in
$\varepsilon$.
\end{definition}
One should think of $p_{G}(a,b)$ as the optimal fraction of nodes that
can be reconstructed correctly by any algorithm
(not necessarily efficient) that only gets to observe an unlabeled graph.
More precisely, for any algorithm and any $p > p_G(a,b)$, the
algorithm's probability of achieving
accuracy $p$ or higher converges to zero as $n$ grows.
Note that the symmetry between the $+$ and $-$ is reflected in the
definition of $\acc$ (e.g., in the
appearance of the constant $1/2$), and also that $\acc$ is defined to
be large if $f$ gets most labels \emph{incorrect}
(because there is no way for an algorithm to break the symmetry between
$+$ and $-$).

An immediate corollary of the analysis of~\cite{MoNeSl13} implies
that graph reconstruction is
always at most as accurate as tree reconstruction.

%
\begin{theorem}[(Graph detection is harder than tree
reconstruction~\cite{MoNeSl13})]\label{teo:mns:13}
\[
p_G(a,b) \leq p_T(a,b).
\]
\end{theorem}
We remark that Theorem~\ref{teo:mns:13} is not stated explicitly
in~\cite{MoNeSl13}; because
the authors
were only interested in the case $(a-b)^2 \le2(a+b)$, the claimed
result was that
$(a-b)^2 \le2(a+b)$ implies $p_G(a,b) = \frac{1}2$. However, a cursory
examination of the
proof of~\cite{MoNeSl13}, Theorem 1, reveals that
the claim was proven in two stages: first, they prove via a coupling argument
that $p_G(a,b) \le p_T(a,b)$ and then they apply
Theorem~\ref{teo:tree-threshold} to show that $(a-b)^2 \le2(a+b)$
implies $p_T(a,b) = \frac{1}2$.

\subsection{Our results}

In this paper, we consider the high signal-to-noise case, namely the
case that $(a-b)^2$ is significantly larger than $2(a+b)$. In this
regime, we
give an algorithm (Algorithm~\ref{alg}) which achieves an accuracy of
$p_T(a,b)$.

%
\begin{theorem}\label{teo:alg-is-optimal}
There exists a constant $C$ such that if
$(a-b)^2 \ge C (a + b)$ then
\[
p_G(a,b) = p_T(a,b).
\]
Moreover, there is a polynomial time algorithm such that for all such
$a,b$ and every $\eps> 0$,
with probability tending to one as $n \to\infty$,
the algorithm reconstructs the labels with accuracy $p_G(a,b) - \eps$.
\end{theorem}

We will assume for simplicity that our algorithm is given the
parameters $a$ and~$b$.
This is a minor assumption because $a$ and $b$ can be estimated from
the data
to arbitrary accuracy~\cite{MoNeSl13}, Theorem 3.

A key ingredient of Theorem~\ref{teo:alg-is-optimal}'s proof is a procedure
for amplifying a clustering that is a slightly better than a random guess
to obtain optimal clustering.
In order to discuss this procedure, we define the problem of ``robust
reconstruction'' on trees.

%
\begin{definition}[(Robust tree reconstruction accuracy)]\label
{def:robust-tree-accuracy}
Consider the noisy tree broadcast process with\vspace*{1pt} parameters $\eta= \frac
{a}{a+b}$
and $\delta\in[0, 1/2)$ on a Galton--Watson
tree with offspring distribution $\Pois((a+b)/2)$.
We define the robust reconstruction accuracy as
\[
\tilde p_T(a,b) = \frac{1}2 + \liminf_{\delta\to1/2}
\liminf_{k\to
\infty} \E\biggl\llvert\Pr(\sigma_\rho=+\mid
\tau_{L_k(\rho)}) - \frac{1}2 \biggr\rrvert.
\]
\end{definition}

Our main technical result is that when $a-b$ is large enough then in
fact the extra noise does not have any effect
on the reconstruction probability.

%
\begin{theorem} \label{teo:robust-tree-intro}
There exists a constant $C$ such that if
$(a-b)^2 \ge C (a + b)$ then
\[
\tilde p_T(a,b) = p_T(a,b).
\]
\end{theorem}

We conjecture that the
robust reconstruction accuracy is independent of $\delta$ for any parameters,
and also for more general trees; however, our proof does not naturally
extend to
cover these cases.

\subsection{Algorithmic amplification and robust reconstruction}

The second\break main ingredient in Theorem~\ref{teo:alg-is-optimal}
connects the community detection problem to
the robust tree reconstruction problem: we show that given a suitable
algorithm for providing a better-than-random initial guess at the
communities, the community
detection problem is easier than the robust reconstruction problem, in
the sense that one can achieve an accuracy of $\tilde p_T(a,b)$.

%
\begin{theorem}\label{teo:alg-gets-ptilde}
For all $a$ and $b$, $p_G(a,b) \ge\tilde p_T(a,b)$.
Moreover, there is a polynomial time algorithm such that for all such
$a,b$ and every $\eps> 0$,
with probability tending to one as $n \to\infty$,
the algorithm reconstructs the labels with accuracy $\tilde p_T(a,b) -
\eps$.
\end{theorem}

Combining Theorem~\ref{teo:alg-gets-ptilde} with Theorems~\ref
{teo:mns:13} and~\ref{teo:robust-tree-intro}
proves Theorem~\ref{teo:alg-is-optimal}.
We remark that
Theorem~\ref{teo:alg-gets-ptilde} easily extends to other versions of
the block model (i.e., models with
more clusters or unbalanced classes);
however, Theorem~\ref{teo:robust-tree-intro} does not. In particular,
Theorem~\ref{teo:alg-is-optimal}
may not hold for general block models. In fact, one fascinating
conjecture of~\cite{Z2011}
says that for general block models, computational hardness enters the
picture (whereas it does
not play any role in our current work).

\subsection{Algorithm outline}

Before getting into the technical details, let us give an outline of our
algorithm: for every node $u$, we remove a neighborhood (whose radius
$r$ is
slowly increasing with $n$) of $u$ from the graph $G$. We then run a
black-box community-detection algorithm on what remains of $G$.
This is guaranteed to produce
some communities which are correlated with the true ones, but they may not
be optimally accurate. Then we return the neighborhood of $u$ to $G$, and
we consider the inferred communities on the boundary of that neighborhood.
Now, the neighborhood of $u$ is like a tree, and the true labels on its
boundary are distributed like $\sigma_{L_r(u)}$. The inferred labels
on the boundary are hence distributed like $\tau_{L_r(u)}$ for some
$0 \le\delta< \frac{1}2$, and so we
can guess the label of $u$ from them using robust tree reconstruction.
(In the previous sentence, we are implicitly claiming that the errors made
by the black-box algorithm are independent of the neighborhood of $u$.
This is because the edges in the neighborhood of $u$ are independent of
the edges in the rest of the graph, a fact that we will justify more carefully
later.)
Since robust tree reconstruction succeeds with probability $p_T$ regardless
of $\delta$, our algorithm attains this optimal accuracy even if the
black-box algorithm does not.

To see the connection between our algorithm and belief propagation,
note that finding the optimal estimator for the tree reconstruction
problem requires computing $\Pr(\sigma_u \mid\tau_{L_r(u)})$. On a tree,
the standard algorithm for solving this is exactly belief propagation. In
other words, our algorithm consists of multiple local applications of
belief propagation. Although we believe that a single global run of belief
propagation would attain the same performance, these local instances are
easier to analyze.

Finally, a word about notation. Throughout this article, we will use the
letters $C$ and $c$ to denote positive constants whose value may change
from line
to line. We will also write statements like ``for all $k \ge K(\theta,
\delta) \dots$''
as abbreviations for statements like ``for every $\theta$ and $\delta
$ there exists
$K$ such that for all $k \ge K \dots.$''

\section{Robust reconstruction on regular trees}
\label{sec:d-ary}

Our main effort is devoted to proving Theorem~\ref{teo:robust-tree-intro}.
Since the proof is quite involved,
we begin with a somewhat easier case of regular trees which already
contains the main ideas of the proof.
The adaptation to the case of Poisson random trees will be carried in
Section~\ref{sec:gw}.

First, we need to define the reconstruction and robust reconstruction
probabilities for regular
trees. Their definitions are analogous to Definitions~\ref{def:tree-accuracy}
and~\ref{def:robust-tree-accuracy}.

%
\begin{definition}
Let $\sigma$ be distributed according to the broadcast process
with parameter $\eta$ on an infinite $d$-ary tree. Let
$\tau$ be distributed according to the noisy broadcast process
with parameters $\eta$ and $\delta$ on the same tree.
We define
\begin{eqnarray*}
p_\mathrm{reg}(d, \eta) &=& \frac{1}2 + \lim_{k\to\infty}
\E\biggl\llvert\Pr(\sigma_\rho=+\mid\sigma_{L_k(\rho)}) -
\frac{1}2 \biggr\rrvert,
\\
\tilde p_\mathrm{reg}(d, \eta) &=& \frac{1}2 + \liminf
_{\delta\to1/2} \liminf_{k\to\infty} \E\biggl\llvert\Pr(
\sigma_\rho=+\mid\tau_{L_k(\rho)}) - \frac{1}2 \biggr
\rrvert.
\end{eqnarray*}
\end{definition}

%
\begin{theorem} \label{teo:robust_tree_regular}
Consider the broadcast process on the infinite $d$-ary tree where if $u
\in L_1(v)$ then
$\Pr(\sigma_u = \sigma_v) = \frac{1}{2}(1+\theta)$ (equivalently
$\E[\sigma_u \sigma_v] = \theta$).
There exists a constant $C$ such that if
$d \theta^2 > C$ then
\[
\tilde p_\mathrm{reg}(d, \eta) = p_\mathrm{reg}(d, \eta).
\]
\end{theorem}

\subsection{Magnetization}
Define
\begin{eqnarray*}
X_{u,k} &=& \Pr(\sigma_u=+ \mid\sigma_{L_k(u)}) -
\Pr(\sigma_u=- \mid\sigma_{L_k(u)}),
\\
x_k &=& \E(X_{u,k} \mid\sigma_u = +).
\end{eqnarray*}
Here, we say that $X_{u,k}$ is the \emph{magnetization} of $u$
given $\sigma_{L_k(u)}$.
Note that by the homogeneity of the tree, the definition
of $x_k$ is independent of $u$. A simple application
of Bayes' rule (see Lemma 1 of~\cite{BCMR06}) shows that $(1 + \E
\llvert X_{\rho,k} \rrvert )/2$ is the
probability of estimating $\sigma_\rho$ correctly given $\sigma
_{L_k(\rho)}$.

We may also define the noisy magnetization $Y$:
%
%
\begin{eqnarray} \label{eq:Y-def}
Y_{u,k} &=& \Pr(\sigma_u=+ \mid\tau_{L_k(u)}) -
\Pr(\sigma_u=- \mid\tau_{L_k(u)}),
\nonumber\\[-8pt]\\[-8pt]\nonumber
y_k &=& \E(Y_{u,k} \mid\sigma_u = +).
\nonumber
\end{eqnarray}
As above, $(1 + \E\llvert Y_{\rho,k}\rrvert )/2$ is the
probability of estimating $\sigma_\rho$ correctly given $\tau
_{L_k(\rho)}$.
In particular, the analogue of Theorem~\ref{teo:robust-tree-intro} for
$d$-ary trees
may be written as follows.
%
%
\begin{theorem}\label{teo:d-ary}
There exists a constant $C$ such that if $\theta^2 d > C$ and $\delta
< \frac{1}2$ then
\[
\lim_{k\to\infty} \E\llvert X_{\rho,k} \rrvert= \lim
_{k\to\infty} \E\llvert Y_{\rho,k}\rrvert.
\]
\end{theorem}

Our main method for proving Theorem~\ref{teo:d-ary} (and also
Theorem~\ref{teo:robust-tree-intro}) is by studying certain
recursions. Indeed,
Bayes' rule implies the following recurrence for $X$ (see, e.g.,~\cite
{Sly11}):
%
%
\begin{eqnarray}
\label{eq:mag-recurrence} X_{u,k} &=& \frac{\prod_{i \in\C(u)} (1 +
\theta X_{ui,k-1}) - \prod_{i \in
\C(u)} (1 - \theta X_{ui,k-1})}{
\prod_{i \in\C(u)} (1 + \theta X_{ui,k-1}) + \prod_{i \in\C(u)}
(1 - \theta X_{ui,k-1})}.
\end{eqnarray}
The same reasoning that gives~\eqref{eq:mag-recurrence} also shows
that~\eqref{eq:mag-recurrence} also holds when every
instance of $X$ is replaced by $Y$. Since our entire analysis is based on
the recurrence~\eqref{eq:mag-recurrence}, the only meaningful (for us)
difference
between $X$ and $Y$ is that their initial conditions are different:
$X_{u,0} = \pm1$ while $Y_{u,0} = \pm(1-2\delta)$. In fact, we will
see later that Theorem~\ref{teo:d-ary} also holds for some more general
estimators $Y$ satisfying~\eqref{eq:mag-recurrence}.

\subsection{The simple majority method}\label{sec:majority}
Our first step in proving Theorem~\ref{teo:d-ary} is to show that
when $\theta^2 d$ is large, then both the exact reconstruction and the
noisy reconstruction do quite well. While it is possible to do so by
studying the recursion~\eqref{eq:mag-recurrence}, such an analysis is
actually quite delicate. Instead, we will show this by studying a completely
different estimator: the one which is equal to the most common label
among $\sigma_{L_k(\rho)}$. This estimator is easy to analyze, and it performs
quite well; since the estimator based on the sign of $X_{\rho,k}$ is optimal,
it performs even better. The study of the simple majority estimator is
quite old,
having essentially appeared in the paper of Kesten and
Stigum~\cite{KestenStigum66}; however, we include most of the details
for the
sake of completeness.

Suppose $d \theta^2 > 1$.
Define $S_{u,k} = \sum_{v \in L_k(u)} \sigma_v$
and set $\widetilde S_{u,k} = \sum_{v \in L_k(u)} \tau_v$. We will
attempt to
estimate $\sigma_\rho$ by $\sgn(S_{\rho,k})$
or $\sgn(\widetilde S_{\rho,k})$; when $\theta^2 d$ is large
enough, these estimators turn out to perform quite well.
We will show this by calculating the first two moments of $S_{u,k}$
and $\widetilde S_{u,k}$; we write $\E^+$ and $\Var^+$ for the conditional
expectation and conditional variance given $\sigma_\rho= +$.
The first moments are trivial, and we omit the proof.

%
\begin{lemma}\label{lem:majority-first-moment}
\begin{eqnarray*}
\E^+ S_{\rho,k} &=& \theta^k d^k,
\\
\E^+ \widetilde S_{\rho,k} &=& (1-2\delta) \theta^k
d^k.
\end{eqnarray*}
\end{lemma}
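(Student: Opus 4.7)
The plan is straightforward: I will compute $\E^+[\sigma_v]$ for each leaf $v \in L_k(\rho)$ separately and then sum. Fix such a leaf $v$, and let $\rho = v_0, v_1, \dots, v_k = v$ denote the unique path from $\rho$ to $v$. The broadcast process is Markov along this path, and the transition rule $\Pr(\sigma_{v_i} = \sigma_{v_{i-1}}) = (1+\theta)/2$ is equivalent to $\E[\sigma_{v_i} \mid \sigma_{v_{i-1}}] = \theta\,\sigma_{v_{i-1}}$. Iterating this identity $k$ times via the tower property, I get
$$\E^+[\sigma_v] = \theta^k.$$
Since the $d$-ary tree has $|L_k(\rho)| = d^k$ vertices at level $k$, linearity of expectation immediately yields $\E^+ S_{\rho,k} = \theta^k d^k$, which is the first identity.

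For the noisy sum, I will use that $\tau_v$ is obtained from $\sigma_v$ by an independent flip: $\tau_v = \sigma_v$ with probability $1-\delta$ and $\tau_v = -\sigma_v$ with probability $\delta$, independently across leaves and independently of $\sigma$. Consequently $\E[\tau_v \mid \sigma] = (1-2\delta)\,\sigma_v$, and the tower property combined with the first part gives
$$\E^+[\tau_v] = (1-2\delta)\,\E^+[\sigma_v] = (1-2\delta)\,\theta^k.$$
Summing over the $d^k$ leaves produces $\E^+ \tilde S_{\rho,k} = (1-2\delta)\,\theta^k d^k$, which is the second identity.

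There is essentially no obstacle: the entire argument is a direct application of the tower property and the defining transition rule of the broadcast process, and crucially it never requires tracking correlations between distinct leaves. All the real work in the majority-estimator analysis will show up in the companion second-moment bound, where one must sum the pairwise covariances $\Cov^+(\sigma_u, \sigma_v)$ over $L_k(\rho) \times L_k(\rho)$ and carefully account for pairs whose most recent common ancestor lies deep in the tree; that is the place where the hypothesis $d\theta^2 > 1$ (and ultimately the Kesten--Stigum condition) will enter the picture.
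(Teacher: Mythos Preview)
Your proof is correct and is exactly the standard argument; the paper omits the proof as trivial, and what you wrote is the natural way to fill it in.
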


The second moment calculation uses the recursive structure of the tree.
The argument is not new, but we include it for completeness.
%
%
\begin{lemma}\label{lem:majority-second-moment}
\begin{eqnarray*}
\Var^+ S_{\rho,k} &=& 4 \eta(1-\eta) d^{k} \frac{(\theta^2 d)^k -
1}{\theta^2 d - 1},
\\
\Var^+ \widetilde S_{\rho,k} &=& 4d^k \delta(1-\delta) + 4 (1-2
\delta)^2 \eta(1-\eta) d^{k} \frac{(\theta^2 d)^k - 1}{\theta^2 d - 1}.
\end{eqnarray*}
\end{lemma}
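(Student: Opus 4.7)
The two identities have essentially the same derivation, so I would handle them uniformly via a one-step recursion exploiting the self-similarity of the $d$-ary tree. Write $S_{\rho,k}=\sum_{i=1}^{d}S_{i,k-1}$, where $S_{i,k-1}$ is the analogous sum over the subtree rooted at the $i$-th child of $\rho$. Conditionally on $\sigma_\rho=+$, the children's spins $\sigma_1,\dots,\sigma_d$ are i.i.d.\ with $\Pr(\sigma_i=+)=(1+\theta)/2$, and conditionally on $(\sigma_1,\dots,\sigma_d)$ the $d$ subtree sums are independent. So by independence of the subtrees,
\[
\Var^+ S_{\rho,k}\;=\;d\,\Var^+ S_{1,k-1},
\]
and by the law of total variance applied to $S_{1,k-1}$ with respect to $\sigma_1$,
\[
\Var^+ S_{1,k-1}\;=\;\E^+\!\bigl[\Var(S_{1,k-1}\mid\sigma_1)\bigr]\;+\;\Var^+\!\bigl[\E(S_{1,k-1}\mid\sigma_1)\bigr].
\]

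The two pieces are then read off using the symmetries of the model. By the isomorphism of the subtree rooted at child $1$ with the whole tree of depth $k-1$, together with the $\pm$ symmetry of the broadcast process, $\Var(S_{1,k-1}\mid\sigma_1=\pm)=\Var^+ S_{\rho,k-1}$, so the first piece equals $\Var^+ S_{\rho,k-1}$. For the second piece, $\E(S_{1,k-1}\mid\sigma_1)=\sigma_1\,\theta^{k-1}d^{k-1}$ and $\Var^+\sigma_1=1-\theta^2=4\eta(1-\eta)$, giving $4\eta(1-\eta)\,\theta^{2(k-1)}d^{2(k-1)}$. Letting $V_k:=\Var^+ S_{\rho,k}$ I therefore get the recurrence
\[
V_k\;=\;d\,V_{k-1}\;+\;4\eta(1-\eta)\,(\theta^2 d)^{k-1}d^{k},\qquad V_0=0,
\]
which unfolds (geometric sum) to the claimed formula $4\eta(1-\eta)d^k\,\bigl((\theta^2 d)^k-1\bigr)/(\theta^2 d-1)$.

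For $\tilde S_{\rho,k}$ the argument is identical, with the two small modifications coming from the leaf noise. First, the conditional expectation becomes $\E(\tilde S_{1,k-1}\mid\sigma_1)=(1-2\delta)\sigma_1\,\theta^{k-1}d^{k-1}$, which just inserts a factor of $(1-2\delta)^2$ into the ``new noise" term of the recurrence. Second, the initial condition changes: since given $\sigma_\rho=+$ we have $\tau_\rho=\pm1$ with probabilities $1-\delta,\delta$, the base case is $\tilde V_0=\Var^+\tau_\rho=4\delta(1-\delta)$ rather than $0$. The resulting recurrence
\[
\tilde V_k\;=\;d\,\tilde V_{k-1}\;+\;4(1-2\delta)^2\eta(1-\eta)\,(\theta^2 d)^{k-1}d^{k},\qquad \tilde V_0=4\delta(1-\delta),
\]
has a homogeneous part that contributes $4d^k\delta(1-\delta)$ and a particular part that reproduces the $S$-formula with an extra $(1-2\delta)^2$, yielding exactly the stated identity.

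\textbf{Main obstacle.} There is no real obstacle: the computation is a textbook law-of-total-variance recursion made clean by tree self-similarity plus $\pm$-symmetry, and the authors even note it is essentially the Kesten--Stigum calculation. The only point that requires a little care is separating noise-induced and broadcast-induced contributions in $\tilde V_k$, which is handled automatically by placing all the leaf noise into the initial condition $\tilde V_0=4\delta(1-\delta)$ and keeping the recurrence structurally identical to that for $V_k$.
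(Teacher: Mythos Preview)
Your argument is correct. For $\Var^+ S_{\rho,k}$ it is essentially identical to the paper's: both derive the recursion $V_k = dV_{k-1} + 4\eta(1-\eta)(\theta d)^{2(k-1)}d$ via the law of total variance over the first level and solve it. The only cosmetic difference is that you first pull out the factor $d$ from the i.i.d.\ sum and then condition a single subtree on $\sigma_1$, whereas the paper conditions the whole sum on $(\sigma_1,\dots,\sigma_d)$ at once; these are the same computation.

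For $\Var^+ \tilde S_{\rho,k}$ you take a slightly different route. You re-run the same tree recursion with the leaf noise absorbed into the initial condition $\tilde V_0=4\delta(1-\delta)$ and an extra $(1-2\delta)^2$ in the inhomogeneous term. The paper instead conditions $\tilde S_{\rho,k}$ on $S_{\rho,k}$ in one shot, observing that $\E(\tilde S_k\mid S_k)=(1-2\delta)S_k$ and $\Var(\tilde S_k\mid S_k)=4d^k\delta(1-\delta)$, which immediately gives $\tilde V_k = 4d^k\delta(1-\delta)+(1-2\delta)^2 V_k$ without any new recursion. Your approach has the virtue of uniformity (both identities via the same recursion), while the paper's is a bit shorter since it recycles the already-computed $V_k$. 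Both are fine.
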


\begin{pf}
We decompose the variance of $S_k$ by conditioning on the first level
of the tree:
%
%
\begin{equation}
\label{eq:variance-decomp} \Var^+ S_{\rho,k} = \E\Var^+(S_{\rho,k} \mid
\sigma_1, \dots, \sigma_d) + \Var^+ \E(S_{\rho,k}
\mid\sigma_1, \dots, \sigma_d).
\end{equation}
Now, $S_{\rho,k} = \sum_{u \in L_1} S_{u,k-1}$, and $S_{u,k-1}$
are i.i.d. under $\Pr^+$. Thus, the first term of~\eqref{eq:variance-decomp}
decomposes into a sum of variances:
\[
\E\Var^+(S_{\rho,k} \mid\sigma_1, \dots,
\sigma_d) = \sum_{u \in L_1} \E
\Var^+(S_{u,k-1} \mid\sigma_u) = d \Var^+(S_{\rho,k-1}).
\]
For the second term of~\eqref{eq:variance-decomp}, note that (by
Lemma~\ref{lem:majority-first-moment}),
$\E(S_{u,k-1} \mid\sigma_u)$ is $(\theta d)^{k-1}$ with probability
$1-\eta$ and $-(\theta d)^{k-1}$ otherwise. Since
$\E(S_{u,k-1} \mid\sigma_u)$ are independent as $u$ varies, we have
\[
\Var^+ \E(S_{\rho,k} \mid\sigma_1, \dots,
\sigma_d) = 4 d \eta(1-\eta) (\theta d)^{2k-2}.
\]
Plugging this back into~\eqref{eq:variance-decomp}, we get the recursion
\[
\Var^+ S_{\rho,k} = d \Var^+ S_{\rho,k-1} + 4 d \eta(1-\eta) (\theta
d)^{2k-2}.
\]
Since $\Var^+ S_{\rho,0} = 0$,
we solve this recursion to obtain
%
%
\begin{eqnarray}
\Var^+ S_{\rho,k} \label{eq:var-S} &=& d\sum_{\ell=1}^k
4 \eta(1-\eta) (\theta d)^{2\ell-2} d^{k-\ell}
\nonumber\\[-8pt]\\[-8pt]\nonumber
&=& 4 \eta(1-\eta) d^{k} \sum_{\ell=0}^{k-1}
\bigl(\theta^2 d \bigr)^{\ell}
= 4 \eta(1-\eta) d^{k} \frac{(\theta^2 d)^k - 1}{\theta^2 d - 1}.
\end{eqnarray}

To\vspace*{1pt} compute $\Var^+ \widetilde S_{\rho,k}$, we condition on $S_{\rho,k}$:
conditioned on $S_{\rho,k}$, $\widetilde S_{\rho,k}$ is a sum of $d^k$
i.i.d. terms,
of which $(d^k + S_{\rho,k}) / 2$ have mean $1-2\delta$,
$(d^k - S_{\rho,k})/2$ have mean $2\delta-1$, and
all have variance $4\delta(1-\delta)$. Hence,
$\E(\widetilde S_k \mid S_k) = (1-2\delta) S_k$ and
$\Var(\widetilde S_k \mid S_k) = 4 d^k \delta(1-\delta)$. By the decomposition
of variance,
\begin{eqnarray*}
\Var^+(\widetilde S_k) &=& \E^+ \bigl(4 d^k \delta(1-\delta)
\bigr) + \Var^+ \bigl((1-2\delta) S_k \bigr)
\\
&=& 4 d^k \delta(1-\delta) + 4 (1-2\delta)^2 \eta(1-\eta)
d^{k} \frac{(\theta^2 d)^k - 1}{\theta^2 d - 1},
\end{eqnarray*}
where the last equality follows from~\eqref{eq:var-S}
and the fact that $\Var(aX) = a^2 \Var(X)$.
\end{pf}

Taking $k \to\infty$ in Lemmas~\ref{lem:majority-first-moment}
and~\ref{lem:majority-second-moment}, we see that
if $\theta^2 d > 1$ then
\[
\left.%
\begin{array} {l}
\displaystyle\frac{\Var^+ S_k}{(\E^+ S_k)^2}
\\[12pt]
\displaystyle\frac{\Var^+ \widetilde S_k} {(\E^+ \widetilde S_k)^2}
\end{array}
\right\} \stackrel{k\to\infty} {\to}
\frac{4\eta(1-\eta)}{\theta^2 d}.
\]
By Chebyshev's inequality, %
\[
\liminf_{k \to\infty} \Pr^+(S_k > 0) \ge1 -
\frac{4\eta(1-\eta)}{\theta^2 d}.
\]
In other words, the estimators
$\sgn(S_k)$ and $\sgn(\widetilde S_k)$ succeed with probability at least
$1 - \frac{4\eta(1-\eta)}{\theta^2 d^2}$ as $k \to\infty$.
Now, $\sgn(Y_{\rho,k})$ is the optimal estimator of $\sigma_\rho$
given~$\tau_{L_k}$, and
its success probability is exactly $(1 + \E\llvert Y_{\rho,k}\rrvert
)/2$. Hence,\vspace*{2pt}
this quantity must be
larger than the success probability of $\sgn(\widetilde S_k)$
[and similarly for $X$ and $\sgn(S_k)$]. Putting this together,
we arrive at the following estimates: if $\theta^2 d > 1$ and $k \ge
K(\delta)$ then
%
%
\begin{eqnarray}
\E\llvert X_{\rho,k} \rrvert&\ge&1 - \frac{10 \eta(1-\eta)}{\theta^2 d},
\label{eq:large-abs-mag-X}
\\
\E\llvert Y_{\rho,k}\rrvert&\ge&1 - \frac{10 \eta(1-\eta)}{\theta^2 d}.
\label{eq:large-abs-mag-Y}
\end{eqnarray}
Now, given that $\sigma_\rho= +$, the optimal estimator makes a mistake
whenever \mbox{$X_{\rho,k} < 0$}; hence,
$\Pr^+(X_{\rho,k} < 0) \le(1 - \E\llvert X_{\rho,k} \rrvert )/2$.
Since $X_{u,k}
\ge-1$, this implies
\[
\E^+ X_{\rho,k} \ge\E^+ \llvert X_{\rho,k} \rrvert- 2
\Pr^+(X_{\rho,k} < 0) \ge1 - \frac{C \eta(1-\eta)}{\theta^2 d}.
\]
We will use this fact repeatedly, so let us summarize in a lemma.
%
%
\begin{lemma}\label{lem:large-expected-mag}
There is a constant $C$ such that
if $\theta^2 d > 1$ and $k \ge K(\delta)$ then
\begin{eqnarray*}
\E^+ X_{\rho,k} &\ge&1 - \frac{C \eta(1-\eta)}{\theta^2 d},
\\
\E^+ Y_{\rho,k} &\ge&1 - \frac{C \eta(1-\eta)}{\theta^2 d}.
\end{eqnarray*}
\end{lemma}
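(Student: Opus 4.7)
The plan is to deduce the lemma directly from the already-established bounds $\E|X_{\rho,k}|,\E|Y_{\rho,k}| \ge 1 - \frac{10\eta(1-\eta)}{\theta^2 d}$ in \eqref{eq:large-abs-mag-X}--\eqref{eq:large-abs-mag-Y}, combined with a $\pm$-symmetry argument and the fact that the magnetizations live in $[-1,1]$. Since the recursion and initial data for $Y$ are just the $\delta$-shifted version of those for $X$, the argument for the two inequalities is verbatim identical, so I focus on $X$.

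First I would invoke the symmetry of the broadcast process under swapping $+$ and $-$: conditional on $\sigma_\rho=+$, the random variable $X_{\rho,k}$ has the same distribution as $-X_{\rho,k}$ conditional on $\sigma_\rho=-$. Two consequences are $\E^+|X_{\rho,k}| = \E|X_{\rho,k}|$ and, since $\sgn(X_{\rho,k})$ is the optimal estimator with overall success probability $(1+\E|X_{\rho,k}|)/2$, the conditional error probability satisfies
\[
\Pr{}^+(X_{\rho,k}<0) \;\le\; \frac{1-\E|X_{\rho,k}|}{2}.
\]

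Next, using $X_{\rho,k}\in[-1,1]$, I would write the pointwise inequality
\[
X_{\rho,k} \;\ge\; |X_{\rho,k}|\cdot\1{X_{\rho,k}\ge 0} \;-\; \1{X_{\rho,k}<0} \;\ge\; |X_{\rho,k}| - 2\cdot\1{X_{\rho,k}<0},
\]
take expectations under $\Pr^+$, and plug in the two displays above together with \eqref{eq:large-abs-mag-X} to obtain
\[
\E^+ X_{\rho,k} \;\ge\; \E|X_{\rho,k}| - (1-\E|X_{\rho,k}|) \;=\; 2\E|X_{\rho,k}| - 1 \;\ge\; 1 - \frac{20\,\eta(1-\eta)}{\theta^2 d},
\]
giving the first bound with $C=20$. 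Repeating this argument with $Y_{\rho,k}$ in place of $X_{\rho,k}$ and \eqref{eq:large-abs-mag-Y} in place of \eqref{eq:large-abs-mag-X} (noting that the noisy broadcast process retains the same $\pm$-symmetry since $\delta$ acts symmetrically on both labels) yields the second bound. The hypothesis $k\ge K(\delta)$ is only used to inherit the validity of \eqref{eq:large-abs-mag-Y}.

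There is essentially no obstacle here; the lemma is a routine packaging of the majority-estimator bounds from Lemmas \ref{lem:majority-first-moment}--\ref{lem:majority-second-moment} together with symmetry. The only point requiring a moment of care is passing between conditional and unconditional expectations/probabilities, which is exactly what the $\pm$-symmetry of the model supplies.
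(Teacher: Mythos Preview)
Your proposal is correct and essentially identical to the paper's own argument: the paper also bounds $\Pr^+(X_{\rho,k}<0)\le (1-\E|X_{\rho,k}|)/2$ via the optimality of $\sgn(X_{\rho,k})$, then uses $X_{\rho,k}\ge -1$ to get $\E^+ X_{\rho,k}\ge \E^+|X_{\rho,k}|-2\Pr^+(X_{\rho,k}<0)$ and plugs in~\eqref{eq:large-abs-mag-X}--\eqref{eq:large-abs-mag-Y}. You are just slightly more explicit about the $\pm$-symmetry step.
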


By Markov's inequality, we find that $X_{u,k}$ is large with high probability:


%
\begin{lemma}\label{lem:large-mag}
There is a constant $C$ such that for all $k \ge K(\delta)$ and all $t
> 0$
\begin{eqnarray*}
\Pr\biggl( X_{u,k} \ge1 - t\frac{\eta}{\theta^2 d} \Bigm|
\sigma_u = + \biggr) &\ge&1 - C t^{-1},
\\
\Pr\biggl(Y_{u,k} \ge1 - t\frac{\eta}{\theta^2 d} \Bigm|\sigma
_u = + \biggr) &\ge&1 - C t^{-1}.
\end{eqnarray*}
\end{lemma}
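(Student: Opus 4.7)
The plan is a direct application of Markov's inequality to the non-negative random variables $1 - X_{u,k}$ and $1 - Y_{u,k}$, leveraging the expectation bound already established in Lemma~\ref{lem:large-expected-mag}.

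First, I observe that $X_{u,k} \in [-1,1]$ and $Y_{u,k} \in [-1,1]$, so $1 - X_{u,k}$ and $1 - Y_{u,k}$ are non-negative. By Lemma~\ref{lem:large-expected-mag}, for all $k \ge K(\delta)$ we have
\[
\E^+(1 - X_{\rho,k}) = 1 - \E^+ X_{\rho,k} \le \frac{C\eta(1-\eta)}{\theta^2 d} \le \frac{C\eta}{\theta^2 d},
\]
and analogously for $Y$. Here I use $1 - \eta \le 1$ to simplify the bound into the form that appears in the target inequality.

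Second, I apply Markov's inequality: for any $t > 0$,
\[
\Pr^+\!\left(1 - X_{u,k} \ge t \cdot \frac{\eta}{\theta^2 d}\right) \le \frac{\E^+(1 - X_{u,k})}{t \eta/(\theta^2 d)} \le \frac{C\eta/(\theta^2 d)}{t \eta/(\theta^2 d)} = \frac{C}{t}.
\]
Taking complements yields the claimed bound on $X_{u,k}$, and the identical argument applied to $1 - Y_{u,k}$ handles the second inequality.

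There is no real obstacle here; the lemma is essentially a corollary of Lemma~\ref{lem:large-expected-mag} together with the boundedness of the magnetizations. The only mild point to note is that the stated bound uses $\eta/(\theta^2 d)$ rather than $\eta(1-\eta)/(\theta^2 d)$, but the factor $1 - \eta$ is absorbed for free into the constant $C$.
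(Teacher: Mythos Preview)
Your argument is correct and is exactly what the paper intends: it states the lemma immediately after Lemma~\ref{lem:large-expected-mag} with the remark ``By Markov's inequality, we find that $X_{u,k}$ is large with high probability,'' which is precisely your application of Markov to the non-negative variable $1 - X_{u,k}$ (and $1 - Y_{u,k}$). The absorption of the factor $1-\eta$ into the constant $C$ is also the implicit step taken in the paper.
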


As we will see, Lemma~\ref{lem:large-expected-mag}
and the recursion~\eqref{eq:mag-recurrence} are really the only properties
of $Y$ that we will use. Hence, from now on $Y_{u,k}$ need not be defined
by~\eqref{eq:Y-def}. Rather, we will make the following assumptions on
$Y_{u,k}$.

%
\begin{assumption}\label{ass:Y}
There is a $K = K(\delta)$ such that for all $k \ge K$, the following
hold:
\begin{longlist}[3.]
\item[1.]
$
Y_{u,k+1} = \frac
{\prod_{i \in\C(u)} (1 + \theta Y_{ui,k}) - \prod_{i \in\C(u)} (1
- \theta Y_{ui,k})}{
\prod_{i \in\C(u)} (1 + \theta Y_{ui,k}) + \prod_{i \in\C(u)} (1
- \theta Y_{ui,k})}
$.

\item[2.] The distribution of $Y_{u,k}$ given $\sigma_u = +$ is equal
to the distribution of $-Y_{u,k}$ given $\sigma_u = -$.

\item[3.] $\E^+ Y_{\rho,k} \ge1 - \frac{C \eta(1-\eta)}{\theta^2 d}
$
for some constant $C$.
\end{longlist}
\end{assumption}

We will prove Theorem~\ref{teo:d-ary} under Assumption~\ref{ass:Y}.
Note that part 2 above immediately implies
\[
\E(Y_{ui,k} \mid\sigma_u = +) = \theta\E(Y_{ui,k}
\mid\sigma_{ui} = +).
\]
Also, part 3 implies that Lemma~\ref{lem:large-mag} holds for $Y$.

\subsection{The recursion for small \texorpdfstring{$\theta$}{$theta$}} \label{sec:small-theta}

Our proof of Theorem~\ref{teo:d-ary} proceeds in two cases, with two different
analyses. In the first case, we suppose that $\theta$ is small (i.e., smaller
than a fixed, small constant). In this case, we proceed by Taylor-expanding
the recursion~\eqref{eq:mag-recurrence} in $\theta$.
For the rest of this section, we will assume that $X$ and $Y$ satisfy
parts 1 and 2 of Assumption~\ref{ass:Y}, and that $x_k, y_k \ge5/6$
for $k \ge K(\delta)$.
This restriction will allow us to reuse most of the argument in
the Galton--Watson case (where part 3 of Assumption~\ref{ass:Y} fails
to hold, but we nevertheless have $x_k, y_k \ge5/6$).

%
\begin{proposition}\label{prop:large-d-recursion}
There are absolute constants $C$ and $\theta^* > 0$ such that
if $d \theta^2 \ge C$ and $\theta\le\theta^*$ then for all
$k \ge K(\theta, d, \delta)$,
\[
\E(X_{\rho,k+1} - Y_{\rho,k+1})^2 \le\tfrac{1}2
\E(X_{\rho,k} - Y_{\rho,k})^2.
\]
\end{proposition}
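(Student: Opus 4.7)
My plan is to exploit the identity $F(x_1,\dots,x_d)=\tanh\bigl(\sum_i\operatorname{arctanh}(\theta x_i)\bigr)$, which rewrites the BP recursion as $X_{\rho,k+1}=\tanh A$ and $Y_{\rho,k+1}=\tanh B$, where $A=\sum_{i\in\C(\rho)}\operatorname{arctanh}(\theta X_{i,k})$ and $B=\sum_{i\in\C(\rho)}\operatorname{arctanh}(\theta Y_{i,k})$. The contraction will come from two competing effects: (i) for small $\theta$, $\operatorname{arctanh}(\theta z)=\theta z(1+O(\theta^2))$ for $|z|\le 1$, so sums of $\Delta_i:=\operatorname{arctanh}(\theta X_{i,k})-\operatorname{arctanh}(\theta Y_{i,k})$ are well controlled; (ii) when $d\theta^2$ is large, $A$ and $B$ are typically of order $d\theta^2$, so $\tanh$ is extremely flat there and contracts differences exponentially.

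I would first establish the concentration of $A$ and $B$ around $d\theta^2$. Using part 2 of Assumption~\ref{ass:Y} and the bound $\operatorname{arctanh}(\theta z)\ge \theta z$, $\E^+A\ge (1-2\eta)d\,\E[\operatorname{arctanh}(\theta X_{1,k})\mid\sigma_1=+]\ge \tfrac{5}{6}d\theta^2$; and since each summand lies in $[-2\theta,2\theta]$, Hoeffding gives $\P^+\{A<\tfrac12 d\theta^2\}\le e^{-cd\theta^2}$, and similarly for $B$. On the good event $E=\{A,B\ge \tfrac12 d\theta^2\}$, every $t$ on the segment between $A$ and $B$ satisfies $\operatorname{sech}^2(t)\le 4e^{-cd\theta^2}$, so
\[
(X_{\rho,k+1}-Y_{\rho,k+1})^2\le (A-B)^2\cdot 16\,e^{-2cd\theta^2}.
\]

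Next I would control $\E^+(A-B)^2$. The $\Delta_i$ are conditionally independent across children under $\P^+$, and $|\Delta_i|\le 2\theta|D_{i,k}|$. Using the symmetry in Assumption~\ref{ass:Y}, $|\E^+\Delta_i|\le 2\theta^2\sqrt{\E D_{\rho,k}^2}$ and $\E^+\Delta_i^2\le 4\theta^2\E D_{\rho,k}^2$, giving
\[
\E^+(A-B)^2\le 4d\theta^2(1+d\theta^2)\,\E D_{\rho,k}^2.
\]
Combined with the Lipschitz bound on $E$, this yields $\E^+[D_{\rho,k+1}^2\mathbf{1}_E]\le C(d\theta^2)^2 e^{-cd\theta^2}\,\E D_{\rho,k}^2$, which is at most $\tfrac14\E D_{\rho,k}^2$ once $d\theta^2\ge C$ is large enough.

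The hard part will be handling the bad event $E^c$: naively pairing $\P^+(E^c)\le e^{-cd\theta^2}$ with the crude bound $(X_{\rho,k+1}-Y_{\rho,k+1})^2\le(A-B)^2$ via Cauchy--Schwarz produces an estimate that scales like $\sqrt{\E D_{\rho,k}^2}\cdot e^{-c'd\theta^2}$ instead of $\E D_{\rho,k}^2$, which is useless once $\E D_{\rho,k}^2$ becomes very small. To absorb this into $\tfrac14\E D_{\rho,k}^2$, I would use a higher-moment H\"older inequality, estimating $\E^+(A-B)^{2p}$ by using the independence of the $\Delta_i$ across children together with the pointwise bound $|\Delta_i|\le 4\theta$, and then pairing with $\P^+(E^c)^{1-1/p}$; pushing $p$ large exchanges a factor of $e^{-c'd\theta^2}$ against a polynomial in $d\theta^2$ and a power of $\E D_{\rho,k}^2$, and for $d\theta^2\ge C$ sufficiently large (and $k\ge K(\theta,d,\delta)$ large enough that the regime of Lemma~\ref{lem:large-mag} applies), this residual is $\le \tfrac14\E D_{\rho,k}^2$, yielding the claimed contraction by $\tfrac12$.
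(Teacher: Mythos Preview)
Your overall strategy via the $\tanh/\operatorname{arctanh}$ representation is natural, and the good-event analysis is fine: on $E=\{A,B\ge\tfrac12 d\theta^2\}$ you correctly get
\[
\E^+\bigl[(X_{\rho,k+1}-Y_{\rho,k+1})^2\mathbf{1}_E\bigr]\le C(d\theta^2)^2 e^{-cd\theta^2}\,\E D_{\rho,k}^2.
\]
The gap is exactly where you flag it yourself: the bad-event term. Your proposed H\"older argument cannot yield a bound that is \emph{linear} in $\E D_{\rho,k}^2$. Concretely, with $|\Delta_i|\le 4\theta$ one has $\E^+|\Delta_i|^{2p}\le (4\theta)^{2p-2}\E^+\Delta_i^2\le C\theta^{2p}\E D_{\rho,k}^2$, and Rosenthal's inequality then contains a term $d\,\E^+|\Delta_1|^{2p}$ in $\E^+(A-B)^{2p}$. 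After taking the $1/p$-th power and multiplying by $\Pr^+(E^c)^{1-1/p}$, this term contributes
\[
d^{1/p}\theta^{2}\,(\E D_{\rho,k}^2)^{1/p}\,e^{-c(1-1/p)d\theta^2}.
\]
For this to be at most $\tfrac14\E D_{\rho,k}^2$ you would need
\[
d^{1/p}\theta^{2}\,e^{-c(1-1/p)d\theta^2}\le \tfrac14\,(\E D_{\rho,k}^2)^{1-1/p}.
\]
The left side is a fixed positive number (depending on $d,\theta,p$), while the right side tends to $0$ as $k\to\infty$ since $\E D_{\rho,k}^2\to 0$ (which is precisely what the proposition is establishing). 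So for any fixed $p$ the inequality eventually fails; letting $p$ grow with $k$ does not help because the Rosenthal constant $C_p$ grows super-exponentially. In short, splitting on $\{A,B\ge\tfrac12 d\theta^2\}$ produces a residual that is not controllable by $\E D_{\rho,k}^2$ alone.

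The paper avoids this difficulty entirely by never introducing a good/bad decomposition. Instead it uses the elementary inequality $\bigl|\tfrac{1}{1+x}-\tfrac{1}{1+y}\bigr|\le \tfrac1p|x^p-y^p|$ (Lemma~\ref{lem:x^p}) with $p=1/4$ to bound $|X_{\rho,k+1}-Y_{\rho,k+1}|$ by a difference of products $\prod_i A_i-\prod_i B_i$ of i.i.d.\ terms, and then expands the second moment of this difference exactly (Lemma~\ref{lem:expansion-of-square}). Both pieces of that expansion---the $(\E A^2-\E B^2)^2$ term and the $\E(A-B)^2$ term---are shown to be bounded by $C\theta^4\E D_{\rho,k}^2$ and $C\theta^2\E D_{\rho,k}^2$ respectively (Lemmas~\ref{lem:A^2-B^2} and~\eqref{eq:(A-B)^2}), and the prefactor $m^{d-1}\le e^{-\theta^2 d/5}$ comes from $\E A_i^2\le 1-\theta^2/5$ (Lemma~\ref{lem:exp-taylor-quotient}). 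This gives a bound of the form $C(d^2\theta^4+d\theta^2)e^{-\theta^2 d/5}\,\E D_{\rho,k}^2$ directly, which is the linear dependence on $\E D_{\rho,k}^2$ that your argument is missing.
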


Note\vspace*{1pt} that Proposition~\ref{prop:large-d-recursion} immediately implies
that if $d \theta^2 \ge C$ and $\theta\le\theta^*$ then
$\E(X_{\rho,k} - Y_{\rho,k})^2 \to0$ as $k \to\infty$, which implies
Theorem~\ref{teo:d-ary} in the case that $\theta\le\theta^*$.

In proving Proposition~\ref{prop:large-d-recursion}, the first step
is to replace the right-hand side of~\eqref{eq:mag-recurrence} with something
easier to work with; in particular, we would like to have something
without $X$ in the denominator. For this, we note that
\[
\frac{a - b}{a + b} = \frac{1 - b/a}{1 + b/a} = \frac{2}{1 + b/a} - 1.
\]
Hence, if $a = \prod_i (1 + \theta X_{ui,k})$, $b = \prod_i (1 -
\theta X_{ui,k})$,
and $a'$ and $b'$ are the same quantities with $Y$ replacing $X$, then
%
%
\begin{equation}
\label{eq:rearrange-recursion} \llvert X_{u,k+1} - Y_{u,k+1}\rrvert=
\biggl
\llvert\frac{a-b}{a+b} - \frac{a' - b'}{a'
+ b'} \biggr\rrvert= 2 \biggl\llvert
\frac{1}{1 + b/a} - \frac{1}{1 + b'/a'} \biggr\rrvert.
\end{equation}
Using Taylor's theorem, the right-hand side can be bounded in terms of
$\llvert (b/a)^p - (b'/a')^p\rrvert $ for some $0 < p < 1$ of our choice.

%
\begin{lemma}\label{lem:x^p}
For any $0 < p < 1$ and any $x, y \ge0$,
\[
\biggl\llvert\frac{1}{1+x} - \frac{1}{1+y} \biggr\rrvert\le
\frac{1}p \bigl\llvert x^p - y^p\bigr\rrvert.
\]
\end{lemma}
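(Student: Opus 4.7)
The plan is to reduce the inequality to a pointwise bound on derivatives and then integrate. Without loss of generality, assume $0 \le x \le y$. I would write both sides as integrals over $[x,y]$:
\[
\frac{1}{1+x} - \frac{1}{1+y} = \int_x^y \frac{dt}{(1+t)^2}, \qquad \frac{1}{p}(y^p - x^p) = \int_x^y t^{p-1}\, dt.
\]
Then the lemma follows immediately if one can establish the pointwise inequality
\[
\frac{1}{(1+t)^2} \le t^{p-1} \quad \text{for all } t > 0.
\]

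To verify this pointwise inequality, I would rearrange it as $t^{1-p} \le (1+t)^2$. Since $1-p \in (0,1)$, we have $t^{1-p} \le 1$ whenever $t \le 1$ and $t^{1-p} \le t$ whenever $t \ge 1$; in either case $t^{1-p} \le 1 + t \le (1+t)^2$. This gives the desired pointwise bound, and integrating from $x$ to $y$ yields the lemma.

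There is no real obstacle here; the only subtlety is the choice of the comparison function $t^{p-1}$, which is singular at $0$ precisely because $p - 1 < 0$. This singularity is exactly what makes the bound work near $0$, since the left-hand side $1/(1+t)^2$ is bounded there, while near infinity the inequality holds because $p-1 > -2$. Thus the restriction $p \in (0,1)$ enters in a natural way from the two endpoint behaviors.
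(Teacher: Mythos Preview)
Your proof is correct and is essentially the same as the paper's: both reduce via the fundamental theorem of calculus to the pointwise derivative bound $\frac{1}{(1+t)^2} \le t^{p-1}$, and both verify it by splitting into the cases $t \le 1$ and $t \ge 1$.
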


\begin{pf}
Let $f(x) = \frac{1}{1+x}$ and $g(x) = x^p$. By the fundamental
theorem of calculus, the proof\vspace*{1pt} would follow from the inequality
$\llvert f'(x)\rrvert \le p^{-1} g'(x)$. Now, $\llvert f'(x)\rrvert
= \frac{1}{(1+x)^2}$ and
$g'(x) = p x^{p-1}$. When $x \ge1$, we have
$\llvert f'(x)\rrvert \le x^{-2} \le x^{p-1}$, while if $x \le1$ then
$\llvert f'(x)\rrvert \le1 \le x^{p-1}$.
\end{pf}

As an immediate consequence of Lemma~\ref{lem:x^p} (for $p = 1/4$)
and~\eqref{eq:rearrange-recursion},
%
%
\begin{equation}
\label{eq:get-rid-of-denom} \llvert X_{u,k+1} - Y_{u,k+1}\rrvert\le8
\biggl
\llvert\biggl( \prod_i \frac{1 - \theta X_{ui,k}}{1 + \theta X_{ui,k}}
\biggr)^{1/4} - \biggl(\prod_i
\frac{1 - \theta Y_{ui,k}}{1 + \theta
Y_{ui,k}} \biggr)^{1/4} \biggr\rrvert.
\end{equation}

Next, we present a general bound on the second moment of differences of
products. Of course, we have in mind the example
$A_i = (\frac{1-\theta X_{ui,k}}{1 + \theta X_{ui,k}})^{1/4}$ and
similarly for
$B_i$ and $Y_i$.

%
\begin{lemma}\label{lem:expansion-of-square}
Let $(A_1, B_1), \dots, (A_d, B_d)$ be i.i.d. copies of
$(A, B)$. Then
\[
\E\Biggl(\prod_{i=1}^d A_i
- \prod_{i=1}^d B_i
\Biggr)^2 \le d m^{d-1} \bigl(\E A^2 - \E
B^2 \bigr)^2 + 2 dm^{d-1} \E(A -
B)^2,
\]
where $m = \max\{\E A^2, \E B^2\}$.
\end{lemma}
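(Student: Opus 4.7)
The plan is to exploit the independence implicit in the i.i.d.\ hypothesis to reduce the problem to a polynomial inequality in $\alpha := \E A^2$, $\beta := \E B^2$, and $\gamma := \E AB$. Direct expansion gives
\[
\E\Big(\prod_{i=1}^d A_i - \prod_{i=1}^d B_i\Big)^{\!2} = \alpha^d - 2\gamma^d + \beta^d,
\]
and Cauchy--Schwarz yields $|\gamma|\le\sqrt{\alpha\beta}\le m$, so every one-variable expectation is bounded by $m$ in absolute value. The target is to split this polynomial into one piece controlled by $(\alpha-\beta)^2$ and another controlled by $\E(A-B)^2$.

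To separate the two contributions cleanly, I would use the telescoping representation
\[
\prod_{i=1}^d A_i - \prod_{i=1}^d B_i = \sum_{i=1}^d T_i, \qquad T_i := A_1\cdots A_{i-1}(A_i-B_i)B_{i+1}\cdots B_d,
\]
square it, and take expectations, distinguishing diagonal from off-diagonal contributions. The diagonal part is immediate: by the i.i.d.\ structure $\E T_i^2 = \alpha^{i-1}\beta^{d-i}\E(A-B)^2\le m^{d-1}\E(A-B)^2$, and summing over $i$ produces $O(d m^{d-1})\cdot\E(A-B)^2$, matching the second term on the right of the lemma.

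The off-diagonal terms generate the $(\E A^2-\E B^2)^2$ contribution. For $i<j$, independence together with $\E[(A-B)A]=\alpha-\gamma$ and $\E[(A-B)B]=\gamma-\beta$ yields
\[
\E T_iT_j = \alpha^{i-1}(\alpha-\gamma)\gamma^{j-i-1}(\gamma-\beta)\beta^{d-j}.
\]
The key algebraic step is the identity
\[
(\alpha-\gamma)(\gamma-\beta) = \tfrac14\bigl((\alpha-\beta)^2 - (\E(A-B)^2)^2\bigr),
\]
obtained from the system $(\alpha-\gamma)+(\gamma-\beta)=\alpha-\beta$ and $(\alpha-\gamma)-(\gamma-\beta)=\E(A-B)^2$. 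This converts the product of mixed differences into exactly the two quantities that appear on the right of the lemma. Bounding $|\gamma|^{j-i-1}\le m^{j-i-1}$ collapses the geometric factors to a uniform $m^{d-2}$ per pair, and the auxiliary $(\E(A-B)^2)^2$ residual is absorbed into $\E(A-B)^2$ via the crude estimate $\E(A-B)^2\le 2(\alpha+\beta)\le 4m$, pushing the exponent up to $m^{d-1}$.

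The main obstacle is the sign of $(\alpha-\gamma)(\gamma-\beta)$: it is positive exactly when $\gamma$ lies between $\alpha$ and $\beta$, and in the opposite regime (which can occur when $\gamma$ is negative or very close to $\max(\alpha,\beta)$) the two factors partially cancel across the sum over $i<j$. Tracking this cancellation through the identity above, rather than passing to absolute values prematurely, is the crux; the remainder is polynomial bookkeeping under the uniform bounds $\alpha,\beta,|\gamma|\le m$.
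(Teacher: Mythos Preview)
Your telescoping decomposition is a valid alternative route and your algebra is correct (in particular the identity $(\alpha-\gamma)(\gamma-\beta)=\tfrac14\big((\alpha-\beta)^2-\epsilon^2\big)$), but the paper's argument is much shorter and sidesteps your ``main obstacle'' entirely. Having written the left side as $\alpha^d+\beta^d-2\gamma^d$ (your first display), the paper never goes back to the random variables: it just applies two Taylor estimates to $f(x)=x^d$. A second-order expansion gives $\alpha^d+\beta^d\le 2\big(\tfrac{\alpha+\beta}{2}\big)^d+\tfrac14(\alpha-\beta)^2\max f''$, producing the $(\E A^2-\E B^2)^2$ term, and a first-order expansion gives $\big(\tfrac{\alpha+\beta}{2}\big)^d-\gamma^d\le \tfrac{\epsilon}{2}\max |f'|$, producing the $\E(A-B)^2$ term. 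No diagonal/off-diagonal split, no sign tracking over $\binom{d}{2}$ cross terms. Your approach can be pushed through (the bad case $\gamma<0$ can be handled separately, since then $\epsilon>\alpha+\beta\ge m$ forces $\alpha^d+\beta^d-2\gamma^d\le 4m^d\le 2dm^{d-1}\epsilon$), but it is substantially more work for the same conclusion.

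One point worth flagging: the coefficient $dm^{d-1}$ on $(\E A^2-\E B^2)^2$ in the lemma as displayed is a typo. The paper's own proof produces $d^2m^{d-2}$ there (from $f''(x)=d(d-1)x^{d-2}$), and that is the form carried into the application. The printed bound is in fact false: for $A=1$, $B=\sqrt{1-h}$ deterministic with $h$ small and $d$ large, the left side is $\sim d^2h^2/4$ while the stated right side is $\sim\tfrac32 dh^2$. Your method, with $\binom{d}{2}$ off-diagonal pairs each bounded by $m^{d-2}$, naturally delivers the same $d^2m^{d-2}$ coefficient, so you would have arrived at the correct inequality anyway.
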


\begin{pf}
Let $\varepsilon= \E(A_i - B_i)^2$, so that
$\E A_i B_i = \frac{1}{2} (\E A_i^2 + \E B_i^2 - \varepsilon)$. Then
%
%
\begin{eqnarray}\label{eq:expansion-of-square}
\E\Biggl(\prod_{i=1}^d A_i
- \prod_{i=1}^d B_i
\Biggr)^2 &=& \E\prod_{i=1}^d
A_i^2 + \E\prod_{i=1}^d
B_i^2 - 2 \E\prod_{i=1}^d
A_i B_i
\nonumber
\\
&=& \bigl(\E A^2 \bigr)^d + \bigl(\E B^2
\bigr)^d - 2 \prod_{i=1}^d
\frac{\E A_i^2 + \E B_i^2 - \varepsilon}{2}
\\
&=& \bigl(\E A^2 \bigr)^d + \bigl(\E B^2
\bigr)^d - 2 \biggl(\frac{\E A^2 + \E B^2 - \varepsilon}{2} \biggr)^d.\nonumber
\end{eqnarray}
By a second-order Taylor expansion, any twice differentiable $f$
satisfies $f(x) + f(y) \le2 f((x+y)/2) + \frac{1}4 (x-y)^2 \max_z f''(z)$,
where the maximum ranges over $z$ between $x$ and $y$. Applying this for
$f(x) = x^d$ yields
\[
\bigl(\E A^2 \bigr)^d + \bigl(\E B^2
\bigr)^d \le d^2 m^{d-2} \bigl(\E A^2
- \E B^2 \bigr)^2 + 2 \biggl(\frac{\E A^2 + \E B^2}{2}
\biggr)^d.
\]
Hence,
\begin{eqnarray*}
\eqref{eq:expansion-of-square} &\le& d^2 m^{d-2} \bigl(\E
A^2 - \E B^2 \bigr)^2 + 2 \biggl(
\frac{\E A^2 + \E B^2}{2} \biggr)^d - 2 \biggl(\frac{\E A^2 + \E B^2 -
\varepsilon}{2}
\biggr)^d
\\
&\le& d^2 m^{d-2} \bigl(\E A^2 - \E
B^2 \bigr)^2 + 2 d m^{d-1} \varepsilon,
\end{eqnarray*}
where the second inequality follows from a first-order Taylor
expansion of the function $f(x) = x^d$ around $x = (\E A^2 + \E B^2)/2$.
\end{pf}

As we\vspace*{2pt} said before,
we will apply Lemma~\ref{lem:expansion-of-square} with
$A_i = \smash{ (\frac{1 - \theta X_{ui,k}}{1 + \theta
X_{ui,k}} )^{1/4}}$ and
$B_i = \smash{ (\frac{1 - \theta Y_{ui,k}}{1 + \theta
Y_{ui,k}} )^{1/4}}$.
To make the lemma useful, we will need to bound
$\E A_i^2$, $\E B_i^2$, and their difference.
First, we will bound $\E A_i^2$ and $\E B_i^2$. In other words,
we will bound
\[
\E\sqrt{\frac{1 - \theta X_{ui,k}}{1 + \theta X_{ui,k}}}
\]
and the same expression with $Y$ instead of $X$.

%
\begin{lemma}\label{lem:exp-taylor-quotient}
There is a constant $\theta^* > 0$ such that if
$x_k, y_k \ge5/6$ then
\begin{eqnarray*}
\E\bigl(A_i^2 \mid\sigma_u = + \bigr) &
\le&1 - \frac{\theta^2 x_k}{4},
\\
\E\bigl(B_i^2 \mid\sigma_u = + \bigr) &
\le&1 - \frac{\theta^2 y_k}{4}.
\end{eqnarray*}
\end{lemma}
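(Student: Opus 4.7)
The plan is to Taylor expand the function $h(t) := \sqrt{(1-t)/(1+t)}$ around $t = 0$ and average against the conditional law of the child's magnetization. Note that $A_i^2 = h(\theta X_{ui,k})$ and $B_i^2 = h(\theta Y_{ui,k})$; the function $h$ is real analytic on $(-1,1)$ with $h(0)=1$, $h'(0)=-1$, $h''(0)=1$.

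First, I condition on $\sigma_{ui}$. The Markov property of the broadcast gives $\Pr(\sigma_{ui} = + \mid \sigma_u = +) = (1+\theta)/2$, and the law of $X_{ui,k}$ depends on $\sigma_u$ only through $\sigma_{ui}$. Writing $Z$ for $X_{ui,k}$ conditional on $\sigma_{ui}=+$, Assumption~\ref{ass:Y}(2) says that $X_{ui,k}$ given $\sigma_{ui}=-$ is distributed as $-Z$, so
\[
\E\bigl(A_i^2 \mid \sigma_u = +\bigr)
= \E\,\phi(Z),\qquad
\phi(z) := \tfrac{1+\theta}{2}\,h(\theta z) + \tfrac{1-\theta}{2}\,h(-\theta z).
\]

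Second, I Taylor expand $\phi$. Splitting $h$ into its even and odd parts at $0$ and collecting, the contributions through $\theta^3$ combine into $1 - \theta^2 z + \theta^2 z^2/2$, while every remaining term has $\theta$-order at least $4$ and is controlled uniformly on $|z|\le 1$, $|\theta|\le 1/2$ since $h$ is smooth on $[-1/2,1/2]$. Hence there is an absolute constant $C$ with
\[
\phi(z) \;\le\; 1 - \theta^2 z + \tfrac{\theta^2 z^2}{2} + C\theta^4 \qquad (|z|\le 1,\ |\theta|\le 1/2).
\]
Taking expectations and using $|Z|\le 1$ (hence $\E Z^2 \le 1$) together with $\E Z = x_k$ yields
\[
\E\bigl(A_i^2 \mid \sigma_u = +\bigr) \;\le\; 1 - \theta^2 x_k + \tfrac{\theta^2}{2} + C\theta^4.
\]
The hypothesis $x_k \ge 5/6$ gives $3\theta^2 x_k/4 \ge 5\theta^2/8 > \theta^2/2$, and for $\theta \le \theta^* := \min(1/2,\,1/\sqrt{8C})$ we absorb the quartic remainder into the slack, yielding $\E(A_i^2 \mid \sigma_u = +) \le 1 - \theta^2 x_k/4$. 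The bound on $B_i^2$ is identical, using $Y$ in place of $X$, $y_k$ in place of $x_k$, and Assumption~\ref{ass:Y}(2) for $Y$ (together with $|Y_{ui,k}|\le 1$, which is preserved by the recursion in Assumption~\ref{ass:Y}(1)).

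The main (mild) obstacle is bookkeeping the sign of the Taylor remainder so as to obtain a genuine upper bound rather than just an approximate identity; the threshold $5/6$ and the smallness of $\theta^*$ are used precisely to absorb this error with room to spare. Notably, the crude bound $\E Z^2 \le 1$ is all that is required—the analysis does not exploit any additional concentration of $Z$ near $1$, which is fortunate since the same argument must also apply to $Y$ without using part (3) of Assumption~\ref{ass:Y} beyond the weak consequence $y_k \ge 5/6$.
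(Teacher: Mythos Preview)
Your proof is correct and follows essentially the same approach as the paper: both Taylor-expand $\sqrt{(1-t)/(1+t)}$ to second order, take the expectation conditional on $\sigma_u=+$, bound the second moment by $1$, and use $x_k\ge 5/6$ to absorb the quadratic term. The only cosmetic difference is that the paper establishes the clean pointwise bound $\sqrt{(1-t)/(1+t)}\le 1-t+\tfrac58 t^2$ (valid for small $t$) and applies it directly with $t=\theta X_{ui,k}$, whereas you first condition on $\sigma_{ui}$ and carry an explicit $O(\theta^4)$ remainder; both routes require $\theta\le\theta^*$ and yield the same inequality.
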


\begin{pf}
First, note that for sufficiently small $x$,
\begin{eqnarray*}
(1 + x) \bigl(1 - x + \tfrac{5}{8} x^2 \bigr)^2
&=& (1+x) \bigl(1 - 2x + \tfrac{18}{8} x^2 + O
\bigl(x^3 \bigr) \biggr)
\\
&=& 1 - x + \tfrac{1}{4} x^2
+ O \bigl(x^3 \bigr) \geq1-x,
\end{eqnarray*}
which may be rearranged to read
\[
\sqrt{\frac{1-x}{1+x}} \le1 - x + \frac{5}{8} x^2.
\]
Now, if $\theta^*$ is sufficiently small then we may apply this with
$x = \theta X_{ui,k}$, obtaining
\[
\E\bigl(A_i^2 \mid\sigma_u = + \bigr)
\le1 - \E(\theta X_{ui,k} \mid\sigma_u = +) +
\tfrac{5}8 \E\bigl(\theta^2 X_{ui,k}^2
\mid\sigma_u = + \bigr).
\]
Recalling the assumption that $x_k \ge5/6$, we have
\begin{eqnarray*}
1 - \E(\theta X_{ui,k} \mid\sigma_u = +) +
\frac{5}8 \E\bigl(\theta^2 X_{ui,k}^2
\mid\sigma_u = + \bigr)
&\le& 1 - \theta^2 x_k +
\frac{3\theta^2}{4} x_k
\\
&=&  1 - \frac{\theta ^2}{4} x_k.
\end{eqnarray*}
The same argument applies to $B_i$, but using $Y_i$ instead of $X_i$.
\end{pf}

\subsection{The $\mathbb{E}A^2-\mathbb{E}B^2$ term}

In this section, we will bound the $\llvert \E A^2 - \E B^2\rrvert $
term in
Lemma~\ref{lem:expansion-of-square}, bearing in mind that the
bound has to be at most of order $\theta^4$ in order for
$d^2 (\E A^2 - \E B^2)^2$ to be a function of $d \theta^2$.
Note that the distribution of $A_i$ conditioned on $\sigma_v = +$ is
equal to the distribution of $1/A_i$ conditioned on $\sigma_v = -$.
Hence,
%
%
\begin{eqnarray}
\label{eq:conditioning-A} \E\bigl(A_i^2 \mid\sigma_u
= + \bigr) &=& (1-\eta) \E\bigl(A_i^2 \mid
\sigma_{ui} = + \bigr) + \eta\E\bigl(A_i^2
\mid\sigma_{ui} = - \bigr)
\nonumber\\[-8pt]\\[-8pt]\nonumber
&=& \E\bigl((1-\eta) A_i^2 + \eta A_i^{-2}
\mid\sigma_{ui} = + \bigr).
\end{eqnarray}
Now,
%
%
\begin{eqnarray}
\label{eq:A-conditioned} (1-\eta) A_i^2 + \eta
A_i^{-2} &=& (1-\eta) \biggl(\frac{1-\theta X_{ui,k}}{1+\theta X_{ui,k}}
\biggr)^{1/2} + \eta\biggl(\frac{1+\theta X_{ui,k}}{1-\theta X_{ui,k}}
\biggr)^{1/2}
\nonumber
\\
&=& \frac{(1-\eta)(1-\theta X_{ui,k}) + \eta(1+\theta X_{ui,k})}{
\sqrt{(1+\theta X_{ui,k})(1-\theta X_{ui,k})}}
\\
&=& \frac{1-\theta^2 X_{ui,k}}{
\sqrt{1 - \theta^2 X_{ui,k}^2}}\nonumber
\end{eqnarray}
(recalling in the last line that $\theta= 1 - 2\eta$).

%
\begin{lemma}\label{lem:diff-bound-on-A}
There is a $\theta^* > 0$ such that
if $\theta< \theta^*$ then
\[
\biggl\llvert\frac{d }{d x} \frac{1-\theta^2 x}{\sqrt{1-\theta^2
x^2}} \biggr\rrvert\le3
\theta^2
\]
for all $x \in[-1, 1]$.
\end{lemma}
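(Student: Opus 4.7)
The plan is a direct computation: differentiate, simplify algebraically, then bound the resulting expression for small $\theta$.

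First I would apply the quotient rule to $f(x) := \frac{1-\theta^2 x}{\sqrt{1-\theta^2 x^2}}$. Writing the numerator of $f'(x)$ over the common denominator $(1-\theta^2 x^2)^{3/2}$, the numerator becomes
\[
-\theta^2(1-\theta^2 x^2) + \theta^2 x(1-\theta^2 x) = \theta^2\bigl(-1+\theta^2 x^2 + x - \theta^2 x^2\bigr) = \theta^2(x-1).
\]
So the derivative simplifies dramatically to
\[
f'(x) = \frac{\theta^2 (x-1)}{(1-\theta^2 x^2)^{3/2}}.
\]
The miraculous cancellation of the $\theta^2 x^2$ terms is what makes the bound come out to order $\theta^2$ (rather than just $\theta$), which is exactly what is needed upstream so that the $|\E A^2 - \E B^2|$ term in Lemma~\ref{lem:expansion-of-square} contributes a factor of order $\theta^4$.

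Given the explicit formula, the bound is routine. For $x \in [-1,1]$ we have $|x-1| \le 2$ and $x^2 \le 1$, so
\[
|f'(x)| \le \frac{2\theta^2}{(1-\theta^2)^{3/2}}.
\]
Choosing $\theta^* > 0$ small enough that $(1-(\theta^*)^2)^{3/2} \ge 2/3$ (for instance $\theta^* \le 1/2$ comfortably suffices) yields $|f'(x)| \le 3\theta^2$ for all $\theta \le \theta^*$ and all $x \in [-1,1]$.

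There is no real obstacle here; the whole content of the lemma is the algebraic simplification in the first step, and once that is in hand the bound is immediate. The one thing worth double-checking is the sign/algebra in the quotient rule step, since the point of the lemma is precisely that the naive $O(\theta)$ term cancels, leaving an $O(\theta^2)$ Lipschitz constant — and this cancellation is what gets used in the next step when controlling $\E A^2 - \E B^2$ via~\eqref{eq:conditioning-A} and~\eqref{eq:A-conditioned}.
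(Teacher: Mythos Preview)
Your proof is correct and follows the same approach as the paper's (direct computation of the derivative followed by elementary bounds); in fact, your algebraic simplification of the numerator to $\theta^2(x-1)$ is cleaner than the paper's version, which leaves the numerator unsimplified and bounds the two terms separately. One trivial slip: at $\theta^* = 1/2$ one has $(1-(\theta^*)^2)^{3/2} = (3/4)^{3/2} \approx 0.6495 < 2/3$, so your parenthetical numerical example is off by a hair, but of course any slightly smaller $\theta^*$ works and the argument is unaffected.
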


\begin{pf}
By a direct computation,
\[
\frac{d }{d x} \frac{1-\theta^2 x}{\sqrt{1-\theta^2 x^2}} = \frac{\theta
^2 x (1-\theta^2 x^2)^{-1/2} (1-\theta^2 x) - \theta
^2\sqrt{1-\theta^2 x^2}}{1-\theta^2 x^2}.
\]
Since $\llvert x\rrvert \le1$, we have
\begin{eqnarray*}
\biggl\llvert\frac{d }{d x} \frac{1-\theta^2 x}{\sqrt{1-\theta^2
x^2}} \biggr\rrvert&\le&
\frac{\theta^2(1-\theta^2)^{-1/2}(1+\theta^2) + \theta^2}{
1-\theta^2}
\\
&=& \theta^2 \frac{(1 - \theta^2)^{-1/2} (1 + \theta^2)
+ 1}{1 - \theta^2}.
\end{eqnarray*}
The result follows because $1 - \theta^2$ and $1 + \theta^2$ can be made
arbitrarily close to 1 by taking $\theta^*$ small enough.
\end{pf}

Now we apply~\eqref{eq:A-conditioned} with Lemma~\ref{lem:diff-bound-on-A}
to obtain the promised bound on \mbox{$\E A_i^2 - \E B_i^2$}.

%
\begin{lemma}\label{lem:A^2-B^2}
There is a $\theta^* > 0$ such that for all
$\theta< \theta^*$,
\[
\E\bigl(A_i^2 - B_i^2 \mid
\sigma_u = + \bigr) \le3 \theta^2 \sqrt{\E
\bigl((X_{ui,k} - Y_{ui,k})^2 \mid
\sigma_{u} = + \bigr)}.
\]
\end{lemma}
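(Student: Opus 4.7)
The plan is to combine the calculation already done in \eqref{eq:conditioning-A} and \eqref{eq:A-conditioned} with the derivative bound in Lemma~\ref{lem:diff-bound-on-A}. Define
\[
 g(x) = \frac{1-\theta^2 x}{\sqrt{1-\theta^2 x^2}}, \qquad x \in [-1,1].
\]
Combining \eqref{eq:conditioning-A} and \eqref{eq:A-conditioned} gives
\[
 \E(A_i^2 \mid \sigma_u = +) = \E\bigl(g(X_{ui,k}) \mid \sigma_{ui} = +\bigr),
\]
and the exact same derivation (which used only the recursion for $A_i$ and the symmetry of the broadcast process) gives
\[
 \E(B_i^2 \mid \sigma_u = +) = \E\bigl(g(Y_{ui,k}) \mid \sigma_{ui} = +\bigr).
\]
Subtracting, I get
\[
 \E(A_i^2 - B_i^2 \mid \sigma_u = +) = \E\bigl(g(X_{ui,k}) - g(Y_{ui,k}) \mid \sigma_{ui} = +\bigr).
\]

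From this point it is a one-line estimate: Lemma~\ref{lem:diff-bound-on-A} says $|g'| \le 3\theta^2$ on $[-1,1]$, so $|g(X_{ui,k}) - g(Y_{ui,k})| \le 3\theta^2 |X_{ui,k} - Y_{ui,k}|$ pointwise. Taking expectations and applying Cauchy--Schwarz,
\[
 \E(A_i^2 - B_i^2 \mid \sigma_u = +) \le 3\theta^2\, \E\bigl(|X_{ui,k} - Y_{ui,k}| \,\big|\, \sigma_{ui} = +\bigr) \le 3\theta^2 \sqrt{\E\bigl((X_{ui,k}-Y_{ui,k})^2 \,\big|\, \sigma_{ui}=+\bigr)}.
\]

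The only remaining loose end is that the lemma as stated conditions on $\sigma_u = +$, not on $\sigma_{ui} = +$. This is handled by the symmetry in part 2 of Assumption~\ref{ass:Y}: the random variable $(X_{ui,k} - Y_{ui,k})^2$ has the same conditional distribution under $\sigma_{ui} = +$ and $\sigma_{ui} = -$, since flipping the sign of the root of the subtree rooted at $ui$ flips the signs of both $X_{ui,k}$ and $Y_{ui,k}$ (so the squared difference is unchanged). Therefore
\[
 \E\bigl((X_{ui,k}-Y_{ui,k})^2 \,\big|\, \sigma_{ui}=+\bigr) = \E\bigl((X_{ui,k}-Y_{ui,k})^2 \,\big|\, \sigma_u=+\bigr),
\]
and substituting gives the claim.

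There is no real obstacle here---the work is already done in \eqref{eq:A-conditioned} and Lemma~\ref{lem:diff-bound-on-A}, and the only thing to be careful about is keeping track of which coordinate ($u$ or $ui$) is being conditioned on, which the sign-flip symmetry resolves.
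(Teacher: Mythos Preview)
Your proposal is correct and follows essentially the same route as the paper: combine \eqref{eq:conditioning-A}--\eqref{eq:A-conditioned} with the derivative bound of Lemma~\ref{lem:diff-bound-on-A}, then pass from conditioning on $\sigma_{ui}$ to conditioning on $\sigma_u$ using the sign-flip symmetry. The paper's proof is identical in structure; you have just been slightly more explicit about invoking part~2 of Assumption~\ref{ass:Y} for the final conditioning step.
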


\begin{pf}
By~\eqref{eq:conditioning-A} and~\eqref{eq:A-conditioned} (and analogously
with $A$ replaced by $B$), we have
\[
\E\bigl(A_i^2 - B_i^2 \mid
\sigma_u = + \bigr) = \E\biggl( \frac{1-\theta^2 X_{ui,k}}{\sqrt
{1-\theta^2 X_{ui,k}^2}} -
\frac{1-\theta^2 Y_{ui,k}}{\sqrt{1-\theta^2 Y_{ui,k}^2}} \Bigm|\sigma
_{ui} = + \biggr).
\]
For a general function $f$ we have $\E\llvert f(X) - f(Y)\rrvert
\le\E\llvert X - Y\rrvert \max_x \llvert\frac{d f}{d x} \rrvert$.
Applying this fact
with the
function $f(x) = \frac{1 - \theta^2 x}{\sqrt{1-\theta^2x^2}}$ and
the bound
of Lemma~\ref{lem:diff-bound-on-A},
\begin{eqnarray*}
\E\bigl(A_i^2 - B_i^2 \mid
\sigma_u = + \bigr) &\le&3 \theta^2 \E\bigl(\llvert
X_{ui,k} - Y_{ui,k}\rrvert\mid\sigma_{ui} = +\bigr)
\\
&\le&3 \theta^2 \sqrt{\E\bigl((X_{ui,k} -
Y_{ui,k})^2 \mid\sigma_{ui} = + \bigr)}.
\end{eqnarray*}

Finally, note that
\[
\E\bigl((X_{ui,k} - Y_{ui,k})^2 \mid
\sigma_{ui} = + \bigr) = \E\bigl((X_{ui,k} -
Y_{ui,k})^2 \mid\sigma_{u} = + \bigr).
\]
\upqed
\end{pf}

\subsection{Combining the estimates to complete the proof}
Next, we combine Lemma~\ref{lem:expansion-of-square} with
the estimates provided in Lemmas~\ref{lem:exp-taylor-quotient}
and~\ref{lem:A^2-B^2}.
%
%
\begin{lemma}\label{lem:recursion-ratio}
There is some constant $\theta^* > 0$ such that the following
holds.
Suppose that $X$ and $Y$ satisfy parts 1 and 2 of
Assumption~\ref{ass:Y} and that $x_k, y_k \ge5/6$ for
$k \ge K(\delta)$. If $u$ has $d \ge4$ children and
$\theta\le\theta^*$ then for $k \ge K(\delta)$,
\[
\frac{\E((X_{u,k+1} - Y_{u,k+1})^2 \mid\sigma_u = + )}{
\E((X_{u1,k} - Y_{u1,k})^2 \mid\sigma_{u1} = + )} \le C \bigl(d^2
\theta^4 + d
\theta^2 \bigr) e^{-\sfrac{\theta^2 d}{5}},
\]
for a universal constant $C$.
\end{lemma}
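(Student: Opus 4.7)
The plan is to combine the ingredients that Section~\ref{sec:small-theta} has already set up. Starting from~\eqref{eq:get-rid-of-denom}, I would set
\[
 A_i = \left(\frac{1-\theta X_{ui,k}}{1+\theta X_{ui,k}}\right)^{1/4}, \qquad
 B_i = \left(\frac{1-\theta Y_{ui,k}}{1+\theta Y_{ui,k}}\right)^{1/4},
\]
so that $(X_{u,k+1} - Y_{u,k+1})^2 \le 64\bigl(\prod_i A_i - \prod_i B_i\bigr)^2$. Conditional on $\sigma_u = +$ the children's labels $\sigma_{ui}$ are i.i.d., hence so are the pairs $(A_i, B_i)$, and Lemma~\ref{lem:expansion-of-square} then reduces the problem to producing bounds on $m := \max\{\E^+ A^2, \E^+ B^2\}$, on $(\E^+ A^2 - \E^+ B^2)^2$, and on $\E^+(A - B)^2$.

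The three inputs are delivered by results already in hand. Lemma~\ref{lem:exp-taylor-quotient} together with $x_k, y_k \ge 5/6$ gives $m \le 1 - \theta^2/5$, hence $m^{d-1} \le C e^{-\theta^2 d/5}$; and since $m$ is bounded below away from $0$ for $\theta \le \theta^*$, also $m^{d-2} \le C e^{-\theta^2 d/5}$. Lemma~\ref{lem:A^2-B^2} gives $(\E^+ A^2 - \E^+ B^2)^2 \le 9\theta^4\,\E^+(X_{ui,k} - Y_{ui,k})^2$. For the $\E^+(A - B)^2$ term I would differentiate directly: with $g_\theta(x) = ((1-\theta x)/(1+\theta x))^{1/4}$ one computes $g_\theta'(x) = -\tfrac{\theta}{2}(1-\theta x)^{-3/4}(1+\theta x)^{-5/4}$, which is bounded in absolute value by a constant times $\theta$ uniformly for $|x|\le 1$ and $\theta\le\theta^*$, yielding $\E^+(A - B)^2 \le C\theta^2\,\E^+(X_{ui,k} - Y_{ui,k})^2$.

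Part~2 of Assumption~\ref{ass:Y} makes the law of $(X_{ui,k} - Y_{ui,k})^2$ invariant under sign-flipping the root label, so its expectation conditional on $\sigma_u = +$ equals its expectation conditional on $\sigma_{ui} = +$, which by exchangeability of the children equals $\E^+(X_{u1,k} - Y_{u1,k})^2$. Plugging the three estimates into Lemma~\ref{lem:expansion-of-square} then yields
\[
 \E^+(X_{u,k+1} - Y_{u,k+1})^2 \;\le\; C\bigl(d^2\theta^4 + d\theta^2\bigr)\, e^{-\theta^2 d/5}\, \E^+(X_{u1,k} - Y_{u1,k})^2,
\]
which is the claimed ratio bound.

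The one step that needs genuine care is the Lipschitz estimate for $g_\theta$: the derivative contains the factor $(1-\theta x)^{-3/4}$, which would blow up as $\theta x \to 1$, but the a priori bounds $|X_{ui,k}|, |Y_{ui,k}| \le 1$ together with $\theta \le \theta^*$ keep $\theta x$ bounded strictly away from $\pm 1$, so that $|g_\theta'|$ is uniformly $O(\theta)$. Everything else is bookkeeping: the $d^2\theta^4$ contribution comes from the first term $d^2 m^{d-2}(\E^+ A^2 - \E^+ B^2)^2$ in (the proof of) Lemma~\ref{lem:expansion-of-square}, while the $d\theta^2$ contribution comes from $2 d m^{d-1}\,\E^+(A - B)^2$, and the hypothesis $d\ge 4$ enters only to absorb one factor of $1/m$ into a universal constant when passing from $m^{d-1}$ to $m^{d-2}$.
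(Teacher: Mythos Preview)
Your proposal is correct and follows essentially the same route as the paper: square~\eqref{eq:get-rid-of-denom}, apply Lemma~\ref{lem:expansion-of-square} to the i.i.d.\ pairs $(A_i,B_i)$, then feed in Lemma~\ref{lem:exp-taylor-quotient} for $m$, Lemma~\ref{lem:A^2-B^2} for $(\E^+A^2-\E^+B^2)^2$, and the Lipschitz bound $|g_\theta'|\le C\theta$ for $\E^+(A-B)^2$, finishing with the symmetry from part~2 of Assumption~\ref{ass:Y} to rewrite the conditioning. Your parenthetical remark that the $d^2 m^{d-2}$ factor comes from the \emph{proof} rather than the statement of Lemma~\ref{lem:expansion-of-square} is apt (the statement has a typo), and your handling of $m^{d-2}$ versus $m^{d-1}$ via a lower bound on $m$ is a valid alternative to the paper's direct estimate $m^{d-2}\le e^{-\theta^2(d-2)/5}\le Ce^{-\theta^2 d/5}$.
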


\begin{pf}
Taking the square of~\eqref{eq:get-rid-of-denom} and taking the
expectation on
both sides, we have
\[
\E\bigl((X_{u,k+1} - Y_{u,k+1})^2 \mid
\sigma_u = + \bigr) \le64 \E\Biggl( \Biggl(\prod
_{i=1}^d A_i - \prod
_{i=1}^d B_i \Biggr)^2
\biggm|\sigma_u = + \Biggr).
\]
Conditioned on $\sigma_u$, the pairs $(A_i, B_i)$ are i.i.d. and so
Lemma~\ref{lem:expansion-of-square} implies that
%
%
\begin{eqnarray}
\label{eq:apply-expansion}
&& \E\bigl((X_{u,k+1} - Y_{u,k+1})^2
\mid\sigma_u = + \bigr)
\nonumber\\[-8pt]\\[-8pt]\nonumber
&&\qquad \le64 d^2 m^{d-2} (a - b)^2 + 128 d
m^{d-1} \E\bigl((A_i - B_i)^2
\mid\sigma_u = + \bigr),
\end{eqnarray}
where
\begin{eqnarray*}
a &=& \E\bigl(A_i^2 \mid\sigma_u = +
\bigr),
\\
b &=& \E\bigl(B_i^2 \mid\sigma_u = +
\bigr),
\\
m &=& \max\{a, b\}.
\end{eqnarray*}

Now, if $\theta^*$ is sufficiently small then the function
$x \mapsto(\frac{1-\theta x}{1+\theta x})^{1/4}$ has derivative at most
$\theta$ for
$x \in[-1, 1]$. Hence,
%
%
\begin{eqnarray}\label{eq:(A-B)^2}
\E\bigl((A_i - B_i)^2 \mid
\sigma_u = + \bigr) &\le&\theta^2 \E\bigl((X_{u1,k}
- Y_{u1,k})^2 \mid\sigma_u = + \bigr)
\nonumber\\[-8pt]\\[-8pt]\nonumber
&=& \theta^2 \E\bigl((X_{u1,k} - Y_{u1,k})^2
\mid\sigma_{u1} \bigr)
\end{eqnarray}
provided that $\theta^*$ is sufficiently small. Define
\[
z = \E\bigl((X_{u1,k} - Y_{u1,k})^2 \mid
\sigma_{u1} \bigr) = \E\bigl((X_{u1,k} - Y_{u1,k})^2
\mid\sigma_{u1} = + \bigr).
\]

By Lemma~\ref{lem:exp-taylor-quotient} and the assumption that
$x_k, y_k \ge5/6$, if $\theta^*$ is sufficiently small, then
$m \le1 - \theta^2/5 \le\exp(-\theta^2/5)$. Moreover,
Lemma~\ref{lem:A^2-B^2} implies that $(a - b)^2 \le9 \theta^4 z$.
Plugging these and~\eqref{eq:(A-B)^2} back into~\eqref
{eq:apply-expansion}, we have
\[
\E\bigl((X_{u,k+1} - Y_{u,k+1})^2 \mid
\sigma_u = + \bigr) \le64 \bigl(9 d^2
\theta^4 e^{-\sfrac{\theta^2 (d-2)}{5}} + 2 d \theta^2 e^{-\sfrac{\theta
^2 (d-1)}{5}}
\bigr) z,
\]
which proves the claim.
\end{pf}

\begin{pf*}{Proof of Proposition~\ref{prop:large-d-recursion}}
If $\theta^2 d$ is
sufficiently large, then Lemma~\ref{lem:large-expected-mag} implies
that $x_k, y_k \ge5/6$ for $k \ge K(\delta)$; hence,\vspace*{2pt} the conditions
of Lemma~\ref{lem:recursion-ratio} are satisfied. Finally,
if $d \theta^2$ is large enough then the right-hand side
in Lemma~\ref{lem:recursion-ratio} is at most $\frac{1}2$.
\end{pf*}

\subsection{The recursion for large \texorpdfstring{$\theta$}{$theta$}}
To handle the case in which $\theta$ is not small, we require a different
argument. In this case, we study the derivatives of the recurrence,
obtaining the following result.
%
%
\begin{proposition}\label{prop:large-theta}
For any $0 < \theta^* < 1$, there is some $d^* = d^*(\theta^*)$ such that
for all $\theta\ge\theta^*$, $d \ge d^*$, and $k \ge K(\theta, d,
\delta)$,
\[
\E\sqrt{\llvert X_{\rho,k+1} - Y_{\rho,k+1}\rrvert} \le
\tfrac{1}2 \E\sqrt{\llvert X_{\rho,k} - Y_{\rho,k}\rrvert}.
\]
\end{proposition}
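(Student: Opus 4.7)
The key identity I would exploit is that the recursion \eqref{eq:mag-recurrence} can be rewritten as $X_{u,k+1} = \tanh(A_u)$ and $Y_{u,k+1} = \tanh(B_u)$, where $A_u = \sum_{i \in \C(u)} h(X_{ui,k})$, $B_u = \sum_{i \in \C(u)} h(Y_{ui,k})$, and $h(x) = \tanh^{-1}(\theta x)$. The strategy combines two facts: (i) $\tanh$ is strongly contractive, with $\mathrm{sech}^2(\xi) \le 4 e^{-2|\xi|}$, whenever $\xi$ is far from $0$; and (ii) $A_u$ and $B_u$ concentrate far from $0$ (with sign matching $\sigma_u$) except on an event of probability exponentially small in $d$. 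The $L^{1/2}$ norm is the natural functional because subadditivity $\sqrt{\sum a_i} \le \sum \sqrt{a_i}$ lets the factor of $d$ coming from the $d$ children be absorbed by the pointwise contraction rate $e^{-cd}$.

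For the good event, I would show that under $\sigma_u = +$, Assumption~\ref{ass:Y} together with the symmetry of the conditional law makes each summand $h(X_{ui,k})$ an i.i.d.\ bounded random variable with mean at least some $c(\theta) > 0$ for $\theta \ge \theta^*$ (using that $X_{ui,k}$ concentrates near $\sigma_{ui}$ via Lemma~\ref{lem:large-mag} once $k \ge K(\theta, d, \delta)$). A Hoeffding bound then yields
\[
\Pr\!\Bigl(A_u \ge \tfrac{c(\theta) d}{2} \text{ and } B_u \ge \tfrac{c(\theta) d}{2} \,\Big|\, \sigma_u = +\Bigr) \ge 1 - 2 e^{-c'(\theta) d},
\]
and similarly under $\sigma_u = -$. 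On this good event $G_u$, the mean value theorem applied to $\tanh$ together with the Lipschitz estimate $|h'(x)| \le \theta/(1-\theta^2)$ gives
\[
|X_{u,k+1} - Y_{u,k+1}| \le 4 e^{-c(\theta) d} \cdot \tfrac{\theta}{1-\theta^2} \sum_{i \in \C(u)} |X_{ui,k} - Y_{ui,k}|.
\]
Taking square roots, using $\sqrt{\sum a_i} \le \sum \sqrt{a_i}$, and then expectations, I would obtain
\[
\E\!\bigl[\sqrt{|X_{u,k+1} - Y_{u,k+1}|}\,\mathbf{1}_{G_u}\bigr] \le C(\theta)\, d\, e^{-c(\theta) d / 2}\, \alpha_k,
\]
where $\alpha_k := \E\sqrt{|X_{\rho,k} - Y_{\rho,k}|}$. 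For $d \ge d^*(\theta^*)$ chosen large enough, this prefactor is at most $1/4$.

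The main obstacle is the complementary event $G_u^c$, where $A_u$ or $B_u$ may be close to zero. I would decompose $G_u^c$ into two pieces: a "wrong-sign" subevent on which both $A_u, B_u$ remain large in magnitude (but with sign opposite to $\sigma_u$), so that the same $\mathrm{sech}^2$ contraction applies and the same estimate as above holds; and a "weak-signal" subevent whose probability is again at most $e^{-c''(\theta) d}$ by a further Hoeffding bound at both tails. On the weak-signal piece, the universal Lipschitz bound $|X_{u,k+1} - Y_{u,k+1}| \le \tfrac{\theta}{1-\theta^2} \sum_i |X_{ui,k} - Y_{ui,k}|$ combined with Cauchy--Schwarz against $\Pr(G_u^c)^{1/2}$ yields a contribution that shrinks with $d$ but only sublinearly in $\alpha_k$. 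This is the delicate point: I expect the resolution to exploit the extra observation that on the weak-signal subevent both $X_{u,k+1}$ and $Y_{u,k+1}$ are themselves small (so that $|X_{u,k+1} - Y_{u,k+1}|$ enjoys additional cancellation beyond the Lipschitz bound), together with choosing $d^*(\theta^*)$ large enough that every residual contribution from $G_u^c$ is absorbed into the remaining $\alpha_k/4$ of the budget, yielding the desired contraction $\alpha_{k+1} \le \alpha_k / 2$.
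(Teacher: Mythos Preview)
Your $\tanh$ reparametrization is equivalent to the paper's framework (indeed $g(x)=\tanh\sum_i\tanh^{-1}(\theta x_i)$, and your $\mathrm{sech}^2(A_u)$ is the paper's $4g_1g_2/(g_1+g_2)^2$), and the good-event part of your argument is sound. The genuine gap is exactly where you flag it: the weak-signal piece of $G_u^c$. Your Cauchy--Schwarz step yields
\[
\E\bigl[\mathbf 1_{(b)}\sqrt{|X_{u,k+1}-Y_{u,k+1}|}\bigr]\le C(\theta)\,d\,\Pr((b))^{1/2}\bigl(\E|X_{ui,k}-Y_{ui,k}|\bigr)^{1/2},
\]
which is bounded by $C(\theta)\,d\,e^{-c''d/2}$ but is \emph{not} proportional to $\alpha_k$. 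You therefore only obtain $\alpha_{k+1}\le \alpha_k/4+\epsilon(d)$, whose iterates converge to $\tfrac43\epsilon(d)>0$ rather than to $0$, so the stated contraction $\alpha_{k+1}\le\alpha_k/2$ does not follow. Your proposed rescue is also incorrect: on the weak-signal event only \emph{one} of $A_u,B_u$ is required to be small, so the other magnetization can be $\pm1+o(1)$ and $|X_{u,k+1}-Y_{u,k+1}|$ can be of order $1$ with no extra cancellation.

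The paper avoids a good/bad decomposition altogether. Its key device is to bound $|\partial g/\partial x_i|$ not by a uniform constant on a good event, but pointwise by the convex function
\[
m_i(x)=\frac{1}{\eta^2}\prod_{j\ne i}\frac{1-\theta x_j}{1+\theta x_j},
\]
which \emph{does not depend on $x_i$}. Convexity gives $|g(x)-g(y)|\le\sum_i|x_i-y_i|\max\{m_i(x),m_i(y)\}$; after square roots and subadditivity, conditional independence of child $i$ from $\{j\ne i\}$ given $\sigma_u$ lets the expectation factor exactly:
\[
\E\sqrt{|X_{u,k+1}-Y_{u,k+1}|}\le \sum_i\E\sqrt{|X_{ui,k}-Y_{ui,k}|}\cdot\E\sqrt{\max\{m_i(X_{L_1(u),k}),m_i(Y_{L_1(u),k})\}}.
\]
The second factor is then controlled termwise by Lemma~\ref{lem:expectation-sqrt}, giving $\E\sqrt{m_i}\le \lambda^{d-5}$ with $\lambda<1$; for $d$ large this is at most $1/(2d)$, and the contraction follows with every term already proportional to $\alpha_k$. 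In your language, the paper replaces the concentration event $G_u$ by the \emph{random} contraction factor $\prod_{j\ne i}\sqrt{(1-\theta X_{uj,k})/(1+\theta X_{uj,k})}$ and takes its expectation directly---this is what makes the argument close.
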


Combined with Proposition~\ref{prop:large-d-recursion}, this proves
Theorem~\ref{teo:d-ary}.
Indeed, to complete the choices of parameters we first take $\theta^*$
to be
the universal constant in Proposition~\ref{prop:large-d-recursion}.
Then let $d^* = d^*(\theta^*)$
be given by Proposition~\ref{prop:large-theta} (note that $d^*$ is
also a universal constant).
Finally, choose $C$ to be the maximum of
$d^*$ and the $C$ from Proposition~\ref{prop:large-d-recursion}.
Now, if $\theta^2 d \ge C$ then either $\theta\le\theta^*$ in which
case Proposition~\ref{prop:large-d-recursion}
applies, or $\theta\ge\theta^*$ in which case $\theta\le1$ implies
that $d \ge C \ge d^*$
and so Proposition~\ref{prop:large-theta} applies. In either case, we
deduce Theorem~\ref{teo:d-ary}.

Let $g: \R^d \to\R$ denote the function
%
%
\begin{equation}
\label{eq:g-def} g(x) = \frac{\prod_{i=1}^d (1 + \theta x_i) - \prod
_{i=1}^d (1 -
\theta x_i)}{
\prod_{i=1}^d (1 + \theta x_i) + \prod_{i=1}^d (1 - \theta x_i)}.
\end{equation}
Then the recurrence~\eqref{eq:mag-recurrence} may be written as
$X_{u,k+1} = g(X_{u1,k}, \dots, X_{ud,k})$.
We will also abbreviate $(X_{u1,k}, \dots, X_{ud,k})$ by
$X_{L_1(u),k}$, so that we may write
$X_{u,k+1} = g(X_{L_1(u),k})$.

Define $g_1(x) = \prod_{i=1}^d (1 + \theta x_i)$ and $g_2(x) = \prod_{i=1}^d
(1 - \theta x_i)$ so that $g$ can be written as
$g = \frac{g_1 - g_2}{g_1 + g_2}$. Since
$\frac{\partial g_1}{\partial x_i} = \theta\frac{g_1}{1 + \theta
x_i}$ and
$\frac{\partial g_2}{\partial x_i} = - \theta\frac{g_2}{1 - \theta
x_i}$, we have
%
%
\begin{eqnarray}\label{eq:diff-formula}
\frac{\partial g}{\partial x_i} &=& \frac{\partial}{\partial x_i} \frac
{g_1 - g_2}{g_1 + g_2}
\nonumber
\\
&=& 2 \frac{g_2 \frac{\partial g_1}{\partial x_i} - g_1 \frac
{\partial g_2}{\partial x_i}}{(g_1 +
g_2)^2}
\\
&=& 4 \theta\frac{g_1 g_2}{(g_1 + g_2)^2 (1 - \theta^2 x_i^2)}.\nonumber
\end{eqnarray}
If $\llvert x_i\rrvert \le1$, then $g_1$ and $g_2$ are both positive, so
$\frac{g_1 g_2}{(g_1 + g_2)^2} \le\frac{g_1 g_2}{g_1^2} = \frac{g_2}{g_1}$;
of course, we also have the symmetric bound
$\frac{g_1 g_2}{(g_1 + g_2)^2} \le\frac{g_1}{g_2}$.
Define
\begin{eqnarray*}
h_i^+(x) &=& 4 \frac{g_2}{(1 - \theta^2 x_i^2 )g_1} = \frac{4}{(1+\theta
x_i)^2} \prod
_{j \ne i} \frac{1-\theta
x_j}{1+\theta x_j},
\\
h_i^-(x) &=& 4 \frac{g_1}{(1 - \theta^2 x_i^2 )g_2} = \frac{4}{(1-\theta
x_i)^2} \prod
_{j \ne i} \frac{1+\theta
x_j}{1-\theta x_j},
\\
h_i(x) &=& \min\bigl\{h_i^+(x), h_i^-(x)
\bigr\}.
\end{eqnarray*}
By~\eqref{eq:diff-formula} and since $\llvert \theta\rrvert \le1$,
%
%
\begin{equation}
\label{eq:diff-bound-2/1} \biggl\llvert\frac{\partial g}{\partial x_i}
\biggr\rrvert\le
h_i(x).
\end{equation}
The point is that if $\sigma_u = +$ then for most $v \in L_1(u)$,
$X_{v,k}$ will be close to 1 and so $h_i^+(X_{L_1(u), k})$ will be small.
On the other hand, if $\sigma_u = -$ then for most $v \in L_1(u)$,
$X_{v,k}$ will be close to $-1$ and so $h_i^-(X_{L_1(u), k})$
will be small.

Note that $h_i^+$ is convex on $[-1, 1]^d$
because it is the tensor product of nonnegative, convex functions.
Hence, for any
$x, y \in[-1, 1]^d$ and any $0 < \lambda< 1$,
\[
\biggl\llvert\frac{\partial g}{\partial x_i} \bigl(\lambda x +
(1-\lambda) y \bigr) \biggr
\rrvert\le h_i^+ \bigl(\lambda x + (1-\lambda) y \bigr) \le\max
\bigl\{h_i^+(x), h_i^+(y) \bigr\}.
\]
Then the mean value theorem implies that
\[
\bigl\llvert g(x) - g(y)\bigr\rrvert\le\sum_i
\llvert x_i - y_i\rrvert\max\bigl\{h_i^+(x),
h_i^+(y) \bigr\}.
\]
Applied for $x = X_{L_1(u),k} = (X_{u1,k}, \dots, X_{ud,k})$ and
$y = Y_{L_1(u),k} = (Y_{u1,k}, \dots,\break  Y_{ud,k})$, this yields
%
%
\begin{eqnarray}\label{eq:mvt-g}
&& \llvert X_{u,k+1} - Y_{u,k+1}\rrvert
\nonumber\\[-8pt]\\[-8pt]\nonumber
&&\qquad \le\sum _i \llvert X_{ui,k} - Y_{ui,k}
\rrvert\max\bigl\{ h_i^+(X_{L_1(u),k}), h_i^+(Y_{L_1(u),k})
\bigr\}.
\end{eqnarray}
Note that the two terms on the right-hand side of~\eqref{eq:mvt-g} are
dependent on one another. Hence, it will be convenient to bound
$h_i^+(X_{L_1(u),k})$ by something that does not depend
on $X_{ui}$. To that end, note that for $\llvert x_i\rrvert \le1$, we have
$1 + \theta x_i \ge1-\theta= 2\eta$, and so
%
%
\begin{equation}
\label{eq:mi-def} h_i^+(x) = \frac{4}{(1 + \theta x_i)^2} \prod
_{j\ne i} \frac{1-\theta
x_j}{1+\theta x_j} \le\frac{1}{\eta^2} \prod
_{j \ne i} \frac{1-\theta x_j}{1+\theta x_j} =: m_i(x).
\end{equation}
Since $m_i(x)$ does not depend on $x_i$, it follows that
$m_i(X_{L_1(u),k})$ is
independent of $X_{ui,k}$ given $\sigma_u$ (and similarly with $Y$
instead of $X$).
Hence,~\eqref{eq:mvt-g} implies that
%
%
\begin{eqnarray}\label{eq:bound-by-m}
&& \E\bigl(\sqrt{\llvert X_{u,k+1} - Y_{u,k+1}\rrvert} \mid
\sigma_u=+ \bigr)\nonumber
\\
&&\qquad \le\sum_i \E\bigl(\sqrt{\llvert
X_{ui,k} - Y_{ui,k}\rrvert}\mid\sigma_u=+ \bigr)
\\
&&\quad\qquad{}\times
\E\bigl(\sqrt{\max\bigl\{m_i(X_{L_1(u),k}),
m_i(Y_{L_1(u),k}) \bigr\}} \mid\sigma_u=+ \bigr).\nonumber
\end{eqnarray}

To prove Proposition~\ref{prop:large-theta}, it therefore suffices to
show that\break $\E(\sqrt{m_i(X_{L_1(u),k})} \mid\sigma_u=+)$ and
$\E(\sqrt{m_i(Y_{L_1(u),k})} \mid\sigma_u=+)$ are both small.
Since $m_i(X_{L_1(u),k})$ is a product of independent (when conditioned
on $\sigma_u$) terms, it is enough to show that each of these terms has
small expectation. The following lemma will help bounding these terms.

%
\begin{lemma}\label{lem:expectation-sqrt}
For any $0 < \theta^* < 1$, there is some $d^* = d^*(\theta^*)$ and some
$\lambda= \lambda(\theta^*) < 1$ such that
for all $\theta\ge\theta^*$, $d \ge d^*$ and $k \ge K(\theta, d,
\delta)$,
\[
\E\biggl(\sqrt{\frac{1-\theta X_{ui,k}}{1+\theta X_{ui,k}}} \Bigm|\sigma
_u=+ \biggr) \le\min
\bigl\{\lambda, 4\eta^{1/4} \bigr\}.
\]
\end{lemma}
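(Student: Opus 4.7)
The plan is to leverage the identity already established in~\eqref{eq:conditioning-A}--\eqref{eq:A-conditioned}: after conditioning on $\sigma_{ui}$ and using the fact that the distribution of $X_{ui,k}$ given $\sigma_{ui} = -$ coincides with that of $-X_{ui,k}$ given $\sigma_{ui} = +$, one has
\[
\E\left(\sqrt{\frac{1-\theta X_{ui,k}}{1+\theta X_{ui,k}}} \ \Big\mid\ \sigma_u = +\right) = \E\left(g(X_{ui,k}) \ \Big\mid\ \sigma_{ui} = +\right),
\]
where $g(x) = (1-\theta^2 x)/\sqrt{1-\theta^2 x^2}$. The value $g(1) = \sqrt{1-\theta^2} = 2\sqrt{\eta(1-\eta)}$ is the ``ideal'' one obtained when $X_{ui,k} = \sigma_{ui} = 1$, so the task reduces to showing that typical values of $X_{ui,k}$ do not make $g(X_{ui,k})$ substantially larger than $g(1)$.

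The key step will be a linearization of $g$ that is sharp at $x = 1$: since $1-\theta^2 x = (1-\theta^2) + \theta^2(1-x)$ and $\sqrt{1-\theta^2 x^2} \ge \sqrt{1-\theta^2}$ for every $x \in [-1,1]$,
\[
 g(x) \ \le\ \frac{1-\theta^2 x}{\sqrt{1-\theta^2}} \ =\ \sqrt{1-\theta^2} + \frac{\theta^2 (1-x)}{\sqrt{1-\theta^2}}.
\]
Taking expectations given $\sigma_{ui}=+$ and applying the first-moment bound of Lemma~\ref{lem:large-expected-mag} (which gives $\E(1 - X_{ui,k} \mid \sigma_{ui}=+) \le C\eta(1-\eta)/(\theta^2 d)$), I obtain the master estimate
\[
 \E\left(g(X_{ui,k}) \ \Big\mid\ \sigma_{ui}=+\right) \ \le\ 2\sqrt{\eta(1-\eta)} + \frac{C\sqrt{\eta(1-\eta)}}{2d} \ =\ 2\sqrt{\eta(1-\eta)}\left(1+\frac{C}{4d}\right).
\]

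From this master bound both halves of the minimum follow. For the $\lambda$ half, the hypothesis $\theta \ge \theta^*$ forces $\eta \le (1-\theta^*)/2$, so $2\sqrt{\eta(1-\eta)} \le \sqrt{1-(\theta^*)^2}$; choosing $d^* = d^*(\theta^*)$ large enough that $1 + C/(4d^*)$ is sufficiently close to $1$ makes the master bound at most $\lambda(\theta^*) := (1 + \sqrt{1-(\theta^*)^2})/2 < 1$. For the $4\eta^{1/4}$ half, observe that $2\sqrt{\eta(1-\eta)} \le 2\sqrt{\eta}$ and that $2\sqrt{\eta} \le 4\eta^{1/4}$ whenever $\eta \le 4$; enlarging $d^*$ if necessary to absorb the factor $(1+C/(4d))$ (using $\eta \le 1/2$) finishes this case as well.

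The delicate point is that the linearization of $g$ must be sharp at $x = 1$: since $g'(1) = 0$, any crude linearization would introduce an error of order $(1-x)^2$ that a first-moment bound cannot control. The elementary inequality $1 - \theta^2 x^2 \ge 1 - \theta^2$ is precisely what makes the bound on $g$ above tight at $x=1$, and this is what permits using only the first-moment estimate of Lemma~\ref{lem:large-expected-mag} rather than the tail estimate of Lemma~\ref{lem:large-mag}, while still keeping the error term small enough to close both halves of the minimum with a single choice of $d^*$ depending only on $\theta^*$.
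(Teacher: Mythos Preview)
Your proof is correct and takes a genuinely different, cleaner route than the paper's.

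The paper works directly with $f(x)=\sqrt{(1-\theta x)/(1+\theta x)}$ conditioned on $\sigma_u=+$, invokes the tail bound of Lemma~\ref{lem:large-mag} with $t=\epsilon^{-1}\eta^{-3/4}$, and splits the expectation according to whether $X$ exceeds the threshold $1-\epsilon\eta^{1/4}$; it then checks separately that each piece is both bounded away from $\tfrac12$ and below $2\eta^{1/4}$. Your argument instead first passes through the identity of~\eqref{eq:conditioning-A}--\eqref{eq:A-conditioned} to rewrite the quantity as $\E(g(X_{ui,k})\mid\sigma_{ui}=+)$ with $g(x)=(1-\theta^2 x)/\sqrt{1-\theta^2 x^2}$, then uses the elementary pointwise bound $g(x)\le \sqrt{1-\theta^2}+\theta^2(1-x)/\sqrt{1-\theta^2}$, and finishes with only the first-moment estimate $\E^+(1-X)\le C\eta(1-\eta)/(\theta^2 d)$ of Lemma~\ref{lem:large-expected-mag}. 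The payoff is a single master inequality $2\sqrt{\eta(1-\eta)}(1+C/(4d))$ from which both the $\lambda$ and the $4\eta^{1/4}$ bounds drop out immediately, with no threshold-splitting and no appeal to Markov's inequality. The paper's approach is perhaps more ``generic'' (it would survive without the algebraic identity~\eqref{eq:A-conditioned}), but yours is shorter and uses strictly less input.

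One minor remark: your concluding paragraph about $g'(1)=0$ is a bit muddled as written (if $g'(1)=0$ then the Taylor error near $1$ is \emph{quadratic}, which would be favourable, not problematic). The actual reason your linearization works is simply that the crude bound $\sqrt{1-\theta^2 x^2}\ge\sqrt{1-\theta^2}$ happens to give a linear majorant of $g$ that is exact at $x=1$ with value $2\sqrt{\eta(1-\eta)}$; no differentiability considerations are needed. This does not affect the validity of the proof.
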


The proof of Lemma~\ref{lem:expectation-sqrt} is straightforward but
tedious, and we postpone
it until the \hyperref[app]{Appendix}. Instead, we will now prove Proposition~\ref
{prop:large-theta}.

\begin{pf*}{Proof of Proposition~\ref{prop:large-theta}}
By Lemma~\ref{lem:expectation-sqrt}, and the definition~\eqref{eq:mi-def}
of $m_i$, it follows that
%
%
\begin{eqnarray}\label{eq:mi-bound}
\E\bigl(\sqrt{m_i(X_{ui,k})} \mid
\sigma_u = + \bigr) &\le& \eta^{-1} \min\bigl\{ \lambda,
\eta^{1/4} \bigr\}^{d-1}
\nonumber\\[-8pt]\\[-8pt]\nonumber
&\le& \min\bigl\{\lambda,\eta^{1/4} \bigr\}^{d-5} \le\lambda^{d-5}.
\end{eqnarray}
In particular, if $d^*(\theta^*)$ is sufficiently large then $d
\lambda^{d-5} \le
1/4$ for all $d \ge d^*$.
The same argument applies with $Y$ replacing $X$, and hence
%
%
\begin{equation}
\label{mi-bound-2} \E\bigl(\sqrt{\max\{m_i(X_{L_1(u),k}),
m_i(Y_{L_1(u),k})} \mid\sigma_u = + \bigr) \le
\frac{1}{2d}.
\end{equation}
By~\eqref{eq:bound-by-m}, we have
\[
\E\bigl(\sqrt{\llvert X_{u,k+1} - Y_{u,k+1}\rrvert} \mid
\sigma_u = + \bigr) \le\tfrac{1}{2} \E\bigl(\sqrt{\llvert
X_{u,k} - Y_{u,k}\rrvert} \mid\sigma_u = +
\bigr),
\]
and so we have proved Proposition~\ref{prop:large-theta}.
\end{pf*}

\section{Reconstruction accuracy on Galton--Watson trees}
\label{sec:gw}

In this section, we will adapt the proof of the $d$-ary case
(Theorem~\ref{teo:d-ary}) to the Galton--Watson case (Theorem~\ref
{teo:robust-tree-intro}).
Let $T \subset\N^*$ be a Galton--Watson tree with offspring distribution
$\Pois(d)$. Recall that such a tree may be constructed by taking,
for each $u \in\N^*$, an independent $\Pois(d)$ random variable
$D_u$. Then define $T \subset\N^*$ recursively by starting with
$\varnothing\in T$ and then taking $ui \in T$ for $i \in\N$
if $u \in T$ and $i \le D_u$.

As in Section~\ref{sec:d-ary}, we let $\{\sigma_u: u \in T\}$ be distributed
as the two-state broadcast process on $T$ with parameter $\eta$, and let
$\{\tau_u: u \in T\}$ be the noisy version, with parameter $\delta$.
We recall the magnetization
\begin{eqnarray*}
X_{u,k} &=& \Pr(\sigma_u = + \mid\sigma_{L_k(u)}) -
\Pr(\sigma_u = - \mid\sigma_{L_k(u)}),
\\
x_k &=& \E(X_{u,k} \mid\sigma_u=+).
\end{eqnarray*}
Note that unlike in Section~\ref{sec:d-ary}, $X_{u,k}$ now depends on both
the randomness of the tree
and the randomness of $\sigma$. Hence, $x_k$ now averages over both
the randomness of
the tree and the randomness of $\sigma$.

We recall that $X$ satisfies the recursion~\eqref{eq:mag-recurrence}.
As in Section~\ref{sec:d-ary}, we will let $\{Y_{u,k}\}$ be any collection
of random variables which satisfies the same recursion (for large
enough $k$),
and for which $Y_{u,k}$ is a good estimator of $\sigma_u$ given
$\sigma_{L_k(u)}$.
%
%
\begin{assumption}\label{ass:gw-Y}
There is a $K = K(\delta)$ and a constant $C$ such that for all $k \ge
K$, the following
hold:
\begin{longlist}[2.]
\item[1.]
$
Y_{u,k+1} = \frac
{\prod_{i \in\C(u)} (1 + \theta Y_{ui,k}) - \prod_{i \in\C(u)} (1
- \theta Y_{ui,k})}{
\prod_{i \in\C(u)} (1 + \theta Y_{ui,k}) + \prod_{i \in\C(u)} (1
- \theta Y_{ui,k})}$.

\item[2.] The distribution of $Y_{u,k}$ given $\sigma_u = +$ is equal
to the distribution of $-Y_{u,k}$ given $\sigma_u = -$.

\item[3.] With probability at least $1 - e^{-cd}$ over $T$,
\[
\E(Y_{u,k} \mid\sigma_u = +, T) \ge1 -
\frac{C \eta}{\theta^2
d}.
\]
\end{longlist}
\end{assumption}

Note that Assumption~\ref{ass:gw-Y} is the same as Assumption~\ref{ass:Y}
except for part 3. Indeed, the change in part 3 between Assumption~\ref{ass:Y}
and Assumption~\ref{ass:gw-Y} points to the main change, and biggest challenge,
in extending our previous argument to Galton--Watson trees: unlike for
a regular tree,
there is always some chance that a Galton--Watson tree will go extinct,
or that it
will be thinner and more spindly than expected. In this case, we will
not be able
to reconstruct the broadcast process as well as we might want, even as
$\eta\to0$.

In any case, in order to prove Theorem~\ref{teo:robust-tree-intro}
it suffices to prove that $Y$ satisfies part 3 of Assumption~\ref{ass:gw-Y}
as well as the following theorem.

%
\begin{theorem}\label{teo:gw-main}
Under Assumption~\ref{ass:gw-Y},
there is a universal constant $C$ such that if $\theta^2 d \ge C$ then
$\lim_{k \to\infty} \E\llvert X_{\rho,k} \rrvert = \lim_{k \to\infty}
\E
\llvert Y_{\rho,k}\rrvert $.
\end{theorem}

Recall that $p_T(a,b)$ is equal to $\lim_{k \to\infty} (1 + \E
\llvert X_{\rho,k} \rrvert )/2$ in the
case $d = (a + b)/2$ and $\eta= b/(a + b)$, and that $\tilde p_T(a,b)$ is
equal to $\lim_{k\to\infty} (1 + \E\llvert Y_{\rho,k}\rrvert )/2$ in
the same
case. In particular,
Theorem~\ref{teo:gw-main} immediately implies Theorem~\ref
{teo:robust-tree-intro}.

\subsection{Large expected magnetization}

The first step toward extending
Theorem~\ref{teo:d-ary} to the Galton--Watson case is to show
that the magnetization of each node tends to be large.

%
\begin{proposition}\label{prop:gw-large-expected-mag}
There is a universal constant $c > 0$ such that for all $k \ge K(\theta
, d, \delta)$,
\[
\Pr\biggl(\E(X_{\rho,k} \mid\sigma_\rho= +, T) \ge1 -
\frac
{16\eta}{\theta^2 d} \biggr) \ge1 - e^{-cd}
\]
and similarly for $Y_{\rho,k}$.
Hence, $x_k,y_k \ge1 - \frac{8\eta}{\theta^2 d} - 2 e^{-cd}$.
\end{proposition}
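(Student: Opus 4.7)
The plan is to mimic the simple-majority analysis of Section~\ref{sec:majority} \emph{conditionally} on the tree $T$, and then separately control the probability that $T$ is atypical via a Galton--Watson large-deviation argument. Since $\sgn(X_{\rho,k})$ is the Bayes-optimal estimator of $\sigma_\rho$ given $(T,\sigma_{L_k(\rho)})$, its conditional success probability dominates that of the majority estimator $\sgn(S_{\rho,k})$ with $S_{\rho,k}=\sum_{v\in L_k(\rho)}\sigma_v$; analogously, $\sgn(Y_{\rho,k})$ dominates $\sgn(\tilde S_{\rho,k})$ with $\tilde S_{\rho,k}=\sum_{v\in L_k(\rho)}\tau_v$. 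So it suffices to run Chebyshev's inequality on $S_{\rho,k}$ and $\tilde S_{\rho,k}$ conditional on $T$, and then rule out atypical $T$.

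A direct computation gives $\E^+(S_{\rho,k}\mid T)=\theta^k|L_k(\rho)|$, and unrolling the variance recursion of Lemma~\ref{lem:majority-second-moment} level by level in $T$ yields
\[
\Var^+(S_{\rho,k}\mid T)=4\eta(1-\eta)\sum_{i=1}^{k}\theta^{2(k-i)}\sum_{w\in L_i(\rho)}|L_{k-i}(w)|^2,
\]
with the noisy analogue $\Var^+(\tilde S_{\rho,k}\mid T)=4\delta(1-\delta)|L_k(\rho)|+(1-2\delta)^2\Var^+(S_{\rho,k}\mid T)$ following by the same conditioning-on-$S$ trick used in the $d$-regular case. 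Call $T$ \emph{nice} if $|L_k(\rho)|\ge\alpha d^k$ and $\sum_{w\in L_i(\rho)}|L_{k-i}(w)|^2\le\beta d^{2k-i}$ for every $1\le i\le k$, with absolute constants $\alpha,\beta>0$. On the nice event these bounds telescope exactly as in the $d$-regular computation to give $\Var^+/(\E^+)^2=O(\eta/(\theta^2 d))$, so Chebyshev conditional on $T$ yields $\Pr^+(S_{\rho,k}\le 0\mid T)\le C\eta/(\theta^2 d)$, and likewise for $\tilde S_{\rho,k}$ once $\delta<1/2$ is fixed and $k$ is large enough. The chain of inequalities from Lemma~\ref{lem:large-expected-mag} (lower bound $\E^+|X|$ via optimality of $\sgn(X)$, then convert to a bound on $\E^+X$ using $X\ge -1$) then produces $\E^+(X_{\rho,k}\mid T)\ge 1-16\eta/(\theta^2 d)$ on the nice event, and the same for $Y$. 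Averaging the conditional bound against $T$, with the trivial $\ge -1$ bound off the nice event, yields $x_k,y_k\ge 1-8\eta/(\theta^2 d)-2e^{-cd}$.

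The main obstacle is proving the nice-tree estimate $\Pr(T\text{ nice})\ge 1-e^{-cd}$ with both bounds holding uniformly in $i\le k$. The lower bound $|L_k(\rho)|\ge\alpha d^k$ is a quantitative form of the Kesten--Stigum survival statement for $\Pois(d)$ Galton--Watson trees, whose $e^{-cd}$ rate is governed by the probability that the root has too few children: once $|L_1|\gtrsim d$, the Poisson conditional distributions $|L_{j+1}|\mid |L_j|\sim\Pois(d\,|L_j|)$ concentrate so tightly around their means that subsequent levels contribute only a summable geometric correction. For the second-moment bound I will use $\E[|L_m|^2]=d^{2m}+O(d^{2m-1})$ to write $\E\bigl[\sum_{w\in L_i}|L_{k-i}(w)|^2\bigm| |L_i|\bigr]=|L_i|\,\E[|L_{k-i}|^2]$, which on $|L_i|\approx d^i$ is of order $d^{2k-i}$, and then concentrate this sum of $|L_i|$ i.i.d.\ subtree-squares via a Chernoff-type bound whose deviation probability decays exponentially in $|L_i|$. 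The delicate point is to avoid losing a factor of $k$ in the union bound over the levels $i$; the trick, as in the classical Kesten--Stigum analysis, is to pay the $e^{-cd}$ cost once at the first generation and then exploit the additional exponential concentration at each subsequent level to keep the total failure probability at $e^{-cd}$.
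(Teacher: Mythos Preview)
Your route is genuinely different from the paper's. You use the \emph{unweighted} majority $S_{\rho,k}=\sum_{v\in L_k}\sigma_v$ and then try to control the shape of the random tree via a ``nice'' event at every level. The paper instead invokes the \emph{weighted} majority of Evans et al.\ (Theorem~\ref{thm:weighted-maj}), with weights given by the unit current flow in an electrical network on $T$. The payoff is that $\Var^+(R_k\mid T)$ is exactly the effective resistance $\res(k)$, so no separate control on the tree shape is needed: the whole proposition reduces to showing $\cond(k)\ge\theta^2 d/(16\eta)$ with probability $\ge 1-e^{-cd}$. That is Lemma~\ref{lem:conductance}, proved by a short recursion: conductances add over the children of $\rho$, each child independently contributes at least its median value with probability $\ge 1/2$, and a single $\Pois(d)$ tail bound gives the $e^{-cd}$. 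No uniform moment control on $|L_m|$ is ever required.

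Your nice-tree estimate, by contrast, hides a real lemma. At level $i=1$ you are summing $\Pois(d)$ i.i.d.\ copies of $|L_{k-1}|^2/d^{2(k-1)}$, and to get a Chernoff/Bernstein bound with rate $e^{-cd}$ you need a sub-exponential (say $\psi_1$) bound on these summands that is \emph{uniform in $k$} and in $d$. This is true for Poisson offspring---one can show $\sup_m\E e^{sW_m}<\infty$ for some $s>0$ via the recursion $\phi_m(s)=\exp\bigl(d(\phi_{m-1}(s/d)-1)\bigr)$ for $\phi_m(s)=\E e^{sW_m}$---but it is not a triviality and you have not supplied it; without it the ``Chernoff-type bound whose deviation probability decays exponentially in $|L_i|$'' is unjustified. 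The same uniform-moment issue recurs for the lower bound $|L_k|\ge\alpha d^k$. So your plan can be made to work, but only after proving this auxiliary moment lemma; the paper's weighted-majority route sidesteps the issue entirely because the current-flow weights already compensate for irregular subtree sizes.
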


Note that the proposition implies that $Y$ satisfies part $3$ of
Assumption~\ref{ass:gw-Y}.

In the regular case, the proof of Lemma~\ref{lem:large-expected-mag}
was based on the fact that a simple
majority vote at the leaves estimates the root well. Here, we will
follow Evans et~al. \cite{EvKePeSc00} by using a weighted majority vote.
For this, we will need to use the terminology of electrical networks,
in particular
the notion of effective conductance and effective resistance.
An \hyperref[sec1]{Introduction} to these concepts may be found
in~\cite{Lyons13}; the essential properties that we will need are that
conductances add over parallel paths, while resistances add over consecutive
paths.

Put a resistance of $(1-\theta^2) \theta^{-2k}$ on each
edge $e$ in $T$ whose child is in generation $k$ (where $\rho$ is
generation zero).
We write $\cond(k)$ for the
effective conductance between $\rho$ and level $k$ and
$\res(k)$ for $1/\cond(k)$.
Also, attach an additional ``noisy'' node to each node at level $k$, with
resistance $4 \delta(1-\delta) (1-2\delta)^{-2} \theta^{-2k}$; then
let $\mathscr{C}'_{\mathrm{eff}}(k)$ be
the effective conductance between the root and these nodes and let
$\mathscr{R}'_{\mathrm{eff}}(k) = 1/\mathscr{C}'_{\mathrm{eff}}(k)$.
Note that $\cond(k)$ and $\mathscr{C}'_{\mathrm{eff}}(k)$ are random
quantities which
depend on the
Galton--Watson tree.
The importance of $\cond$ and $\mathscr{C}'_{\mathrm{eff}}$ for
estimating $\sigma_\rho$ was
shown by~\cite{EvKePeSc00} (Lemma 5.1).

%
\begin{theorem}\label{teo:weighted-maj}
There exist weights $w(u)$ such that
if $R_k = \sum_{v \in L_k(\rho)} w(v) \sigma_v$ and
$S_k = (1-2\delta)^{-1} \sum_{v \in L_k(\rho)} w(v) \tau_v$ then
\begin{eqnarray*}
\E(R_k \mid\sigma_\rho) &=& \sigma_\rho,
\\
\E(S_k \mid\sigma_\rho) &=& \sigma_\rho,
\\
\Var(R_k \mid\sigma_\rho) &=& \res(k),
\\
\Var(S_k \mid\sigma_\rho) &=& \mathscr{R}'_{\mathrm{eff}}(k).
\end{eqnarray*}
\end{theorem}

We mention that $w(v)$ in Theorem~\ref{teo:weighted-maj}
is proportional to the unit current flow from $\rho$ to $v$; for our work,
however, we only need to know that it exists and that it can be easily computed.

Consider the estimators $\sgn(R_k)$ and $\sgn(S_k)$ for $\sigma_\rho
$. By
Chebyshev's inequality,
\[
\Pr(S_k \le0 \mid\sigma_\rho= +) \le\Var(S_k)
= \res(k) = \frac{1}{\cond(k)}
\]
and similarly $\Pr(R_k \le0 \mid\sigma_\rho= +) \le1/\mathscr
{C}'_{\mathrm{eff}}(k)$.
In particular, if we can show that $\cond(k)$ and $\mathscr
{C}'_{\mathrm{eff}}(k)$ are
large, we
will have shown that $\sgn(S_k)$ and $\sgn(R_k)$ are good estimators
of $\sigma_\rho$. Since $\sgn(X_{k,\rho})$ and $\sgn(Y_{k,\rho})$ are
the optimal estimators of $\sigma_\rho$ given, respectively, $\sigma
_{L_k(\rho)}$
and $\tau_{L_k(\rho)}$, this will prove that $x_k$ and $y_k$ are large.
Note that this is exactly the same method that we used to show that
$x_k$ and $y_k$ are large in the $d$-regular case; the difference here is
that we need to consider a weighted linear estimator instead of
an unweighted one.

%
\begin{lemma}\label{lem:conductance}
There is a universal constant $c > 0$ such that
for all $k \ge K(\theta, d, \delta)$,
\begin{eqnarray*}
\Pr\biggl(\cond(k) \ge\frac{\theta^2 d}{16\eta} \biggr) &\ge& e^{-cd},
\\
\Pr\biggl(\mathscr{C}'_{\mathrm{eff}}(k) \ge\frac{\theta^2
d}{16\eta}
\biggr) &\ge& e^{-cd}.
\end{eqnarray*}
\end{lemma}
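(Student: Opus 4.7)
The plan is to exploit the series--parallel structure of effective conductance on the Galton--Watson tree and to propagate a lower bound on $\cond(k)$ by induction on $k$, using Poisson and binomial concentration to absorb the failure probability at each step. I describe the argument for $\cond(k)$; the noisy quantity $\ncond(k)$ satisfies a recursion of exactly the same form and the same argument applies once $k$ is large enough depending on $\delta$.

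Writing $C_k := \cond(k)$, the key observation is the distributional recursion
\[
C_k \eqD \theta^2 \sum_{i=1}^{D} \frac{C_{k-1}^{(i)}}{1 + (1-\theta^2)\, C_{k-1}^{(i)}},
\]
where $D \sim \Pois(d)$ and the $C_{k-1}^{(i)}$ are independent copies of $C_{k-1}$, independent of $D$. This follows from the parallel rule at $\rho$, the series rule along each initial edge, and the fact that every resistance inside a subtree rooted at a child of $\rho$ is rescaled by the factor $\theta^{-2}$ relative to an independent copy of the whole tree; hence the conductance of such a subtree to its deepest level equals $\theta^2$ times an independent copy of $C_{k-1}$.

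Set $T := \theta^2 d/(16\eta)$. Since $f(x) := x/(1+(1-\theta^2)x)$ is increasing and $(1-\theta^2)T = 4\eta(1-\eta)T \le \theta^2 d/4$, a direct calculation shows that when $\theta^2 d$ exceeds a universal constant $C_0$, the condition $C_{k-1}^{(i)} \ge T$ holding for at least $d/3$ of the children already forces $C_k \ge T$. The inductive step is then straightforward: by a Chernoff bound for Poisson, $D \ge d/2$ with probability at least $1 - e^{-d/8}$, and conditional on $D$ and on the inductive hypothesis $\Pr(C_{k-1} \ge T) \ge 1 - e^{-cd}$, the number of children with $C_{k-1}^{(i)} < T$ is stochastically dominated by $\Binom(D, e^{-cd})$. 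A standard Chernoff estimate then produces at least $2D/3 \ge d/3$ good children with probability far exceeding $1 - e^{-cd}$, yielding $\Pr(C_k \ge T) \ge 1 - e^{-cd}$, provided $c$ has been chosen small and $C_0$ correspondingly large. The base case $k=1$ is immediate: $C_1 = D\theta^2/(1-\theta^2)$, so $C_1 \ge T$ whenever $D \ge (1-\eta)d/4$, an event of probability at least $1 - e^{-cd}$ by Poisson concentration.

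For $\ncond(k)$ the same scaling argument applies: the boundary noise resistances scale as $\theta^{-2k}$, rescale consistently with the edge resistances, and $\ncond(k)$ satisfies the identical distributional recursion. The only new issue is that when $\delta$ is close to $1/2$ the added noise resistance drives $\ncond(1)$ near zero; this is handled by observing that $f(x) \approx x$ for small $x$, so that $\ncond(k)$ grows in expectation by roughly a factor $\theta^2 d$ per step until it reaches the regime where the deterministic implication $m \ge d/3 \Rightarrow \ncond(k) \ge T$ is available. After $K(\theta,d,\delta)$ iterations the distribution of $\ncond(k)$ enters that regime, and the induction above takes over. The main obstacle in implementing this plan is arranging that the probability loss at each inductive step is absorbed into the same $e^{-cd}$ bound rather than degrading the exponent; this is precisely where the requirement $\theta^2 d \ge C_0$ enters, as the slack in the deterministic implication must be large enough to tolerate the $e^{-cd}$ tail of the Poisson/binomial concentration.
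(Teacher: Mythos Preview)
Your argument is essentially the same as the paper's: both exploit the series--parallel recursion for the conductance and use Poisson concentration on the number of ``good'' children. The paper, however, organises the induction by tracking the \emph{median} $\alpha_k = \min\{(4\eta)^{-1}, \text{median of } C_k\}$ rather than the tail probability $\Pr(C_k \ge T)$: it shows that each step multiplies $\alpha_{k-1}$ by at least $\theta^2 d/4$ until the cap $(4\eta)^{-1}$ is reached, and then a single further step yields $\Pr(C_k \ge \theta^2 d/(16\eta)) \ge 1-e^{-cd}$.

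For $\cond$ your direct induction is fine, since the base case $C_1 = D\theta^2/(1-\theta^2)$ already satisfies $\Pr(C_1 \ge T) \ge 1-e^{-cd}$. For $\ncond$, though, your bootstrapping step is underspecified: saying that $\ncond(k)$ ``grows in expectation by roughly a factor $\theta^2 d$'' does not by itself produce the hypothesis $\Pr(\ncond(K) \ge T) \ge 1-e^{-cd}$ that your induction needs to start. You would still need to convert expectation growth into a lower tail bound, and that is precisely what the median device accomplishes without extra work: once $\Pr(C_{k-1}\ge s)\ge 1/2$ for some $s\le(4\eta)^{-1}$, Poisson thinning gives $\Pr(C_k \ge \theta^2 d s/8) \ge 1-e^{-cd}\ge 1/2$, so the median climbs geometrically and the final step delivers the full $1-e^{-cd}$ bound. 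Either adopt the median formulation for the $\ncond$ bootstrapping, or note that your inductive step actually only requires $\Pr(C_{k-1}\ge T)\ge 1/2$ (not $1-e^{-cd}$) and run the same geometric-growth argument on the median to reach that weaker starting point.
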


\begin{pf}
The proof is by a recursive argument.
Note that $\cond(0) = \infty$ and $\mathscr{C}'_{\mathrm{eff}}(0) =
(4 \delta(1-\delta
))^{-1} (1-2\delta)^{-2} > 0$.
We will write the rest of the proof only for $\cond$, but the same
argument holds with $\mathscr{C}'_{\mathrm{eff}}$ replacing $\cond$
everywhere.
Let $\alpha_{k-1} = \min\{(4\eta)^{-1}, M\}$ where $M$ is the
largest median of $\cond(k-1)$
[in the case of $\cond(0)$, $M$ is any positive value].
Now fix $k$ and let $Z_1, Z_2, \dots$ be independent copies of $\cond(k-1)$.
Then $\Pr(Z_i \ge\alpha_{k-1}) \ge1/2$ for all $i$.

Now, the first $k$ levels of a Galton--Watson tree consist of a root with
$\Pois(d)$ independent subtrees of $k-1$ levels each. For each child
$i$, the conductance
between $i$ and $L_{k-1}(i)$ is distributed like $\theta^{2} Z_i$ (the
factor $\theta^2$
arises because at each level of the tree the conductances are
multiplied by an extra factor
of $\theta^2$). Since the edge between $\rho$ and $i$ has conductance
$\theta^2 (1-\theta^2)^{-1}$,
the conductance between $\rho$ and $L_{k-1}(i)$ is distributed like
\[
\frac{1}{\theta^{-2} Z_i^{-1} + \theta^{-2} (1-\theta^2)} = \frac
{\theta^2 Z_i}{(1-\theta^2) Z_i + 1}.
\]
Summing over the children of $\rho$, we see that $\cond(k)$ has the
same distribution as
\[
\sum_{i=1}^{\Pois(d)} \frac{\theta^2 Z_i}{(1-\theta^2) Z_i + 1} \ge
\theta^2 \sum_{i=1}^{\Pois(d)}
\frac{Z_i}{4\eta Z_i+1}.
\]
Recall that $\Pr(Z_i \ge\alpha_{k-1}) \ge1/2$ and $\alpha_{k-1}
\le(4\eta)^{-1}$. Hence,
$\alpha_{k-1}/(4\eta\alpha_{k-1} + 1) \ge\alpha_{k-1}/2$, and so
\begin{eqnarray*}
\cond(k) &\ge&\theta^2 \sum_{i=1}^{\Pois(d)}
1_{\{Z_i \ge\alpha_{k-1}\}} \frac{\alpha_{k-1}}{4\eta\alpha_{k-1} + 1}
\\
&\ge&\frac{\theta^2}{2} \sum_{i=1}^{\Pois(d)}
1_{\{Z_i \ge\alpha
_{k-1}\}} \alpha_{k-1}
\\
&\ge&\frac{\theta^2\alpha_{k-1}}{2} \Pois(d/2).
\end{eqnarray*}
Now, there is a universal constant $c > 0$ such that $\Pr(\Pois(d/2)
\le d/4) \le e^{-c d}$; hence
%
%
\begin{equation}
\label{eq:large-conductance-1} \Pr\bigl(\cond(k) \le\theta^2 d
\alpha_{k-1}/4 \bigr) \le e^{-cd}.
\end{equation}
In particular, if $d$ is sufficiently large then $e^{-cd} < 1/2$, and hence
every median of $\cond(k)$ is larger than $\theta^2 d \alpha_{k-1}
/4$. In particular,
$\alpha_k \ge\min\{(4\eta)^{-1},\break \theta^2 d \alpha_{k-1} / 4\}$.
Hence, if $\theta^2 d > 4$ and $k$
is sufficiently large then $\alpha_k \ge(4\eta)^{-1}$.
Applying this to~\eqref{eq:large-conductance-1} completes the proof
for $\cond(k)$, and an identical
argument applies to $\mathscr{C}'_{\mathrm{eff}}(k)$.
\end{pf}

Now Proposition~\ref{prop:gw-large-expected-mag} follows directly
from Theorem~\ref{teo:weighted-maj} and Lemma~\ref{lem:conductance}.

\subsection{The small-\texorpdfstring{$\theta$}{$theta$} case}

The proof of Proposition~\ref{prop:large-d-recursion} extends fairly easily
to the Galton--Watson case. The weakening of Lemma~\ref{lem:large-expected-mag}
to Proposition~\ref{prop:gw-large-expected-mag} makes hardly any difference
because the proof of Proposition~\ref{prop:large-d-recursion} only
needed $x_k \ge1/2$.

%
\begin{proposition}\label{prop:large-d-recursion-gw}
Consider the broadcast process on a Poisson Galton--Watson tree. Then
there are absolute constants $C$ and $\theta^* > 0$ such that
if $d \theta^2 \ge C$ and $\theta\le\theta^*$ then for all
$k \ge K(\theta, d, \delta)$,
\[
\E(X_{\rho,k+1} - Y_{\rho,k+1})^2 \le\tfrac{1}2
\E(X_{\rho,k} - Y_{\rho,k})^2.
\]
\end{proposition}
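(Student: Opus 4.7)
The plan is to mirror the proof of Proposition~\ref{prop:large-d-recursion} essentially verbatim, with the single new twist that the degree of the root is random. By Proposition~\ref{prop:gw-large-expected-mag}, we have $x_k, y_k \ge 5/6$ once $\theta^2 d$ is large enough and $k \ge K(\theta,d,\delta)$, so Lemma~\ref{lem:exp-taylor-quotient} still applies and gives $m := \max\{\E(A_i^2\mid\sigma_\rho=+),\E(B_i^2\mid\sigma_\rho=+)\} \le 1 - \theta^2/5$, where as before $A_i = \bigl(\tfrac{1-\theta X_{\rho i,k}}{1+\theta X_{\rho i, k}}\bigr)^{1/4}$ and $B_i$ is the same with $Y$. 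Lemmas~\ref{lem:x^p} and~\ref{lem:A^2-B^2} make no use of the tree being regular, so they carry over.

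The new step is to condition on $D_\rho \sim \Pois(d)$. Given $\{D_\rho = n\}$, the $n$ subtrees hanging off $\rho$ are i.i.d.\ Galton--Watson trees, so conditionally the pairs $(A_i,B_i)$ are i.i.d.\ and Lemma~\ref{lem:expansion-of-square} yields
\[
 \E\bigl((X_{\rho,k+1} - Y_{\rho,k+1})^2 \mid \sigma_\rho=+, D_\rho = n\bigr)
 \le C\bigl(n^2 m^{n-2}(\E A^2 - \E B^2)^2 + n m^{n-1}\E(A-B)^2\bigr).
\]
Here the bound on $(\E A^2-\E B^2)^2$ from Lemma~\ref{lem:A^2-B^2} and on $\E(A-B)^2$ from the Lipschitz bound used in~\eqref{eq:(A-B)^2} are both of the form $(\text{const})\cdot\theta^4 \cdot z$ and $\theta^2 \cdot z$ respectively, where $z = \E((X_{\rho 1, k} - Y_{\rho 1, k})^2 \mid \sigma_{\rho 1} = +)$; crucially this factor $z$ is independent of the conditioning $\{D_\rho = n\}$.

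Now I take the expectation over $D_\rho$. Using the Poisson identities
\[
 \E[D_\rho z^{D_\rho - 1}] = d e^{-d(1-z)}, \qquad
 \E[D_\rho^2 z^{D_\rho - 2}] = (d^2 + d z^{-1}) e^{-d(1-z)},
\]
applied at $z = m \le 1 - \theta^2/5$ (so $z^{-1} \le 2$ for small $\theta^*$), the Poisson averages
\[
 \E[D_\rho^2 m^{D_\rho - 2}] \le C d^2 e^{-\theta^2 d/5}, \qquad
 \E[D_\rho m^{D_\rho - 1}] \le d e^{-\theta^2 d/5}
\]
replace the deterministic factors $d^2 m^{d-2}$ and $d m^{d-1}$ from the regular case. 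The $D_\rho = 0$ contribution is zero since then $X_{\rho,k+1} = Y_{\rho,k+1} = 0$. Assembling, one obtains the direct analogue of Lemma~\ref{lem:recursion-ratio},
\[
 \E\bigl((X_{\rho,k+1} - Y_{\rho,k+1})^2 \mid \sigma_\rho = +\bigr)
 \le C(d^2\theta^4 + d\theta^2) e^{-\theta^2 d/5}\, z,
\]
which is at most $\tfrac12 z$ once $d\theta^2$ exceeds some absolute constant, giving the claim after unconditioning on $\sigma_\rho$.

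The only genuinely new difficulty is controlling the random-degree averaging, and even that is not hard because the polynomial factors $n^2, n$ in Lemma~\ref{lem:expansion-of-square} are dominated by the geometric $m^n$ once expectations are taken; the Poisson mgf identities turn $n^2 m^{n-2}$ into $d^2 e^{-\theta^2 d/5}$, which is exactly what one would naively hope for. Consequently, the only real checking required is to confirm that Proposition~\ref{prop:gw-large-expected-mag} gives $x_k, y_k \ge 5/6$ for $\theta^2 d$ sufficiently large (which it does, since $\tfrac{8\eta}{\theta^2 d} + 2 e^{-cd} \to 0$ as $\theta^2 d \to \infty$), and to note that $Y$ satisfies parts 1 and 2 of Assumption~\ref{ass:gw-Y} by hypothesis. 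No separate treatment of part 3 is needed here because the small-$\theta$ recursion only uses $y_k \ge 5/6$, just as in the regular case.
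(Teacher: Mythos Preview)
Your proof is correct and follows essentially the same route as the paper: condition on the Poisson degree $D_\rho$, apply the recursion-ratio bound (which you re-derive via Lemma~\ref{lem:expansion-of-square}, while the paper invokes Lemma~\ref{lem:recursion-ratio} directly with $D$ in place of $d$), and then integrate out $D_\rho$. The only cosmetic difference is that the paper first absorbs the polynomial factors $D^2\theta^4 + D\theta^2$ into a weaker exponential $C' e^{-\theta^2 D/10}$ and then uses the Poisson moment generating function $\E e^{tD} = e^{d(e^t-1)}$, whereas you keep the polynomial factors and use the factorial-moment identities $\E[D z^{D-1}]$ and $\E[D^2 z^{D-2}]$ directly; both yield a bound of the form $C e^{-c\theta^2 d} z$.
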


\begin{pf}
Let $D$ be the number of children of $u$, so that $D \sim\Pois(d)$.
If $\theta^2 d$ is sufficiently large then Proposition~\ref
{prop:gw-large-expected-mag}
implies that $x_k, y_k \ge5/6$ and so applying Lemma~\ref{lem:recursion-ratio}
conditioned on $D$ yields
\begin{eqnarray*}
\E\bigl((X_{u,k+1} - Y_{u,k+1})^2 \mid D,
\sigma_u = + \bigr) &\le& C \bigl( D^2
\theta^4 + D \theta^2 \bigr) e^{-\sfrac{\theta^2 D}{5}} z
\\
&\le& C'e^{-\sfrac{\theta^2 D}{10}} z,
\end{eqnarray*}
where $z = \E((X_{u1,k} - Y_{u1,k})^2 \mid\sigma_{u1} = +)$.
Now we integrate out $D$. Since $D \sim\Pois(d)$, its moment
generating function
is $\E e^{t D} = e^{d(e^t - 1)}$. Setting $t = -\theta^2/10$,
we have $e^t \le1 + t/2$ for all $\theta\in[0, 1]$; hence,
\[
\E e^{tD} \le e^{td/2} = e^{-\sfrac{\theta^2 d}{20}}.
\]
That is,
\[
\E\bigl((X_{u,k+1} - Y_{u,k+1})^2 \mid
\sigma_u = + \bigr) \le C z \E e^{-\sfrac{\theta^2 D}{10}} \le C z
e^{-\sfrac{\theta^2 d}{20}}.
\]
In particular, the right-hand side is smaller than $z/2$ if $\theta^2 d$
is sufficiently large.
\end{pf}

\subsection{The large-\texorpdfstring{$\theta$}{$theta$} case}

We now give an analogue of Proposition~\ref{prop:large-theta} in the
Galton--Watson case.

%
\begin{proposition}\label{prop:large-theta-gw}
For any $0 < \theta^* < 1$, there is some $d^* = d^*(\theta^*)$ such
that for the broadcast process on the Poisson mean $d$ tree
it holds that for all $\theta\ge\theta^*$, $d \ge d^*$, and $k \ge
K(\theta, d, \delta)$,
\[
\E\sqrt{\llvert X_{\rho,k+1} - Y_{\rho,k+1}\rrvert} \le
\tfrac{1}2 \E\sqrt{\llvert X_{\rho,k} - Y_{\rho,k}\rrvert}.
\]
\end{proposition}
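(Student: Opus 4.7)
The plan is to follow the template of Proposition~\ref{prop:large-theta}, inserting a conditioning step on the number of children of $\rho$ to handle the Poisson randomness.

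\textbf{Step 1: Conditional bound on $D$.} Let $D$ denote the number of children of $\rho$, so $D \sim \Pois(d)$, and condition on $D$. Given $D$, the subtrees rooted at the children of $\rho$ are i.i.d.\ Poisson Galton-Watson trees. The mean value theorem bound \eqref{eq:mvt-g} holds pointwise on any realization of the tree, and the independence of $m_i(X_{L_1(\rho),k})$ from the $i$-th subtree is preserved, so \eqref{eq:bound-by-m} still applies. This yields
\[
\E\bigl(\sqrt{|X_{\rho,k+1}-Y_{\rho,k+1}|}\bigm|\sigma_\rho=+,D\bigr) \le D\cdot M_D\cdot \E\bigl(\sqrt{|X_{\rho 1,k}-Y_{\rho 1,k}|}\bigm|\sigma_{\rho 1}=+\bigr),
\]
where $M_D$ bounds $\E(\sqrt{\max\{m_i(X_{L_1(\rho),k}),m_i(Y_{L_1(\rho),k})\}}\mid\sigma_\rho=+,D)$ and factors as a product over the $D-1$ independent siblings.

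\textbf{Step 2: Poisson analog of Lemma~\ref{lem:expectation-sqrt}.} I would establish that for $\theta\ge\theta^*$, $d\ge d^*(\theta^*)$, and $k\ge K(\theta,d,\delta)$,
\[
\E\left(\sqrt{\tfrac{1-\theta X_{\rho 1,k}}{1+\theta X_{\rho 1,k}}}\,\Bigm|\,\sigma_{\rho 1}=+\right)\le \lambda(\theta^*) < 1,
\]
and the analogous bound for $Y$. The argument splits on whether the subtree $T$ at $\rho 1$ is \emph{good} in the sense of Proposition~\ref{prop:gw-large-expected-mag} (resp.\ part~3 of Assumption~\ref{ass:gw-Y} for $Y$). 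On the good event, which has probability at least $1-e^{-cd}$, the conditional expectation $\E(X_{\rho 1,k}\mid T,\sigma_{\rho 1}=+)$ is close to $1$, so the appendix proof of Lemma~\ref{lem:expectation-sqrt} goes through verbatim and yields the bound $\lambda(\theta^*)$. On the bad event, the integrand is deterministically bounded by $1/\sqrt{\eta}$ using $(1-\theta x)/(1+\theta x)\le 1/\eta$ for $x\in[-1,1]$; its contribution of $e^{-cd}/\sqrt{\eta}$ is absorbed into the $1-\lambda(\theta^*)$ slack once $d^*$ is large enough relative to $\theta^*$.

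\textbf{Step 3: Integrate out $D$.} Step 2 combined with independence across siblings gives $M_D\le C\lambda^{D-O(1)}$, together with the trivial bound $M_D\le 1$. Integrating over $D\sim\Pois(d)$ and using the Poisson identities $\E[\lambda^D]=e^{-(1-\lambda)d}$ and $\E[D\lambda^D]=d\lambda e^{-(1-\lambda)d}$, the prefactor $\E[D M_D]$ decays exponentially in $d$. Choosing $d^*(\theta^*)$ large enough makes this prefactor at most $1/2$, which, after removing the conditioning on $D$, delivers the required contraction.

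\textbf{Main obstacle.} The principal difficulty lies in Step~2: unlike for a $d$-regular tree, a Poisson subtree can, with small probability, fail to propagate information well (e.g., by going extinct or being unusually thin), so there is no deterministic lower bound on $\E(X_{\rho 1,k}\mid\sigma_{\rho 1}=+,T)$. Proposition~\ref{prop:gw-large-expected-mag} supplies precisely the needed tail $e^{-cd}$ on the bad-tree event, but one must verify that this tail is compatible with both the per-factor good-event bound $\lambda(\theta^*)$ and the blow-up $1/\sqrt{\eta}$ of the integrand on the bad event, uniformly over $\theta\in[\theta^*,1)$. After that technical check, the integration over $D$ is essentially the same computation that appeared in Proposition~\ref{prop:large-d-recursion-gw}.
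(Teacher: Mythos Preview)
Your proposal has a genuine gap that prevents the regular-tree argument from transferring directly. The problem is the prefactor $\eta^{-1}$ in $\sqrt{m_i}$ (recall from~\eqref{eq:mi-def} that $m_i(x)=\eta^{-2}\prod_{j\ne i}\frac{1-\theta x_j}{1+\theta x_j}$), which in the $d$-ary case is killed by the $4\eta^{1/4}$ part of Lemma~\ref{lem:expectation-sqrt}. In the Galton--Watson setting no per-factor bound that vanishes with $\eta$ is available: with probability at least the extinction probability of $\Pois(d)$ (which depends on $d$ but not on $\eta$) the subtree at $\rho 1$ dies and $X_{\rho 1,k}=0$, forcing $\sqrt{(1-\theta X)/(1+\theta X)}=1$. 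Hence the best per-factor bound is a constant $\lambda(\theta^*)<1$ (this is precisely Lemma~\ref{lem:expectation-sqrt-gw}), and your Step~3 claim $M_D\le C\lambda^{D-O(1)}$ is false: you only get $M_D\le 2\eta^{-1}\lambda^{D-1}$, and after integrating out $D\sim\Pois(d)$ the resulting $\eta^{-1}d\,e^{-(1-\lambda)d}$ is not uniformly small over $\theta\in[\theta^*,1)$ for any fixed $d^*(\theta^*)$. The ``trivial bound $M_D\le 1$'' you invoke does not hold either, since $h_i$ itself can be of order $\eta^{-1}$. Your Step~2 has the same non-uniformity: the bad-tree contribution $e^{-cd}/\sqrt\eta$ cannot be absorbed into a slack depending only on $\theta^*$, so the ``technical check'' you defer actually fails.

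The paper circumvents this by abandoning the crude bound $h_i^+\le m_i$ and instead splitting on the event $\Omega=\{\,|X_{ui,k}-Y_{ui,k}|\le\epsilon\text{ for all }i\,\}$ with $\epsilon=d^{-4}$. On $\Omega^c$ a direct argument (Lemma~\ref{lem:far-apart}) controls the contribution. On $\Omega$, the key new idea is to condition on $X_{ui,k},Y_{ui,k}$ and exploit the fact that when these are close to $+1$ the posterior on $\sigma_u$ is concentrated on~$+$; one may then use $h_i^+$ (whose denominator $(1+\theta X_{ui,k})^2$ is harmless) rather than $h_i^-$, while the residual event $\{\sigma_u=-\}$ has probability $O(\eta+|1-X_{ui,k}|)$, which cancels the dangerous $\eta^{-1}$ in $h_i^-$ (Lemma~\ref{lem:close-together-near-1}). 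This coupling between the value of $X_{ui,k}$ and the likely sign of $\sigma_u$ is the missing ingredient in your approach.
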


This completes the proof of Theorem~\ref{teo:gw-main} (by the same
argument that followed
Proposition~\ref{prop:large-theta}).

\subsubsection{The case where one child has large error}

Our eventual goal is to prove Proposition~\ref{prop:large-theta} by
a similar analysis of the partial derivatives of $g$ that led to the
proof of Proposition~\ref{prop:large-theta}. In this section, however,
we will deal with one case where the derivatives of $g$ cannot be
controlled well. First, we introduce a parameter $\varepsilon= \varepsilon
(d) > 0$
that will be specified later. Next, fix a vertex $u$ and let $\Omega$
be the event that
all children $i$ of $u$ satisfy $\llvert X_{ui,k} - Y_{ui,k}\rrvert
\le\varepsilon$.
On $\Omega$, we will analyze derivatives of $g$; off $\Omega$ we have
the following
lemma (recalling that $D$ is the number of children of $u$).
%
%
\begin{lemma}\label{lem:far-apart}
For any $0 < \theta^* < 1$, there exist $c, C > 0$ such that
if $\eta< c$, $\theta\in[\theta^*, 1)$, and $\theta^2 d > C$
then for any $\varepsilon> 0$ and $k \ge K(\theta, d, \delta)$
\[
\E\bigl(\sqrt{\llvert X_{u,k+1} - Y_{u,k+1}\rrvert}
1_{\Omega^c} \mid D\bigr) \le\frac{C}{\sqrt{\varepsilon}} D e^{-cD} \E
\sqrt{\llvert X_{ui,k} - Y_{ui,k}\rrvert} 1_{\{\llvert X_{ui,k} -
Y_{ui,k}\rrvert > \varepsilon
\}}.
\]
\end{lemma}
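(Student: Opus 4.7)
The plan is to decompose $1_{\Omega^c}$ over children and, for each ``bad'' child $i$, use the concentration of the other $D-1$ children to force $X_{u,k+1}$ and $Y_{u,k+1}$ both to be within $O(e^{-cD})$ of $\sigma_u$, so that each term is damped by an $e^{-cD}$ factor. Since on $\Omega^c$ some child has $|X_{ui,k}-Y_{ui,k}|>\epsilon$, we have pointwise
\[
 1_{\Omega^c} \leq \sum_{i=1}^D 1_{\{|X_{ui,k}-Y_{ui,k}|>\epsilon\}} \leq \frac{1}{\sqrt\epsilon}\sum_{i=1}^D Z_i, \qquad Z_i := \sqrt{|X_{ui,k}-Y_{ui,k}|}\,1_{\{|X_{ui,k}-Y_{ui,k}|>\epsilon\}}.
\]
Since the $Z_i$ are identically distributed given $D$, it suffices to establish, for a fixed $i$, the per-child bound $\E\bigl[\sqrt{|X_{u,k+1}-Y_{u,k+1}|}\,Z_i\,\big|\,D\bigr] \leq C e^{-cD}\E Z_i$, then sum over $i$ and divide by $\sqrt\epsilon$.

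Next I exploit the product-to-sum identity $g(x_1,\dots,x_d) = \tanh\bigl(\sum_j \tanh^{-1}(\theta x_j)\bigr)$, giving $X_{u,k+1}=\tanh(s_X)$ and $Y_{u,k+1}=\tanh(s_Y)$ with $s_X = \sum_{j\in\C(u)} \tanh^{-1}(\theta X_{uj,k})$ and analogously for $s_Y$. Writing $s_X^{(i)} = s_X - \tanh^{-1}(\theta X_{ui,k})$, the single-child contribution is bounded by $T := \tanh^{-1}\theta$, and $s_X^{(i)}, s_Y^{(i)}$ depend only on the subtrees at $\{uj:j\neq i\}$, so are independent of $Z_i$ given $(D,\sigma_u)$. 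Condition on $\sigma_u=+$; the other case is symmetric by Assumption~\ref{ass:gw-Y}(2).

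The key probabilistic input is that the i.i.d.\ summands $\tanh^{-1}(\theta X_{uj,k})\in[-T,T]$ satisfy $\mu := \E[\tanh^{-1}(\theta X_{uj,k})\mid \sigma_u=+] \geq c_1 T$ for a constant $c_1=c_1(\theta^*)>0$. To prove this, I would apply Proposition~\ref{prop:gw-large-expected-mag} combined with Markov's inequality at threshold $\alpha = \sqrt\eta$: on ``good'' trees (probability at least $1-e^{-cd}$) we have $X_{uj,k}\geq 1-\sqrt\eta$ except on an event of conditional probability $O(\sqrt\eta/(\theta^2 d))$, and a direct computation shows $\tanh^{-1}(\theta(1-\sqrt\eta)) \geq T/2$ for $\eta$ small, whence $\mu \geq T/8$ once $\eta<c$ and $\theta^2 d>C$. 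The same lower bound applies to $s_Y^{(i)}$ via Assumption~\ref{ass:gw-Y}(3) together with the $Y$-analogue of Lemma~\ref{lem:large-mag}. Hoeffding's inequality applied to $s_X^{(i)}, s_Y^{(i)}$ (summands in $[-T,T]$, mean $\geq c_1 T$) then yields
\[
 \Pr(E^c \mid D,\sigma_u=+)\leq e^{-cD}, \qquad E := \{s_X^{(i)}\geq c_1 T D/2\}\cap\{s_Y^{(i)}\geq c_1 T D/2\},
\]
at rate $c = c_1^{2}/8$, uniform in the admissible parameters.

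On $E$ we have $s_X, s_Y \geq c_1 T D/2 - T$, and $1-\tanh(t)\leq 2e^{-2t}$ gives $1-X_{u,k+1}, 1-Y_{u,k+1}\leq 2\exp(-T(c_1 D - 2))$, so $|X_{u,k+1}-Y_{u,k+1}|\leq C'e^{-c'D}$ once $D\geq 4/c_1$; off $E$ we use the trivial bound $|X_{u,k+1}-Y_{u,k+1}|\leq 2$. Combining and using the independence of $E$ and $Z_i$ given $(D,\sigma_u)$,
\[
 \E[\sqrt{|X_{u,k+1}-Y_{u,k+1}|}\,Z_i \mid D,\sigma_u=+] \leq \bigl(\sqrt{C'}\,e^{-c'D/2} + \sqrt 2\,e^{-cD}\bigr)\E Z_i,
\]
which after averaging over $\sigma_u$, summing over $i$, and multiplying by $1/\sqrt\epsilon$ yields the lemma. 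The main obstacle is precisely the lower bound $\mu \geq c_1 T$: because $T = \tanh^{-1}\theta$ can be arbitrarily large as $\eta\to 0$, showing $\mu > 0$ is not enough, since Hoeffding with summands ranging over $[-T,T]$ would then give only an $\exp(-cD/T^2)$ rate whose exponent degenerates as $\eta\to 0$. The saving is that when $\eta$ is small, $X_{uj,k}$ concentrates within $O(\sqrt\eta)$ of $+1$ by Proposition~\ref{prop:gw-large-expected-mag}, and $\tanh^{-1}(\theta X_{uj,k})$ is then a constant fraction of $T$ itself.
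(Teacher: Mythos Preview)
Your argument is correct and follows essentially the same route as the paper's proof: decompose $1_{\Omega^c}$ as a sum over children, exploit exchangeability to reduce to a single bad child $i$, use the independence of the other $D-1$ subtrees to show via Hoeffding that both $X_{u,k+1}$ and $Y_{u,k+1}$ lie within $e^{-cD}$ of $\sigma_u$ except with probability $e^{-cD}$, and finally insert the factor $\epsilon^{-1/2}\sqrt{|X_{ui,k}-Y_{ui,k}|}$ on the bad event. The only cosmetic difference is that you work in the additive $\tanh$ representation and apply Hoeffding to the sum $s_X^{(i)}=\sum_{j\ne i}\tanh^{-1}(\theta X_{uj,k})$, whereas the paper works directly with the product $A=\prod_j\frac{1-\theta X_{uj,k}}{1+\theta X_{uj,k}}$ and applies Hoeffding to the \emph{count} of children with $X_{uj,k}\ge 1-\eta$; since $s_X=-\tfrac12\log A$, these are two equivalent bookkeeping choices leading to the same $e^{-cD}$ bound.
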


\begin{pf}
First, we condition on $D$; we may then write
\[
1_{\Omega^c} \le\sum_{i=1}^D 1_{\{\llvert X_{ui,k} - Y_{ui,k}\rrvert
> \varepsilon
\}}.
\]
Hence,
\begin{eqnarray*}
&& \E\bigl(\sqrt{\llvert X_{u,k+1} - Y_{u,k+1}\rrvert}
1_{\Omega^c} \mid D\bigr)
\\
&&\qquad \le \E\Biggl(\sum_{i=1}^D
\sqrt{\llvert X_{u,k+1} - Y_{u,k+1}\rrvert} 1_{\{
\llvert X_{ui,k} - Y_{ui,k}\rrvert > \varepsilon\}}
\Bigm| D \Biggr)
\\
&&\qquad = D \E\bigl(\sqrt{\llvert X_{u,k+1} - Y_{u,k+1}\rrvert}
1_{\{\llvert X_{ui,k} - Y_{ui,k}\rrvert >
\varepsilon\}} \mid D\bigr),
\end{eqnarray*}
where the equality follows because all the terms in the sum have the
same distribution.
Now we will condition on $X_{ui,k}$ and $Y_{ui,k}$, and we will show that
on the event $\{\llvert X_{ui,k} Y_{ui,k} \rrvert \ge\varepsilon\}$ we have
%
%
\begin{equation}
\label{eq:far-apart-goal} D \E\bigl(\sqrt{\llvert X_{u,k+1} - Y_{u,k+1}
\rrvert} \mid D, X_{ui,k}, Y_{ui,k}\bigr) \le C D
e^{-c D}.
\end{equation}
After bounding
$1 \le\varepsilon^{-1/2} \sqrt{\llvert X_{ui,k} - Y_{ui,k}\rrvert }$
on the event $\{\llvert X_{ui,k} Y_{ui,k} \rrvert \ge\varepsilon\}$
and then integrating out $X_{ui,k}$ and $Y_{ui,k}$, the proof will be complete.

Now we prove~\eqref{eq:far-apart-goal}.
Condition on $\sigma_u$, and suppose without loss of generality that
$\sigma_u = +$.
If $\theta^2 d$ is sufficiently large then
Proposition~\ref{prop:gw-large-expected-mag} implies that (conditioned on
$\sigma_u=+$) every child $j \ne i$ of $u$ independently satisfies
\[
\Pr(X_{uj,k} \ge1 - \eta\mid\sigma_u=+) \ge7/8.
\]
If we condition also on $D$, Hoeffding's inequality implies that there is
a constant $c > 0$ such that with
probability at least $e^{-cD^2}$, at least $3/4$ of $u$'s children $j$
satisfy $X_{uj,k} \ge1 - \eta$. The remaining children (which possibly
include $i$) satisfy $X_{uj,k} \ge-1$, and so on this event
\[
A:= \prod_{j=1}^D \frac{1-\theta X_{uj,k}}{1+\theta X_{uj,k}}
\le\biggl(\frac{1-\theta(1-\eta)}{1+\theta(1-\eta)} \biggr)^{3D/4}
\biggl(\frac{1+\theta}{1-\theta}
\biggr)^{D/4} \le(3\eta)^{3D/4} \eta^{-D/4}.
\]
Now, $X_{u,k+1} = \frac{1-A}{1+A} \ge1-2A$, and so we conclude that
\[
\Pr\bigl(X_{u,k+1} \ge1 - 2 \cdot3^{3D/4} \eta^{D/2}
\mid X_{ui,k}, Y_{ui,k}, \sigma_u=+, D \bigr) \ge1
- e^{-c D^2}.
\]
The previous argument applies equally well with $X$ replaced by $Y$; hence,
the union bound implies
\[
\Pr\bigl(\llvert X_{u,k+1} - Y_{u,k+1}\rrvert\ge4
\cdot3^{3D/4} \eta^{D/2} \mid X_{ui,k},
Y_{ui,k}, \sigma_u=+, D \bigr) \ge1 - 2 e^{-c D^2}.
\]
On the other hand, we always have the bound $\llvert X_{u,k+1} -
Y_{u,k+1}\rrvert
\le2$,
and so
\[
\E\bigl(\sqrt{\llvert X_{u,k+1} - Y_{u,k+1}\rrvert} \mid
X_{ui,k}, Y_{ui,k}, \sigma_u=+, D\bigr) \le2
\cdot3^{3D/8} \eta^{D/4} + 2 \sqrt2 e^{-c D^2}.
\]
Now, if $\eta< c$ for $c$ sufficiently small, the right-hand side is bounded
by $C e^{-cD}$. This proves~\eqref{eq:far-apart-goal} in the case that
$\sigma_u=+$.
To complete the proof, we apply the symmetric argument conditioned on
$\sigma_u = -$.
\end{pf}

\subsubsection{An analogue of Lemma~\texorpdfstring{\protect\ref{lem:expectation-sqrt}}{3.16}}

The proof of Proposition~\ref{prop:large-theta-gw} proceeds by
analysing the derivatives
of the recurrence~\eqref{eq:g-def}. Recalling that these derivatives
involve a large
product, an important ingredient in the analysis is a bound on the expectation
of each term. The following lemma is analogous to Lemma~\ref
{lem:expectation-sqrt}
in the regular case; an important difference is that Lemma~\ref
{lem:expectation-sqrt-gw}
does not improve as $\eta\to0$. In fact, as we remarked after
Assumption~\ref{ass:gw-Y},
we cannot expect such behavior because of the possibility of extinction.

%
\begin{lemma}\label{lem:expectation-sqrt-gw}
For any $0 < \theta^* < 1$, there are some $\lambda= \lambda(\theta
^*) < 1$
and $d^* = d^*(\theta^*)$ such that
for all $\theta\ge\theta^*$, $d \ge d^*$ and $k \ge K(\theta, d,
\delta)$,
\[
\E\biggl(\sqrt{\frac{1-\theta X_{ui,k}}{1+\theta X_{ui,k}}} \Bigm| \sigma
_u = + \biggr) \le
\lambda.
\]
The same holds with $Y$ replacing $X$.
\end{lemma}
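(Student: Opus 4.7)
My plan is to adapt the proof of the regular-tree analogue (Lemma \ref{lem:expectation-sqrt}, whose details are deferred to the appendix) to the Galton--Watson setting. The two modifications needed are that Proposition \ref{prop:gw-large-expected-mag} now only controls $\E^+(X_{\rho,k}\mid T)$ on a high-probability event over $T$ rather than deterministically, and that the tree may go extinct (in which case $X_{\rho,k}=0$ and $\phi(X_{\rho,k})=1$, where I write $\phi(x):=\sqrt{(1-\theta x)/(1+\theta x)}$).

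The starting point is the elementary split
\[
\E^+\phi(X_{\rho,k}) \le \mu_* + \sqrt{(1-\eta)/\eta}\,\Pr^+(X_{\rho,k} < 1/2),
\]
where $\mu_* := \sqrt{(1-\theta^*/2)/(1+\theta^*/2)} < 1$. This uses that $\phi$ is monotone decreasing with $\phi(1/2) \le \mu_*$ whenever $\theta \ge \theta^*$, together with the uniform bound $\phi(x) \le \sqrt{(1-\eta)/\eta}$. Combining Markov's inequality applied to $1-X_{\rho,k}\ge 0$ with Proposition \ref{prop:gw-large-expected-mag} yields $\Pr^+(X_{\rho,k} < 1/2) \le 2(1-x_k) \le 16\eta/(\theta^2 d) + 4e^{-cd}$, so the right hand side is bounded by $\mu_* + 16\sqrt\eta/(\theta^2 d) + 4e^{-cd}/\sqrt\eta$.

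When $\eta$ is bounded below by a constant $\eta_0 = \eta_0(\theta^*)$, this is already enough: once $d \ge d^*(\theta^*)$ is large enough, the last two terms are each at most $(1-\mu_*)/4$, giving a uniform bound of $\lambda := (1+\mu_*)/2 < 1$. When $\eta$ is very small, the $e^{-cd}/\sqrt\eta$ term is problematic, and I would refine the argument by taking the threshold in the split to depend on $\eta$, for instance $\beta = 2\sqrt\eta$, so that $\phi(1-\beta)$ is of order $\eta^{1/4}$ and hence small, while Markov still gives good control on $\Pr^+(X_{\rho,k} < 1-\beta)$ on good trees. Bad trees can be handled separately by noting that on the extinction event $X_{\rho,k}=0$ and $\phi(X_{\rho,k})=1$, contributing only $1$ times the extinction probability $\le e^{-c'd}$.

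The main obstacle will be the intermediate regime $\eta \ll e^{-cd}$, in which the naive bad-tree contribution $e^{-cd}\sqrt{1/\eta}$ is too large but $\eta$ is too small for the Markov-type concentration of $X_{\rho,k}$ near $+1$ to kick in at the rate needed. Overcoming this requires either a refinement of Proposition \ref{prop:gw-large-expected-mag} in the small-$\eta$ regime, or a separate argument using that for very small $\eta$ the broadcast process is nearly deterministic and producing $X_{\rho,k}$ far from $+1$ requires a large-deviation event in the labels $\sigma$ themselves (rather than merely in the tree structure).
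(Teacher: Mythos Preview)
Your diagnosis is accurate, and your argument for the regime $\eta \ge \eta_0(\theta^*)$ essentially matches the paper's. The gap you flag in the small-$\eta$ regime is genuine, and the missing ingredient is Lemma~\ref{lem:wrong-sign}: for $\theta^2 d \ge C$ and $k$ large, $\Pr(X_{u,k} < 0 \mid \sigma_u = +) \le \eta$ (and likewise for $Y$). This is established by a separate recursive bootstrap on $p_k := \Pr^+(X_{\rho,k} < 0)$, showing $p_{k+1} \le \max\{\eta/2, p_k/2\}$ directly from the recursion~\eqref{eq:mag-recurrence} together with Proposition~\ref{prop:gw-large-expected-mag} and Hoeffding's inequality. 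That bound is exactly what tames the factor $\phi(-1)=\sqrt{(1-\eta)/\eta}$: the paper replaces your two-level split by the three-level decomposition $\{X \ge 1-\epsilon\} \cup \{0 \le X < 1-\epsilon\} \cup \{X < 0\}$, so that the middle region costs at most $\phi(0)=1$ times its (small) probability, while $\{X<0\}$ contributes at most $2\eta \cdot \phi(-1) = 2\sqrt{\eta(1-\eta)}$, which is uniformly small in $\eta$.

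Two comments on the details of your sketch. First, your handling of bad trees only covers extinction; non-extinct trees in the exceptional set of Proposition~\ref{prop:gw-large-expected-mag} can have $X_{\rho,k}$ well below zero with positive conditional probability, and on that event $\phi$ is of order $\eta^{-1/2}$, so you cannot bound the bad-tree contribution by $e^{-cd}\cdot 1$. Second, your closing heuristic --- that driving $X_{\rho,k}$ negative under $\sigma_\rho=+$ should require a large-deviation event in the labels $\sigma$ --- is the right intuition, and Lemma~\ref{lem:wrong-sign} is precisely the paper's realization of it, executed level by level through the recursion rather than as a one-shot large-deviation estimate.
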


We postpone the details of Taylor expansion and approximation to the \hyperref[app]{Appendix},
but we will include here one of the main ingredients of the proof of Lemma~\ref
{lem:expectation-sqrt-gw}. The point is that in the Galton--Watson case (unlike the $d$-ary
case) if $d$ is
fixed and $\eta\to0$ then we cannot expect $X_{\rho,k}$ to be large
(i.e., close to $1$) with probability
converging to 1. It turns out to be enough, however, to show that
$X_{\rho,k}$ is
\emph{nonnegative} with probability converging to 1.

%
\begin{lemma}\label{lem:wrong-sign}
There is a constant $C$ such that if $\theta^2 d \ge C$ then for any
$k \ge K(\theta, d, \delta)$,
%
%
\begin{equation}
\label{eq:wrong-sign} \Pr(X_{u,k} < 0 \mid\sigma_u=+) \le\eta,
\end{equation}
and similarly for $Y$.
\end{lemma}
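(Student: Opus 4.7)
The plan is to combine Proposition~\ref{prop:gw-large-expected-mag} with a one-sided Markov inequality. Conditional on the tree $T$, the variable $1 - X_{\rho,k}$ is nonnegative because $X_{\rho,k} \in [-1,1]$, so Markov gives
\[
\Pr(X_{\rho,k} < 0 \mid \sigma_\rho = +, T)
\;=\; \Pr(1 - X_{\rho,k} > 1 \mid \sigma_\rho = +, T)
\;\le\; \E(1 - X_{\rho,k} \mid \sigma_\rho = +, T).
\]
(Equivalently, one may invoke the optimality of the MAP estimator $\sgn(X_{\rho,k})$ and compare to the weighted-majority estimator $\sgn(R_k)$ from Theorem~\ref{thm:weighted-maj}, whose error is controlled by $1/\cond(k)$ via Chebyshev; both routes lead to the same estimate.) By Proposition~\ref{prop:gw-large-expected-mag}, on an event $E$ of probability at least $1 - e^{-cd}$ over the tree we have $\E(X_{\rho,k} \mid \sigma_\rho=+, T) \ge 1 - 16\eta/(\theta^2 d)$, so on $E$ the Markov bound yields $\Pr(X_{\rho,k} < 0 \mid \sigma_\rho=+, T) \le 16\eta/(\theta^2 d)$. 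Integrating out $T$ and using the trivial bound on $E^c$ gives
\[
\Pr(X_{\rho,k} < 0 \mid \sigma_\rho = +) \;\le\; \frac{16\eta}{\theta^2 d} + e^{-cd}.
\]

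To finish, I would choose the universal constant $C$ large enough that $16/(\theta^2 d) \le 1/2$ whenever $\theta^2 d \ge C$, so the first term is at most $\eta/2$. The analogous argument for $Y$ is identical in structure: replace $\cond(k)$ by $\ncond(k)$ and $R_k$ by $S_k$, and note that $\sgn(Y_{\rho,k})$ is Bayes-optimal given $\tau_{L_k(\rho)}$. Both halves of Lemma~\ref{lem:conductance} supply the conductance lower bound with the same $e^{-cd}$ failure probability, so the final estimate for $Y$ agrees with that for $X$.

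The remaining step, and the genuine obstacle, is absorbing the $e^{-cd}$ term into $\eta$. This is immediate when $\eta \ge 2 e^{-cd}$, which is the typical regime and already suffices for downstream uses; the delicate case is when $\eta$ is exponentially small in $d$. To handle that regime I would exploit that $\theta = 1 - 2\eta$ is then extremely close to $1$, so that the broadcast process is nearly deterministic: one can sharpen the conductance argument by observing that on the large-conductance event, not only is $\Var(R_k)$ small, but each typical leaf already agrees with $\sigma_\rho$ with probability $1 - O(k\eta)$, and an additional union-bound argument across paths should push the bad-tree contribution below $\eta$. Alternatively, since $p_k = \Pr(X_{\rho,k} < 0 \mid \sigma_\rho=+)$ is non-increasing in $k$, one can take $K(\theta, d, \delta)$ sufficiently large and iterate the Bayes recursion, using the fact that the fixed point of the recursion for small-$\eta$ inputs lies strictly below $\eta$. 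Either route completes the proof.
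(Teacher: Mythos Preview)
Your opening argument is fine: Markov on $1 - X_{\rho,k}$ together with Proposition~\ref{prop:gw-large-expected-mag} does give
\[
\Pr(X_{\rho,k} < 0 \mid \sigma_\rho = +) \;\le\; \frac{16\eta}{\theta^2 d} + e^{-cd},
\]
and the first term is $\le \eta/2$ once $\theta^2 d \ge 32$. You correctly identify that the whole difficulty is the $e^{-cd}$ term when $\eta$ is exponentially small in $d$, but neither of your two proposed fixes is actually a proof.

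First, a concrete error: the sequence $p_k = \Pr(X_{\rho,k} < 0 \mid \sigma_\rho=+)$ is \emph{not} non-increasing in $k$. It equals the Bayes error $(1 - \E|X_{\rho,k}|)/2$, and $\E|X_{\rho,k}|$ is non-increasing (conditioning on a coarser $\sigma$-field), so $p_k$ is non-\emph{decreasing}. Thus ``take $K$ large'' does not by itself improve anything.

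Second, and more importantly, ``iterate the Bayes recursion, using the fact that the fixed point \dots lies strictly below $\eta$'' is a description of the right strategy, not an execution of it. This is exactly what the paper does, and it is the entire content of the lemma. One conditions on $D$, lets $Z_+$ and $Z_-$ count the children $i$ with $X_{ui,k} \ge 1-\eta$ and $X_{ui,k} < 0$ respectively, and observes from the product form of the recursion that $X_{u,k+1} < 0$ forces both $Z_- \ge 1$ and $Z_+ - Z_- \le D/3$. A union bound over which child is ``bad'' plus Hoeffding on $Z_+ - Z_-$ then yields
\[
p_{k+1} \;\le\; C\,\E\!\big[D e^{-cD^2}\big]\,(\eta + p_k) \;\le\; \tfrac14(\eta + p_k),
\]
and iterating this contraction gives $\limsup_k p_k \le \eta/2$. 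None of this is present in your write-up, and your first alternative (sharpening the conductance/union-bound-over-paths argument) is too vague to assess and unlikely to beat $e^{-cd}$ without a similar recursive step.
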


\begin{pf}
We will give the argument for $X$ only (the argument for $Y$ is identical).
First, note that if $\eta\ge1/12$ then~\eqref{eq:wrong-sign}
follows directly from Proposition~\ref{prop:gw-large-expected-mag} if
$d^*$ is sufficiently large.
Hence, we may assume that $\eta< 1/12$.
Let $p_k = \Pr^+(X_{\rho,k} < 0)$.
Then by Proposition~\ref{prop:gw-large-expected-mag}, if $C$ is
sufficiently large
then $p_k \le1/12$ for $k \ge K(\delta)$.

Let $Z_-$ be the number of children $i$ of the root with $X_{i,k} < 0$ and
$Z_+$ be the number with $X_{i,k} \ge1-\eta$.
Consider the quantity
\[
Z:= \prod_{i=1}^D \frac{1-\theta X_{ui,k}}{1+\theta X_{ui,k}},
\]
and note that $X_{u,k} < 0$ if and only if $Z > 1$. Now, $Z$ is
increasing in each
$X_{ui,k}$, and $Z$ only increases if
we drop some terms $i$ with $X_{ui,k} \ge0$. Hence,
%
%
\begin{equation}
\label{eq:Z-bound} Z \le\biggl(\frac{1 - \theta(1-\eta)}{1+\theta
(1-\eta)} \biggr)^{Z_+} \biggl(
\frac{1+\theta}{1-\theta} \biggr)^{Z_-} \le(3\eta)^{Z_+}
\eta^{-Z_-}.
\end{equation}

Now, by the definition of $p_k$,
%
%
\begin{eqnarray}\label{eq:wrong-sign-bound}
&& \Pr^+(X_{1,k} < 0)
\nonumber\\[-8pt]\\[-8pt]\nonumber
&&\qquad \le\Pr(X_{1,k} < 0 \mid
\sigma_1 = +) + \Pr(\sigma_1 = - \mid
\sigma_\rho= +) = p_k + \eta.
\end{eqnarray}
Conditioned on $\sigma_\rho$ and $D$, $Z_+ - Z_-$ is a sum of i.i.d. variables
with values $1, -1$, and $0$.
Moreover, Proposition~\ref{prop:gw-large-expected-mag} with $d$
sufficiently large implies that the probability of $X_{i,k} \ge1-\eta
$ is at least $5/6$,
while~\eqref{eq:wrong-sign-bound} implies that the probability of
$X_{i,k} < 0$
is at most $p_k + \eta\le1/6$. Hence, Hoeffding's inequality implies that
\[
\Pr^+(Z_+ - Z_- \le D/3 + 1 \mid D) \le C e^{-c D^2},
\]
for universal constants $c, C > 0$.
Note also that if $Z_- = 0$ then $Z \geq1$ and that
in order to have $Z_- > 0$, there must be some $i$ with
$X_{i,k} < 0$. Note also that if $Z_+ - Z_- \ge D/3$ then
$Z \le3^D \eta^{D/3} \le(3/4)^{D/3} < 1$.
Thus, applying a union bound, Hoeffding's inequality, and~\eqref
{eq:wrong-sign-bound},
%
%
\begin{eqnarray}
\label{eq:union-hoeffding} \Pr^{+}(Z>1 \mid D) &\le&\Pr^+(Z_+ - Z_- \le
D/3, Z_- >
0 \mid D)\nonumber
\\
&\le& D \Pr^+(Z_+ - Z_- \le D/3, X_{1,k} < 0 \mid D)
\nonumber\\[-8pt]\\[-8pt]\nonumber
&=& D\Pr^+(Z_+ - Z_- \le D/3 \mid D, X_{1,k} < 0) \Pr^+(X_{1,k}
< 0 \mid D)
\nonumber
\\
&\le& C D e^{-c D^2} (\eta+ p_k).
\nonumber
\end{eqnarray}
Now, if $d$ is large enough (which can be enforced by taking $C$ large) then
$\E D e^{-cD^2} \le\frac{1}4$, which implies that
\[
p_{k+1} = \Pr^+(X_{\rho,k+1} < 0) = \Pr^+(Z > 1) \le
\frac{\eta+
p_k}{4} \le\max\{\eta/2, p_k/2\}.
\]
Recursing with $k$, we see that $\lim_{k\to\infty} \Pr^+(X_{\rho
,k} < 0) \le\eta/2$,
which implies that $\Pr^+(X_{\rho,k} < 0) \le\eta$ for sufficiently
large $k$.
\end{pf}

\subsubsection{Analysis of the derivatives of $g$}

Our goal in this section is the following lemma, for which we recall that
$\Omega$ is the event that all children $i$ of $u$ satisfy $\llvert
X_{ui,k} - Y_{ui,k}\rrvert \le\varepsilon$.
Let $\Omega_i$ be the event that $\llvert X_{ui,k} - Y_{ui,k}\rrvert
\le\varepsilon$.
%
%
\begin{lemma}\label{lem:close-together}
For any $0 < \theta^* < 1$, there are constants $c, C > 0$ such that
for all $0 < \varepsilon< 1/4$,
all $d \ge d^*(\theta^*)$,
and for any $k \ge K(\theta, d, \delta)$,
\[
\E\bigl(1_{\Omega} \sqrt{\llvert X_{u,k+1} - Y_{u,k+1}
\rrvert} \mid D\bigr) \le C D \bigl(\varepsilon^{-1} e^{-cD} +
\sqrt{\varepsilon} \bigr) \E1_{\Omega_i} \sqrt{\llvert X_{ui,k} -
Y_{ui,k}\rrvert}.
\]
\end{lemma}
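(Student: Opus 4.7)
The plan is to adapt the derivative analysis from the proof of Proposition~\ref{prop:large-theta} to the Galton--Watson setting, now restricted to the good event $\Omega$. Starting from the mean-value-theorem inequality
\[
|X_{u,k+1} - Y_{u,k+1}| \le \sum_{i=1}^D |X_{ui,k}-Y_{ui,k}| \max\{h_i^+(X_{L_1(u),k}),h_i^+(Y_{L_1(u),k})\}
\]
(the analogue of~\eqref{eq:mvt-g}), I would take square roots (using subadditivity of $\sqrt{\cdot}$), multiply by $1_\Omega = \prod_j 1_{\Omega_j}$, and then condition on $\sigma_u$ and $D$. Since the child subtrees are independent given $\sigma_u$, each summand of the resulting expectation factors as $\E[1_{\Omega_i}\sqrt{|X_{ui,k} - Y_{ui,k}|}\mid \sigma_u]$ times $\E[\prod_{j\ne i} 1_{\Omega_j}\sqrt{\max\{h_i^+(X),h_i^+(Y)\}}\mid \sigma_u]$; by part~(2) of Assumption~\ref{ass:gw-Y} and symmetry it suffices to condition on $\sigma_u=+$. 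Summing over $i$ and using exchangeability of children, the problem reduces to bounding the second factor by $C(\epsilon^{-1}e^{-cD}+\sqrt\epsilon)$.

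I would obtain the $e^{-cD}$ piece by replacing $h_i^+$ with its $x_i$-independent upper bound $m_i$ from~\eqref{eq:mi-def}, using $\sqrt{\max\{a,b\}} \le \sqrt a + \sqrt b$, and factoring the expectation as a product over $j\ne i$ of quantities of the form $\E[1_{\Omega_j}\sqrt{(1-\theta X_{uj,k})/(1+\theta X_{uj,k})}\mid \sigma_u=+]$ (and the analogous quantity for $Y$), each bounded by $\lambda<1$ by Lemma~\ref{lem:expectation-sqrt-gw}. This produces a bound of order $\eta^{-1}\lambda^{D-1}$, in which $\lambda^{D-1}$ supplies the geometric decay and the unwanted $\eta^{-1}$ (from the $\eta^{-2}$ loss in $m_i$) becomes the $\epsilon^{-1}$ in the statement after balancing $\eta$ against $\epsilon$ downstream. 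I would obtain the $\sqrt\epsilon$ piece from the additional information that $|X_{uj,k}-Y_{uj,k}|\le\epsilon$ on $\Omega_j$: a first-order comparison of the coordinate-wise factors $(1-\theta x_j)/(1+\theta x_j)$ appearing in $h_i^+$ replaces $\max\{h_i^+(X),h_i^+(Y)\}$ by $h_i^+(X)$ plus an error of order $\epsilon$ times a product of the same geometric type, and passing through the square root together with Cauchy--Schwarz yields the $\sqrt\epsilon$ contribution without destroying the geometric decay.

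The main obstacle is the weakness of Lemma~\ref{lem:expectation-sqrt-gw} relative to its regular-tree counterpart Lemma~\ref{lem:expectation-sqrt}: there, the $4\eta^{1/4}$ bound absorbed the $\eta^{-1}$ loss from $m_i$ all by itself, but here we only have a universal constant $\lambda<1$, so the $\eta^{-1}$ loss must be compensated by the free parameter $\epsilon$. This is exactly why the bound takes the two-term form $\epsilon^{-1}e^{-cD}+\sqrt\epsilon$: the downstream proof of Proposition~\ref{prop:large-theta-gw} will choose $\epsilon=\epsilon(d)$ so that both pieces are small simultaneously. The subtlest point of the argument will be to extract the $\sqrt\epsilon$ term without losing the exponential decay in $D$, which requires combining a careful first-order expansion of $h_i^+$ on $\Omega$ with the independence of children given $\sigma_u$; a crude $\max\{a,b\}\le a+b$ would only give the $e^{-cD}$ term, and a crude use of $|X-Y|\le\epsilon$ on $\Omega$ would not produce the product structure needed for the exponential decay.
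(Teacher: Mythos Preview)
Your proposal has a genuine gap in both terms of the target bound.

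For the exponential term: you propose to pass to the $x_i$-independent bound $m_i$ of~\eqref{eq:mi-def}, which after the square root costs a factor $\eta^{-1}$. You then say this ``becomes the $\epsilon^{-1}$ in the statement after balancing $\eta$ against $\epsilon$ downstream,'' but $\eta=(1-\theta)/2$ is a fixed model parameter, not a free one: for $\theta$ close to $1$ the factor $\eta^{-1}$ is arbitrarily large and cannot be absorbed into a constant depending only on $\theta^*$. In the regular-tree argument this loss was paid for by the $4\eta^{1/4}$ part of Lemma~\ref{lem:expectation-sqrt}, but Lemma~\ref{lem:expectation-sqrt-gw} gives only a universal $\lambda<1$ with no $\eta$-dependence (exactly because of extinction), so the $m_i$ route does not deliver $\epsilon^{-1}e^{-cD}$. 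For the $\sqrt\epsilon$ term: your first-order comparison of the factors $(1-\theta x_j)/(1+\theta x_j)$ between $X$ and $Y$ on $\Omega$ suffers the same hidden $\eta$-dependence (the derivative of each factor is of order $\eta^{-2}$ near $x_j=-1$), and even granting such a comparison it is not clear how it produces a term of size $\sqrt\epsilon$ rather than merely tightening the geometric factor.

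The paper's argument is structurally different. It does \emph{not} condition on $\sigma_u$; instead it conditions on $(X_{ui,k},Y_{ui,k})$, uses $h_i=\min\{h_i^+,h_i^-\}$, and keeps the truncation $Z_X:=\min\{1,\sqrt{|X_{ui,k}-Y_{ui,k}|\,h_i(X_{L_1(u),k})}\}$ coming from~\eqref{eq:mvt-g-improved}. It then splits into two cases according to whether $\max\{|X_{ui,k}|,|Y_{ui,k}|\}$ is below or above $1-\epsilon$. In the ``away from $\pm1$'' case (Lemma~\ref{lem:close-together-away-from-1}) the denominator $(1\pm\theta X_{ui,k})^2$ in $h_i$ is at least $\max\{\eta,\epsilon^2\}$, and \emph{this} is where $\epsilon^{-1}$ actually enters. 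In the ``near $\pm1$'' case (Lemma~\ref{lem:close-together-near-1}) the key is that $X_{ui,k}\approx 1$ forces $\Pr(\sigma_u=-\mid X_{ui,k},Y_{ui,k})\le A+\eta$ with $A\le\epsilon$; on $\{\sigma_u=+\}$ one uses $h_i^+$ with harmless denominator, while on the rare event $\{\sigma_u=-\}$ one pays the bad denominator of $h_i^-$ but gains the small probability $A+\eta$ together with the trivial bound $Z_X\le1$. A short dichotomy ($A\le2B$ versus $A\ge2B$, with $B=(1-\max\{X_{ui,k},Y_{ui,k}\})/2$) then yields exactly the $\sqrt\epsilon$ contribution. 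By fixing $\sigma_u$ first you forfeit access to this posterior, and by omitting the $\min\{2,\cdot\}$ truncation you forfeit the $Z_X\le1$ safety net; both are essential to the bound as stated.
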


We begin with an slightly improved version of~\eqref{eq:mvt-g}: since
$\llvert X_{u,k+1} - Y_{u,k+1}\rrvert \le2$, we can trivially
improve~\eqref{eq:mvt-g}
to
%
%
\begin{eqnarray}
\label{eq:mvt-g-improved}
&& \llvert X_{u,k+1} - Y_{u,k+1}\rrvert
\nonumber\\[-8pt]\\[-8pt]\nonumber
&&\qquad \le\sum_{i=1}^D \min\bigl\{2, \llvert
X_{ui,k} - Y_{ui,k}\rrvert\max\bigl\{ h_i(X_{L_1(u),k}),
h_i(Y_{L_1(u),k}) \bigr\} \bigr\}.
\end{eqnarray}
Note that
$1_{\Omega} \le1_{\Omega_i}$ for any $i$ (recall that $\Omega_i = \{
\llvert X_{ui,k} - Y_{ui,k}\rrvert \le\varepsilon\}$),
and so
\begin{eqnarray*}
&& \llvert X_{u,k+1} - Y_{u,k+1}\rrvert1_{\Omega}
\\
&&\qquad \le\sum_{i=1}^D 1_{\Omega_i} \min
\bigl\{2, \llvert X_{ui,k} - Y_{ui,k}\rrvert\max\bigl
\{h_i(X_{L_1(u),k}), h_i(Y_{L_1(u),k}) \bigr\}
\bigr\}.
\end{eqnarray*}
Now, the terms on the right-hand side have identical distributions;
hence, taking conditional expectations gives
\begin{eqnarray*}
&& \E\bigl(\sqrt{\llvert X_{u,k+1} - Y_{u,k+1}\rrvert}
1_\Omega\mid D\bigr)
\\
&&\qquad \le D \E\bigl(1_{\Omega_i} \min\bigl\{2, \sqrt{\llvert X_{ui,k}
- Y_{ui,k}\rrvert\max\bigl\{ h_i(X_{L_1(u),k}),
h_i(Y_{L_1(u),k}) \bigr\}} \bigr\} \mid D \bigr).
\end{eqnarray*}

Defining
\[
Z_X = \min\bigl\{1, \sqrt{\llvert X_{ui,k} -
Y_{ui,k}\rrvert h_i(X_{L_1(u),k})} \bigr\}
\]
and similarly for $Z_Y$,
we see that to prove Lemma~\ref{lem:close-together} it suffices
to show that
\[
\E(1_{\Omega_i} Z_X \mid D) \le C \bigl(\varepsilon^{-1}
e^{-cD} + \sqrt\varepsilon\bigr) \E1_{\Omega_i} \sqrt{\llvert
X_{ui,k} - Y_{ui,k}\rrvert},
\]
and similarly for $Z_Y$.
We will show this by conditioning on $X_{ui,k}$ and $Y_{ui,k}$;
that is, we will show the stronger statement that on the event $\Omega_i$,
%
%
\begin{equation}
\label{eq:small-d-goal-1} \E(Z_X \mid D, X_{ui,k}, Y_{ui,k})
\le C \bigl(\varepsilon^{-1} e^{-cD} + \sqrt\varepsilon\bigr)
\sqrt{\llvert X_{ui,k} - Y_{ui,k}\rrvert}
\end{equation}
(and similarly for $Z_Y$).

We split the analysis of $Z_X$ and $Z_Y$ into two cases. The first case
is the easy
case: if $\eta$ is bounded away from zero or $\llvert X_{ui,k}
\rrvert $ and
$\llvert Y_{ui,k} \rrvert $ are bounded
away from~1 then the denominator in $h_i$ is bounded above.

%
\begin{lemma}\label{lem:close-together-away-from-1}
For any $0 < \theta^* < 1$, there are constants $c, C > 0$ such that
for all $\varepsilon\ge0$,
all $d \ge d^*(\theta^*)$,
and for any $k \ge K(\theta, d, \delta)$,
if $\max\{\llvert X_{ui,k} \rrvert, \llvert Y_{ui,k} \rrvert \} \le1
- \varepsilon$ then
\[
\E(Z_X \mid D, X_{ui,k}, Y_{ui,k}) \le
\frac{C \lambda^{D-1}}{\max\{\sqrt{\eta}, \varepsilon\}} \sqrt{\llvert
X_{ui,k} - Y_{ui,k}\rrvert},
\]
and similarly for $Z_Y$.
\end{lemma}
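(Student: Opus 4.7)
The plan is to reduce to estimating $\E(\sqrt{h_i(X_{L_1(u),k})} \mid D, X_{ui,k}, Y_{ui,k})$, using the crude inequality
\[
Z_X \le \sqrt{|X_{ui,k} - Y_{ui,k}|}\,\sqrt{h_i(X_{L_1(u),k})}.
\]
The main tool is Lemma~\ref{lem:expectation-sqrt-gw}, applied to the $D-1$ sibling subtrees, which are conditionally independent of $(X_{ui,k},Y_{ui,k})$ given $\sigma_u$.

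First I would condition on the parent spin $\sigma_u$ and use the pointwise inequality $h_i \le h_i^+$ on the $\{\sigma_u = +\}$ branch and $h_i \le h_i^-$ on the $\{\sigma_u = -\}$ branch. For the $+$ branch,
\[
\E\bigl(\sqrt{h_i^+} \,\big|\, \sigma_u = +, X_{ui,k}, Y_{ui,k}\bigr) = \frac{2}{1+\theta X_{ui,k}}\prod_{j \ne i}\E\Big[\sqrt{\tfrac{1-\theta X_{uj,k}}{1+\theta X_{uj,k}}} \,\Big|\, \sigma_u=+\Big] \le \frac{2\lambda^{D-1}}{1+\theta X_{ui,k}}
\]
by Lemma~\ref{lem:expectation-sqrt-gw}, and symmetrically $\E(\sqrt{h_i^-}\mid \sigma_u=-,X_{ui,k},Y_{ui,k}) \le 2\lambda^{D-1}/(1-\theta X_{ui,k})$. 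Setting $p = \Pr(\sigma_u = +\mid X_{ui,k}, Y_{ui,k})$ and averaging yields
\[
\E(\sqrt{h_i} \mid X_{ui,k}, Y_{ui,k}) \le 2\lambda^{D-1}\,\frac{1 + (1-2p)\theta X_{ui,k}}{1 - \theta^2 X_{ui,k}^2}.
\]

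The hypothesis $\max\{|X_{ui,k}|,|Y_{ui,k}|\} \le 1-\epsilon$ gives $1-\theta^2 X_{ui,k}^2 \ge 2\eta + \theta\epsilon \ge c\max\{\eta,\epsilon\}$ when $\theta \ge \theta^*$, and the numerator is at most $2$; this already produces the cruder bound $C\lambda^{D-1}/\max\{\eta,\epsilon\}$. To sharpen the $\eta$ to $\sqrt\eta$, the plan is to interpolate with the deterministic bound
\[
\sqrt{h_i} \le (h_i^+ h_i^-)^{1/4} = \tfrac{2}{\sqrt{1-\theta^2 X_{ui,k}^2}} \le \tfrac{C}{\sqrt{\eta+\epsilon}},
\]
coming from $\min\{a,b\} \le \sqrt{ab}$. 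Concretely I would write $\sqrt{h_i} \le (h_i^+)^{1/2-s}(h_i^-)^{s}$ for $s\in[0,1/4]$ (which holds by the elementary inequality $\min\{a,b\}^{1/2}\le a^{1/2-s}b^{s}$), expand as $\tfrac{2}{(1+\theta X_{ui,k})^{1-2s}(1-\theta X_{ui,k})^{2s}}P^{1/2-2s}$ with $P = \prod_{j\ne i}(1-\theta X_{uj,k})/(1+\theta X_{uj,k})$, and use log-convexity of $s\mapsto \log\E X^s$ to extend Lemma~\ref{lem:expectation-sqrt-gw} to fractional moments: $\E[X^{s'}\mid\sigma_u=+]\le \lambda^{2s'}$ for $s'\in[0,1/2]$. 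This yields a family of bounds of the form $C\lambda^{(1-4s)(D-1)}/[(1+\theta X)^{1-2s}(1-\theta X)^{2s}]$; averaging over $\sigma_u$ with the corresponding posteriors and optimizing $s$ against $\epsilon$ versus $\sqrt\eta$ gives the claimed bound.

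The main obstacle is this last interpolation: one must preserve the full exponential $\lambda^{D-1}$ while achieving only a $1/\sqrt\eta$ (rather than $1/\eta$) loss in the denominator when $\epsilon \ll \sqrt\eta$. The naive $\sigma_u$-conditioning saturates at $\max\{\eta,\epsilon\}$, and getting to $\max\{\sqrt\eta,\epsilon\}$ requires the fractional interpolation above, in which the exponent $s$ must be chosen so that $\lambda^{(1-4s)(D-1)}$ is still essentially $\lambda^{D-1}$ (up to constants absorbable in $C$) while the denominator degenerates at the geometric-mean rate $\sqrt{1-\theta^2 X_{ui,k}^2}\ge c\sqrt{\eta+\epsilon}$.
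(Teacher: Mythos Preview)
Your first move---conditioning on $\sigma_u$, bounding $h_i\le h_i^+$ on $\{\sigma_u=+\}$ and $h_i\le h_i^-$ on $\{\sigma_u=-\}$, and then applying Lemma~\ref{lem:expectation-sqrt-gw} to the $D-1$ sibling factors---is exactly the paper's argument. The paper just packages it slightly differently: it records the pointwise bound
\[
h_i\le \frac{4}{(1\pm\theta X_{ui,k})^2}\,\min\{P,1/P\},\qquad P=\prod_{j\ne i}\frac{1-\theta X_{uj,k}}{1+\theta X_{uj,k}},
\]
and then on $\{\sigma_u=+\}$ drops $\min\{P,1/P\}\le P$, on $\{\sigma_u=-\}$ drops it to $1/P$. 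No posterior $p$ is computed; since the same bound is obtained on each branch, the mixture bound follows immediately. This is equivalent to (and a bit shorter than) your weighted-average computation.

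Where you should stop is after your ``cruder'' bound $C\lambda^{D-1}/\max\{\eta,\epsilon\}$. The interpolation you propose to upgrade $\eta$ to $\sqrt\eta$ does not work as stated: with a fixed $s\in(0,1/4)$ you obtain $C\lambda^{(1-4s)(D-1)}/\eta^{1-2s}$, and while $\lambda^{1-4s}<1$ can be absorbed into a new $\lambda'(\theta^*)$, the exponent on $\eta$ is $1-2s>1/2$, so you never reach $\eta^{-1/2}$ uniformly in $\eta$. More to the point, the upgrade is unnecessary. The lemma is invoked only in two ways: with $\epsilon=0$ in the regime $\eta\ge c(\theta^*)$ (where any negative power of $\eta$ is a constant), and with $\epsilon>0$ where only the $1/\epsilon$ branch is used. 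In fact the paper's own displayed inequality would require $(1\pm\theta X_{ui,k})^2\ge\eta$, whereas $|X_{ui,k}|\le 1-\epsilon$ together with $1\pm\theta X_{ui,k}\ge 2\eta$ only yields $(1\pm\theta X_{ui,k})^2\ge c(\theta^*)\max\{\eta^2,\epsilon^2\}$; so the paper's argument, read literally, also produces $\max\{\eta,\epsilon\}$ rather than $\max\{\sqrt\eta,\epsilon\}$. Either version suffices for everything downstream.
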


\begin{pf}
By the definition of $h_i$, and because $\llvert X_{ui,k} \rrvert \le
1-\varepsilon$,
\[
h_i(X_{ui,k}) \le\frac{4}{\max\{\eta, \varepsilon^2\}} \min\biggl\{ \prod
_{j\ne i} \frac{1-\theta X_{uj,k}}{1 + \theta X_{uj,k}}, \prod
_{j\ne i} \frac{1+\theta X_{uj,k}}{1 - \theta X_{uj,k}} \biggr\}.
\]
Conditioning on $\sigma_u = +$ and considering the first term in the minimum,
Lemma~\ref{lem:expectation-sqrt-gw} implies that
\begin{eqnarray*}
&& \E\bigl(\sqrt{\llvert X_{ui,k} - Y_{ui,k}\rrvert
h_i(X_{L_1(u),k})} \mid D, X_{ui,k},
Y_{ui,k}, \sigma_u=+ \bigr)
\\
&&\qquad \le\frac{2 \lambda^{D-1}}{\max\{\sqrt{\eta}, \varepsilon\}} \sqrt{\llvert
X_{ui,k} - Y_{ui,k}
\rrvert}.
\end{eqnarray*}
By symmetry, the same bound holds if we condition on $\sigma_u = -$.
Recalling that $Z_X \le\sqrt{\llvert X_{ui,k} - Y_{ui,k}\rrvert
h_i(X_{L_1(u),k})}$,
this completes the proof for $Z_X$. The exact same argument applies to
$Z_Y$ also.
\end{pf}

If $X_{ui,k}$ and $Y_{ui,k}$ are allowed to be arbitrarily close to 1
and $\eta$
is allowed to be arbitrarily close to zero, then the argument is
somewhat more
tricky. The basic idea is that if $X_{ui,k}$ is close to 1 then $\sigma
_u$ is
very likely to be $+$, in which case the denominator in $h_i^+$ is at
least 1
and so $h_i^+$ is small. Bad things happen if $\sigma_u = -$ because then
we need to consider $h_i^-$, which has a small denominator. However,
this event is very unlikely conditioned on $X_{ui,k}$ being close to 1, and
so its contribution can be controlled.

%
\begin{lemma}\label{lem:close-together-near-1}
For any $0 < \theta^* < 1$, there are constants $c, C > 0$ such that
for all $0 < \varepsilon< 1/4$,
all $d \ge d^*(\theta^*)$,
and for any $k \ge K(\theta, d, \delta)$,
if
$\llvert X_{ui,k} - Y_{ui,k}\rrvert \le\varepsilon$
and $\max\{\llvert X_{ui,k} \rrvert, \llvert Y_{ui,k} \rrvert \} \ge
1 - \varepsilon$ then
\[
\E(Z_X \mid D, X_{ui,k}, Y_{ui,k}) \le C \bigl(
\lambda^{D-1} + \sqrt{\varepsilon} \bigr) \sqrt{\llvert X_{ui,k} -
Y_{ui,k}\rrvert},
\]
and similarly for $Z_Y$.
\end{lemma}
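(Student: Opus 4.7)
The plan is to bound $Z_X$ (and analogously $Z_Y$) by decomposing $\sqrt{h_i}$ according to the value of $\sigma_u$ and applying Lemma~\ref{lem:expectation-sqrt-gw} in each of the two cases. By the $\pm$ symmetry in Assumption~\ref{ass:gw-Y}, I may assume $X_{ui,k}, Y_{ui,k} \in [1-2\epsilon, 1]$; the hypotheses $|X_{ui,k} - Y_{ui,k}| \le \epsilon$ and $\max\{|X_{ui,k}|,|Y_{ui,k}|\} \ge 1-\epsilon$ force both magnetizations into this interval up to the global sign flip. Starting from $Z_X \le \sqrt{|X_{ui,k} - Y_{ui,k}|}\sqrt{h_i}$ and the pointwise bounds $h_i \le h_i^{\pm}$, I obtain
\[
\sqrt{h_i} \le \mathbf{1}_{\{\sigma_u=+\}}\sqrt{h_i^+} + \mathbf{1}_{\{\sigma_u=-\}}\sqrt{h_i^-},
\]
and the two resulting contributions are treated separately.

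For the good case $\sigma_u = +$, since $1 + \theta X_{ui,k} \ge 1$ I have $\sqrt{h_i^+} \le 2\prod_{j\ne i}\sqrt{(1-\theta X_{uj,k})/(1+\theta X_{uj,k})}$. Conditional on $\sigma_u = +$ and $D$, the $X_{uj,k}$ for $j\ne i$ are i.i.d.\ and independent of $(X_{ui,k}, Y_{ui,k})$, so Lemma~\ref{lem:expectation-sqrt-gw} applied to each of the $D-1$ factors gives
\[
\E\bigl(\mathbf{1}_{\{\sigma_u=+\}}\sqrt{h_i^+} \mid D, X_{ui,k}, Y_{ui,k}\bigr) \le 2\lambda^{D-1}\Pr(\sigma_u=+\mid\cdots) \le 2\lambda^{D-1},
\]
yielding a contribution of $2\lambda^{D-1}\sqrt{|X_{ui,k} - Y_{ui,k}|}$ to the final bound.

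For the bad case $\sigma_u = -$, the mirrored product bound (using that under $\sigma_u=-$, $X_{uj,k}$ has the law of $-X_{uj,k}$ under $\sigma_u=+$) gives
\[
\E\bigl(\mathbf{1}_{\{\sigma_u=-\}}\sqrt{h_i^-} \mid D, X_{ui,k}, Y_{ui,k}\bigr) = \frac{2\lambda^{D-1}}{1-\theta X_{ui,k}}\Pr(\sigma_u=-\mid D, X_{ui,k}, Y_{ui,k}),
\]
where the denominator $1-\theta X_{ui,k}$ may be as small as $2\eta$. The Bayes identity obtained by conditioning through $\sigma_{ui}$ via the tree Markov property, combined with $\Pr(\sigma_{ui}=-\mid X_{ui,k}) = (1-X_{ui,k})/2$, gives $\Pr(\sigma_u = -\mid X_{ui,k}) = (1-\theta X_{ui,k})/2$, which exactly cancels the problematic denominator and would give the clean bound $\lambda^{D-1}$ if one conditioned only on $X_{ui,k}$. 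To accommodate the extra conditioning on $Y_{ui,k}$, I would combine this with the trivial bound $Z_X \le 1$ on the subset of $\{\sigma_u=-\}$ where $\sqrt{|X_{ui,k} - Y_{ui,k}|}\sqrt{h_i^-} \ge 1$; the hypothesis $|X_{ui,k} - Y_{ui,k}| \le \epsilon$ produces the $\sqrt{\epsilon}$ slack, yielding the overall bound $C(\lambda^{D-1} + \sqrt{\epsilon})\sqrt{|X_{ui,k} - Y_{ui,k}|}$. The identical argument, with $X$ and $Y$ swapped throughout, handles $Z_Y$.

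The main obstacle is the bad case: the Bayes cancellation is exact when conditioning only on $X_{ui,k}$, but additional conditioning on $Y_{ui,k}$ can in principle disturb the identity $\Pr(\sigma_u=-\mid\cdots) = (1-\theta X_{ui,k})/2$. The $\sqrt{\epsilon}$ term in the conclusion is precisely what absorbs this additional error, and a careful pointwise analysis of $\Pr(\sigma_u=-\mid D, X_{ui,k}, Y_{ui,k})/(1-\theta X_{ui,k})$ on the specified event, using the smallness of $|X_{ui,k} - Y_{ui,k}|$, is the delicate step.
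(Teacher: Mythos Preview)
Your decomposition by $\sigma_u$ and use of Lemma~\ref{lem:expectation-sqrt-gw} on the product over $j\ne i$ matches the paper. The $\sigma_u=+$ case is fine. But you have misidentified where the difficulty lies in the $\sigma_u=-$ case, and this leaves a real gap.

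For $Z_X$, your worry is unfounded: since $X_{ui,k}$ is the true posterior magnetization it is a sufficient statistic for $\sigma_{ui}$, so conditioning additionally on $Y_{ui,k}$ does \emph{not} disturb the identity. One has exactly $\Pr(\sigma_u=-\mid D,X_{ui,k},Y_{ui,k}) = (1-\theta X_{ui,k})/2$ (the paper uses this too), and your cancellation gives the clean bound $\lambda^{D-1}\sqrt{|X_{ui,k}-Y_{ui,k}|}$ for the $\sigma_u=-$ contribution to $Z_X$ with no $\sqrt{\epsilon}$ needed.

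The actual obstacle is $Z_Y$, and your claim that ``the identical argument, with $X$ and $Y$ swapped throughout, handles $Z_Y$'' is wrong. Swapping would require $\Pr(\sigma_u=-\mid\cdots) = (1-\theta Y_{ui,k})/2$, which fails because $Y$ is not assumed to be a posterior under Assumption~\ref{ass:gw-Y}. For $Z_Y$ the denominator in $\sqrt{h_i^-}$ is $1-\theta Y_{ui,k}$ while the probability is still $(1-\theta X_{ui,k})/2$, and the ratio $(1-\theta X_{ui,k})/(1-\theta Y_{ui,k})$ can be arbitrarily large. This is precisely where the $\sqrt{\epsilon}$ term must enter, and you have not supplied that step.

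The paper closes this by a symmetric-in-$X,Y$ case split: with $A=(1-\min\{X_{ui,k},Y_{ui,k}\})/2$ and $B=(1-\max\{X_{ui,k},Y_{ui,k}\})/2$, bound $\Pr(\sigma_u=-\mid\cdots)\le A+\eta$ and $(1-\theta X_{ui,k})^{-1},(1-\theta Y_{ui,k})^{-1}\le \max\{\eta,B\}^{-1}$. The $\eta$ part of $A+\eta$ always cancels against $\max\{\eta,B\}\ge\eta$. For the $A$ part, if $A\le 2B$ then $A/B\le 2$ gives the $\lambda^{D-1}$ bound; if $A>2B$ then $|X_{ui,k}-Y_{ui,k}|=2(A-B)\ge A$, so using only the trivial bound $Z\le 1$ one gets $A\le\sqrt{A\,|X_{ui,k}-Y_{ui,k}|}\le\sqrt{\epsilon\,|X_{ui,k}-Y_{ui,k}|}$. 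This works verbatim for both $Z_X$ and $Z_Y$ because $A,B$ are symmetric in $X,Y$.
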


Before proving Lemma~\ref{lem:close-together-near-1},
note that together with Lemma~\ref{lem:close-together-away-from-1} it
proves \eqref{eq:small-d-goal-1}, and hence Lemma~\ref{lem:close-together}.

\begin{pf*}{Proof of Lemma \ref{lem:close-together-near-1}}
Fix $\theta^* \in(0, 1)$ and take $\lambda< 1$ satisfying Lemma \ref
{lem:expectation-sqrt-gw}.
Since $\varepsilon\le1/4$,
it follows that $X_{ui,k}$ and $Y_{ui,k}$ have the same sign. Without loss
of generality, they are both positive; hence,
if $A = (1 - \min\{X_{ui,k}, Y_{ui,k}\})/2$
and $B = (1 - \max\{X_{ui,k}, Y_{ui,k}\})/2$ then
$0 \le B \le A \le\varepsilon$. Note that $\llvert X_{ui,k} -
Y_{ui,k}\rrvert = 2\llvert A - B\rrvert $.
Now,
\[
\Pr(\sigma_{ui} = + \mid X_{ui,k}, Y_{ui,k}) =
\frac{1 + X_{ui,k}}{2} \ge1 - A,
\]
and so
\[
\Pr(\sigma_u = + \mid X_{ui,k}, Y_{ui,k}) \ge1 -
A - \eta.
\]
Since $X_{ui,k}$ is positive,
\[
h_i^+(X_{L_1(u),k}) = \frac{4}{(1+\theta X_{ui,k})^2} \prod
_{j\ne i} \frac{1-\theta X_{uj,k}}{1+\theta X_{uj,k}} \le4 \prod
_{j\ne i}\frac{1-\theta X_{uj,k}}{1+\theta X_{uj,k}}
\]
and similarly for $Y$. By Lemma~\ref{lem:expectation-sqrt-gw},
if $d^*$ is sufficiently large then
%
%
\begin{eqnarray}
\label{eq:small-d-1} &&\E\Bigl( \sqrt{\llvert X_{ui,k} -
Y_{ui,k}\rrvert h_i^+(X_{L_1(u),k})} \mid D,
X_{ui,k}, Y_{ui,k}, \sigma_u = + \Bigr)\nonumber
\\
&&\qquad \le 4 \E\biggl(\sqrt{\llvert X_{ui,k} - Y_{ui,k}\rrvert\prod
_{j\ne i} \frac
{1-\theta X_{uj,k}}{1+\theta X_{uj,k}}} \Bigm| D,
X_{ui,k}, Y_{ui,k}, \sigma_u = + \biggr)
\\
&&\qquad \le 4 \lambda^{D-1} \sqrt{\llvert X_{ui,k} - Y_{ui,k}
\rrvert},
\nonumber
\end{eqnarray}
since the $X_{uj,k}$ are independent conditioned on $\sigma_u$.
On the other hand, since \mbox{$Z_X \ge0$} we have
%
%
\begin{eqnarray} \label{eq:split-sigma_u}
&& \E(Z_X \mid D, X_{ui,k}, Y_{ui,k})\nonumber
\\
&&\qquad \le
\E(Z_X\mid D, X_{ui,k}, Y_{ui,k},
\sigma_u = +)
\nonumber
\\
&&\quad\qquad{}+ \Pr(\sigma_u = - \mid X_{ui,k},
Y_{ui,k}) \E(Z\mid D, X_{ui,k}, Y_{ui,k},
\sigma_u = -)
\\
&&\qquad \le \E(Z_X\mid D, X_{ui,k}, Y_{ui,k},
\sigma_u = +)
\nonumber
\\
&&\quad\qquad{} + (A + \eta) \E(Z_X\mid D, X_{ui,k},
Y_{ui,k}, \sigma_u = -).\nonumber
\end{eqnarray}
By~\eqref{eq:small-d-1}, the first term of~\eqref{eq:split-sigma_u}
is bounded by $4 \lambda^{D-1} \sqrt{\llvert X_{ui,k} -
Y_{ui,k}\rrvert }$.

Next, we consider the second term of~\eqref{eq:split-sigma_u}; we will consider
the coefficients $A$ and $\eta$ separately.
Now, $Z_X \le\sqrt{\llvert X_{ui,k} - Y_{ui,k}\rrvert
h_i^-(X_{L_1(u),k})}$ and
\[
h_i^-(X_{L_1(u),k)}) = \frac{4}{(1 - \theta X_{ui,k})^2}\prod
_{j \ne i} \frac{1+\theta X_{uj,k}}{1-\theta X_{uj,k}} \le\frac{1}{\max
\{\eta, B\}^2} \prod
_{j \ne i} \frac{1+\theta X_{uj,k}}{1-\theta X_{uj,k}}.
\]
Then Lemma~\ref{lem:expectation-sqrt-gw} implies that for $d^*$
sufficiently large,
%
%
\begin{eqnarray} \label{eq:small-d-2}
&& \E\Bigl(\sqrt{h_i^-(X_{L_1(u),k})} \mid D,
X_{ui,k}, Y_{ui,k}, \sigma_u = - \Bigr)\nonumber
\\
&&\qquad \le
\frac{1}{\max\{\eta,B\}} \prod_{j\ne i} \E\biggl(\sqrt{
\frac{1+\theta X_{uj,k}}{1-\theta X_{uj,k}}} \Bigm| D, \sigma_u = - \biggr)
\\
&&\qquad \le \frac{\lambda^{D-1}}{\max\{\eta,B\}}.\nonumber
\end{eqnarray}
In particular, we have
%
%
\begin{eqnarray}\label{eq:eta-term}
&& \eta\E(Z \mid D, X_{ui,k}, Y_{ui,k}, \sigma_u =-)\nonumber
\\
&&\qquad \le \eta\sqrt{\llvert X_{ui,k} - Y_{ui,k}\rrvert} \E\Bigl(
\sqrt{h_i^-(X_{L_1(u),k})} \mid D, X_{ui,k},
Y_{ui,k}, \sigma_i \Bigr)
\\
&&\qquad \le \lambda^{D-1} \sqrt{\llvert X_{ui,k} - Y_{ui,k}
\rrvert},\nonumber
\end{eqnarray}
which handles the term in~\eqref{eq:split-sigma_u} involving $\eta$.

Next, we consider the term involving $A$.
If $A \le2B$ then we may use~\eqref{eq:small-d-2} for the bound
%
%
\begin{equation}
\label{eq:AleB} \E\Bigl(\sqrt{h_i^-(X_{L_1(u),k})}
\mid D, X_{ui,k}, Y_{ui,k}, \sigma_u = - \Bigr) \le
\frac{\lambda^{D-1}}{B} \le\frac{2\lambda^{D-1}}{A}.
\end{equation}
Alternatively, if
$A \ge2B$ then $\llvert X_{ui,k} - Y_{ui,k}\rrvert = 2\llvert A -
B\rrvert \ge A$; since $Z \le1$,
we have
\[
A \E(Z \mid X_{ui,k}, Y_{ui,k}, \sigma_u = -) \le
A \le\sqrt{A \llvert X_{ui,k} - Y_{ui,k}\rrvert} \le\sqrt{
\varepsilon\llvert X_{ui,k} - Y_{ui,k}\rrvert}.
\]
Combining this with~\eqref{eq:AleB}, we have
\[
A \E(Z \mid X_{ui,k}, Y_{ui,k}, \sigma_u = -) \le
\max\bigl\{2\lambda^{D-1}, \sqrt{\varepsilon} \bigr\} \sqrt{\llvert
X_{ui,k} - Y_{ui,k}\rrvert}
\]
in either case. Combining this with~\eqref{eq:eta-term} and going back
to~\eqref{eq:split-sigma_u}, we have
\[
\E(Z \mid D, X_{ui,k}, Y_{ui,k}) \le\bigl(C
\lambda^{D-1} + \sqrt{\varepsilon} \bigr) \sqrt{\llvert X_{ui,k} -
Y_{ui,k}\rrvert},
\]
which completes the proof.
\end{pf*}

\subsubsection{Putting it together}

Finally, we put together the various cases and prove Proposition~\ref
{prop:large-theta-gw}.
First, fix $\theta^*$ and put $\varepsilon= d^{-4}$. The easy case is
when $\eta\ge c$, where $c$ is the constant from Lemma~\ref{lem:far-apart}.
In this case, Lemma~\ref{lem:close-together-away-from-1} with
$\varepsilon=0$
implies that
\[
\E(Z_X \mid D, X_{ui,k}, Y_{ui,k}) \le C
e^{-c D} \sqrt{\llvert X_{ui,k} - Y_{ui,k}\rrvert}
\]
and similarly for $Z_Y$. Taking the expectation over $X_{ui,k}$ and
applying~\eqref{eq:mvt-g-improved} implies that
%
%
\begin{equation}
\label{eq:eta-large} \E\bigl(\sqrt{\llvert X_{u,k+1} - Y_{u,k+1}
\rrvert} \mid D\bigr) \le C D e^{-cD} \E\sqrt{\llvert X_{ui,k}
- Y_{ui,k}\rrvert}.
\end{equation}

Now consider the case where $\eta\le c$. By Lemma~\ref{lem:far-apart}
(recalling that $\varepsilon= d^{-4}$), we have
\[
\E\bigl(\sqrt{\llvert X_{u,k+1} - Y_{u,k+1}\rrvert}
1_{\Omega^c} \mid D\bigr) \le C d^2 D e^{-cD}
\E1_{\Omega_i^c} \sqrt{\llvert X_{ui,k} - Y_{ui,k}\rrvert}.
\]
By Lemma~\ref{lem:close-together}, we have
\[
\E\bigl(\sqrt{\llvert X_{u,k+1} - Y_{u,k+1}\rrvert}
1_{\Omega} \mid D\bigr) \le C \bigl(d^4 D e^{-cD} +
d^{-2} D \bigr) \E1_{\Omega_i} \sqrt{\llvert X_{ui,k} -
Y_{ui,k}\rrvert}.
\]
Putting these together, we have
%
%
\begin{eqnarray}
\label{eq:eta-small}
&& \E\bigl(\sqrt{\llvert X_{u,k+1} - Y_{u,k+1}
\rrvert} \mid D\bigr)
\nonumber\\[-8pt]\\[-8pt]\nonumber
&&\qquad \le C \bigl(d^4 D e^{-cD} +
d^{-2} D \bigr) \E\sqrt{\llvert X_{ui,k} - Y_{ui,k}
\rrvert}.
\end{eqnarray}
Noting that the right-hand side of~\eqref{eq:eta-small} is larger
than the right-hand side of~\eqref{eq:eta-large}, we see
that~\eqref{eq:eta-small} holds without extra conditions on $\eta$.
Finally,\vspace*{1pt} we integrate out $D$ in~\eqref{eq:eta-small}. Since
$D \sim\Pois(d)$, we have $\E D = d$ and $\E D e^{-cD} \le e^{-c' d}$
for some constant $c'$ depending on $c$. In particular, if $d$ is sufficiently
large (depending on $C$ and $c$, which depend in turn on $\theta^*$) then
\[
C \E\bigl(d^4 D e^{-cD} + d^{-2} D \bigr) \le
\tfrac{1}2,
\]
which proves Proposition~\ref{prop:large-theta-gw}.

\section{From trees to graphs}

In this section, we will give our reconstruction algorithm and prove that
it performs optimally. It will be convenient for us to work with block
models on fixed vertex sets instead of random ones; therefore, let
$\calG(V^+, V^-, p, q)$ denote the random graph on the vertices
$V^+ \cup V^-$ where pairs of vertices within $V^+$ or $V^-$ are connected
with probability $p$ and pairs of vertices spanning $V^+$ and $V^-$
are included with probability $q$. Note that if $V^-$ and $V^+$ are chosen
to be a uniformly random partition of $[n]$ then $\calG(V^+, V^-,
\frac{a}n, \frac{b}n)$
is simply $\calG(n, \frac{a}n, \frac{b}n)$.

Let \texttt{BBPartition} denote the algorithm of~\cite{MoNeSl14},
which satisfies
the following guarantee, where $V^i$ denotes $\{v \in V(G): \sigma_v
= i\}$.
%
%
\begin{theorem}\label{teo:black-box}
Suppose that $G \sim\calG(V^+, V^-, \frac{a}n, \frac{b}n)$, where
$\llvert V^+\rrvert + \llvert V^-\rrvert = n + o(n)$,
$\llvert V^+\rrvert - \llvert V^-\rrvert = O(\sqrt n)$ and $(a-b)^2
> 2(a+b)$.
There exists some $0 \le\delta< \frac{1}2$
such that as $n \to\infty$, {\tt BBPartition} a.a.s. produces a
partition $W^+ \cup W^- = V(G)$ such that $\llvert W^+\rrvert =
\llvert W^-\rrvert +
o(n) = \frac{n}2 + o(n)$ and
$\llvert W^+ \symdiff V^{i}\rrvert \le\delta n$ for some $i \in\{+,
-\}$.

Moreover, {\tt BBPartition} runs in time $O(n^{1 + o(1)})$.
\end{theorem}

%
\begin{remark}
We should point out that~\cite{MoNeSl14} only claims Theorem~\ref
{teo:black-box} when
$V^+$ and $V^-$ are uniformly random partitions of $[n]$; however, one
easily deduces
the result for almost-balanced partitions from the result for uniformly random
partitions: choose $\varepsilon> 0$ so that $\frac{(a-b)^2}{2(a+b)} >
\frac{1}{1-\varepsilon}$.
Given a graph $G$ from $\calG(V^+, V^-, \frac{a}n, \frac{b}n)$, let $H$
be the graph
obtained by deleting all but $\lceil(1-\varepsilon) n \rceil$ vertices
at random from $G$.
If $(W^+, W^-)$ is the
partition of $H$ according to its vertex labels then one can check that the
sizes of $W^+$ and $W^-$ are contiguous with the sizes of a uniformly random
partition of $\lceil(1-\varepsilon) n\rceil$. Hence, the distribution
of $H$ is contiguous
with $\calG(\lceil(1-\varepsilon) n \rceil, \frac{a}n, \frac{b}n)$. The
results of~\cite{MoNeSl14}
then imply that the labels of $H$ can be recovered adequately (i.e., as
claimed in Theorem~\ref{teo:black-box});
by randomly labeling the
vertices of $G$ that were deleted, we recover Theorem~\ref
{teo:black-box} as stated.
\end{remark}

Note that by symmetry, Theorem~\ref{teo:black-box} also
implies that $\llvert W^- \symdiff V^j\rrvert \le\delta n$ for $j \ne
i \in\{+, -\}$.
In other words, {\tt BBPartition} recovers the correct partition up
to a relabeling of the classes and an error bounded away from $\frac
{1}2$. Note that
$\llvert W^+ \symdiff V^{i}\rrvert = \llvert W^- \symdiff
V^j\rrvert $.
Let $\delta(G)$ be the (random) fraction of vertices that are mislabeled.

For $v \in G$ and $R \in\N$, define $B(v, R) = \{u \in G: d(u,v) \le
R\}$
and $S(v,R) = \{u \in G: d(u,v) = R\}$.
If $B(v, R)$ is a tree (which it is a.a.s.), and $\tau$ is a
labeling $\tau$ on its leaves,
we consider the following estimator of $v$'s label: first, take
$K$ large enough so that Proposition~\ref{prop:gw-large-expected-mag}
holds for
$k = K$.
For $u \in S(v, R-K)$, define $Y_{u,K}(\tau)$
as the sign of $S'_k(\tau)$, where $S'_k$ is given as in the proof
of Proposition~\ref{prop:gw-large-expected-mag}.
That is, $Y_{u,K}(\tau)$ is the sign of a weighted sum of the
labeling $\tau$ on $S(v,R)$.
For $k > K$ and $u \in B(v, R-k)$, define $Y_{u,k}(\tau)$ recursively
by $Y_{u,k} = g(Y_{L_1(u), k-1})$, where $g$ is given by~\eqref{eq:g-def}.
Then $Y$ satisfies Assumption~\ref{ass:gw-Y}.

We remark that the reason for taking this two-stage definition of $Y$ is
because we do not necessarily know how much noise there is on the leaves
(i.e., $\delta$), and so we cannot define $Y$
by~\eqref{eq:Y-def}. Defining $Y$ as we have done avoids the need to
know~$\delta$,
while still satisfying the required assumptions.

Before presenting the algorithm, we will mention one issue that we glossed
over in our earlier sketch: since we will run the black-box algorithm
several times, and since the labels $+$ and $-$ are symmetric, we need some
way to break the symmetry between the various runs of the algorithm. We
do this
by holding out a single vertex of high degree (that we call $u_*$) and breaking
symmetry according to the sign of most of its neighbors.

\begin{algorithm}
\caption{Optimal graph reconstruction algorithm}
\label{alg}
\begin{algorithmic}[1]
\State$R \leftarrow\lfloor\frac{1}{20(a+b)} \log n \rfloor$
\State Take $U \subset V$ to be a random subset of size $\lfloor\sqrt
{n} \rfloor$
\State Let $u_* \in U$ be a random vertex in $U$ with at least $\sqrt
{\log n}$ neighbors in $V \setminus U$ \label{alg:u}
\State$W^+_*, W^-_* \leftarrow\varnothing$
\For{$v \in V \setminus U$}
\State{$W^+_v, W^-_v \leftarrow{\tt BBPartition}(G \setminus B(v,
R-1) \setminus U)$}
\label{alg:v-part}
\If{$a > b$}
\State{relabel $W^+_v, W^-_v$ so that $u_*$ has more neighbors in
$W^+_v$ than $W^-_v$ \label{alg:matching}}
\Else
\State{relabel $W^+_v, W^-_v$ so that $u_*$ has more neighbors in
$W^-_v$ than $W^+_v$ \label{alg:matching-alt}}
\EndIf
\State{Define $\xi\in\{+, -\}^{S(v,R)}$ by $\xi_u = i$ if $u \in
W^i_v$ \label{alg:xi-def}}
\State{Add $v$ to $W^{\sgn(Y_{v,R}(\xi))}_*$ \label{alg:label}}
\EndFor
\For{$v \in U$}
\State{Assign $v$ to $W^+_*$ or $W^-_*$ uniformly at random}
\EndFor
\end{algorithmic}
\end{algorithm}

%
\begin{remark}
Our analysis of Algorithm~\ref{alg} will assume that we can compute
with arbitrary
precision numbers in constant time. However, Propositions~\ref
{prop:large-d-recursion-gw}
and~\ref{prop:large-theta-gw} can also be used to analyze an
implementation of
Algorithm~\ref{alg} with finite-precision arithmetic. Indeed, the only
part of
Algorithm~\ref{alg} where continuous quantities appear is in the computation
of $Y_{v,R}$, and the main question in the computation of $Y_{v,R}$ is
whether the
numerical errors accumulate as we repeatedly apply the recursion $g(x)$
defined in~\eqref{eq:g-def}.

Consider the following finite-precision implementation of the
recursion: first, compute
$\widehat Y_{ui,k}$ to the desired precision for all children $i$ of $u$.
Then compute $g(\widehat Y_{u,L_1(k)})$ to arbitrary precision, and finally
define $\widehat Y_{u,k}$ to be
$g(\widehat Y_{u,L_1(k)})$ truncated to the desired precision. Let us
see what
Proposition~\ref{prop:large-d-recursion-gw} has to say about this procedure
(Proposition~\ref{prop:large-theta-gw} has similar consequences for
the other range of parameters):
if $X$ denotes the true magnetizations and the rounding error is
bounded by $\varepsilon$ then
\begin{eqnarray*}
\E(X_{u,k+1} - \widehat Y_{u,k+1})^2 &\le&\E
\bigl(X_{u,k+1} - g(\widehat Y_{L_1(u),k}) + \varepsilon
\bigr)^2
\\
&\le& O(\varepsilon) + \E\bigl(X_{u,k+1} - g(\widehat Y_{L_1(u),k})
\bigr)^2
\\
&\le& O(\varepsilon) + \tfrac{1}2 \E(X_{u,k} - \widehat
Y_{u,k})^2,
\end{eqnarray*}
which implies that the asymptotic accuracy of our finite-precision
scheme is within
$O(\sqrt\varepsilon)$ of optimal.
\end{remark}

As presented, our algorithm is not particularly efficient (although it
does run in polynomial time)
because we need to re-run \texttt{BBPartition} for almost every vertex
in $V$. However,
one can modify Algorithm~\ref{alg} to run in $O(n^{1+o(1)})$ time by
processing $o(n)$ vertices
in each iteration (a similar idea is used in~\cite{MoNeSl14}). Since
vanilla belief propagation is much more efficient than
Algorithm~\ref{alg} and reconstructs (in practice) just as well, we
have chosen not to present
the faster version of Algorithm~\ref{alg}.

%
\begin{theorem}\label{teo:alg-works}
Algorithm~\ref{alg} produces a partition $W^+_* \cup W^-_* = V(G)$
such that
a.a.s. $\llvert W^+_* \symdiff V^i\rrvert \le(1+o(1)) n (1 - p_T(a,b))$
for some $i \in\{+, -\}$.
\end{theorem}

Theorem~\ref{teo:mns:13} implies that for any algorithm,
$\llvert W^+_* \symdiff V^i\rrvert \ge(1-o(1)) n(1 - p_T(a,b))$
a.a.s. Hence, it is enough
to show that $\E\llvert W^+_* \symdiff V^i\rrvert \le(1 + o(1))
n(1-p_T(a,b))$.
Since Algorithm~\ref{alg}
treats every node equally, it is enough to show that there is some $i$
such that for every $v \in V^i$,
%
%
\begin{equation}
\label{eq:graph-tree} \Pr\bigl(v \in W^+_* \bigr) \to p_T(a,b).
\end{equation}
Moreover, since $\Pr(v \in U) \to0$, it is enough to show~\eqref
{eq:graph-tree}
for every $v \in V^i \setminus U$.

The proof of~\eqref{eq:graph-tree} will take the remainder of this section.
First, we will deal with a technicality: in line~\ref{alg:v-part}, we
are applying
\texttt{BBPartition} to the subgraph of $G$ induced by $V \setminus
B(v, R-1) \setminus U$;
call this graph $G_v$.
We need to justify the fact that $G_v$ satisfies the requirements of
Theorem~\ref{teo:black-box}. Now, if $W^+ = V^+ \setminus B(v, R-1)
\setminus U$
and $W^- = V^- \setminus B(v, R-1) \setminus U$ then
$G_v \sim\calG(W^+, W^-, \frac{a}n, \frac{b}n)$. Since
\[
\bigl\llvert W^+\bigr\rrvert+ \bigl\llvert W^-\bigr\rrvert= n - \bigl
\llvert
B(v, R-1)\bigr\rrvert- \lfloor\sqrt n \rfloor
\]
and
\begin{eqnarray*}
\bigl\llvert\bigl\llvert W^+\bigr\rrvert- \bigl\llvert W^-\bigr
\rrvert\bigr
\rrvert&\le&\bigl\llvert\bigl\llvert V^+\bigr\rrvert- \bigl\llvert
V^-\bigr
\rrvert\bigr\rrvert+ \bigl\llvert B(v, R-1)\bigr\rrvert+ \lfloor
\sqrt n \rfloor
\\
&\le& O(\sqrt n) + \bigl\llvert B(v, R-1)\bigr\rrvert,
\end{eqnarray*}
we see that the hypothesis of Theorem~\ref{teo:black-box} is satisfied
as long as
$\llvert B(v, R-1)\rrvert = O(\sqrt n)$. This is indeed the case;
Lemma 4.4 of~\cite
{MoNeSl13} shows that
$\llvert B(v, R)\rrvert = O(n^{1/8})$ for the value of $R$ that we
have chosen.
%
%
\begin{lemma}\label{lem:small-ball}
$\llvert B(v, R)\rrvert = O(n^{1/8})$ a.a.s.
\end{lemma}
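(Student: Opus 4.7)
The plan is to dominate $B(v,R)$ by a Galton--Watson branching process and then apply Markov's inequality.

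First, I would reveal $B(v,R)$ by breadth-first search from $v$. When a frontier vertex $u$ is processed, each as-yet-unrevealed vertex is connected to $u$ independently with probability either $a/n$ or $b/n$, both of which are at most $(a+b)/n$. Hence the number of new children of $u$ is stochastically dominated by $\mathrm{Bin}(n,(a+b)/n)$, and the whole exploration is coupled to a Galton--Watson branching process $\{Z_k\}$ with offspring distribution $\mathrm{Bin}(n,(a+b)/n)$ (mean offspring $a+b$), so that $|B(v,R)|$ is stochastically dominated by $\sum_{k=0}^R Z_k$.

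Second, since $\mathbb{E} Z_k \le (a+b)^k$, we obtain $\mathbb{E}|B(v,R)| \le (R+1)(a+b)^R$. With $R = \lfloor \log n/(20(a+b))\rfloor$, the elementary inequality $(\log x)/x \le 1/e$ (for $x \ge 1$) applied to $x = a+b$ gives
\[
 (a+b)^R \le \exp\!\left(\frac{\log(a+b)}{20(a+b)}\, \log n\right) \le n^{1/(20e)},
\]
so $\mathbb{E}|B(v,R)| \le n^{1/(20e) + o(1)}$. Since $1/(20e) < 1/8$, Markov's inequality yields $\Pr(|B(v,R)| \ge n^{1/8}) \to 0$, which is the claimed bound. (For the degenerate case $a+b \le 1$ the branching process is subcritical and $\mathbb{E}|B(v,R)| = O(\log n)$, well below $n^{1/8}$.)

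The only step that requires any care is the BFS coupling: one must verify that the sequential exposure of edges in BFS order is honestly dominated by an i.i.d.\ Galton--Watson process, even though the pool of unrevealed vertices shrinks as the exploration proceeds. This is a standard monotone coupling --- one can always imagine that at each step a fresh $\mathrm{Bin}(n,(a+b)/n)$ candidate set of children is drawn from the full vertex set and duplicates are discarded --- and I do not anticipate any other obstacle.
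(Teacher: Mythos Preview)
Your argument is correct. The paper does not actually prove this lemma; it simply invokes Lemma~4.4 of~\cite{MoNeSl:13}, so there is no in-paper proof to compare against. Your approach --- stochastic domination of the BFS exploration by a Galton--Watson process with $\mathrm{Bin}(n,(a+b)/n)$ offspring, followed by a first-moment bound and Markov's inequality --- is exactly the standard route and is almost certainly what the cited lemma does as well. One minor remark: you could dominate by $\mathrm{Bin}(n,\max(a,b)/n)$ rather than $\mathrm{Bin}(n,(a+b)/n)$, but since $1/(20e)$ is already well below $1/8$ this slack is harmless; also, the inequality $(\log x)/x \le 1/e$ in fact holds for all $x>0$ (it is negative for $x<1$), so the separate subcritical case is not strictly needed.
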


We conclude, therefore,
that Theorem~\ref{teo:black-box} applies in line~\ref{alg:v-part} of
Algorithm~\ref{alg}.
%
%
\begin{lemma}\label{lem:correlated}
There is some $0 \le\delta< \frac{1}2$ such that
for any $v \in V \setminus U$, there a.a.s. exists some $i \in\{+, -\}
$ such that
$\llvert W_v^+ \symdiff V^i\rrvert \le\delta n$, with $W_v^+$ defined
as in
line~\ref{alg:v-part}.
\end{lemma}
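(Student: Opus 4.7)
The plan is to apply Theorem~\ref{thm:black-box} directly to the induced subgraph $G_v := G[V \setminus B(v, R-1) \setminus U]$ on which \texttt{BBPartition} is executed in line~\ref{alg:v-part}. Writing $W^\pm := V^\pm \setminus B(v, R-1) \setminus U$, the first task is to identify the conditional distribution of $G_v$. In the stochastic block model every pair-variable (whether that pair is an edge) is independent, and the random set $B(v, R-1)$ together with the subgraph induced on it is a measurable function of the labels in $B(v, R-1)$ plus the pair-variables with at least one endpoint in $B(v, R-1)$. Since the labels are i.i.d.\ and the pair-variables disjoint from these are independent of them, conditional on the full structure of $B(v, R-1)$ and on the choice of $U$ the graph $G_v$ is distributed as $\calG(W^+, W^-, a/n, b/n)$.

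Second, I would check the size hypotheses of Theorem~\ref{thm:black-box}. The paragraph immediately preceding the lemma already records
\[
 |W^+| + |W^-| = n - |B(v, R-1)| - \lfloor\sqrt n\rfloor, \qquad \bigl||W^+| - |W^-|\bigr| \le \bigl||V^+| - |V^-|\bigr| + |B(v, R-1)| + \lfloor\sqrt n\rfloor.
\]
Because $|V^+|$ is $\Binom(n, 1/2)$, the CLT (or Chebyshev) gives $\bigl||V^+| - |V^-|\bigr| = O(\sqrt n)$ a.a.s., and Lemma~\ref{lem:small-ball} gives $|B(v, R-1)| = O(n^{1/8})$ a.a.s. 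Together these yield $|W^+| + |W^-| = n + o(n)$ and $\bigl||W^+| - |W^-|\bigr| = O(\sqrt n)$ as required.

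Third, the spectral hypothesis $(a-b)^2 > 2(a+b)$ of Theorem~\ref{thm:black-box} is subsumed by the standing assumption $(a-b)^2 \ge C(a+b)$ of Theorem~\ref{thm:alg-is-optimal}, provided $C \ge 2$. With all the hypotheses verified on the a.a.s.\ event of the previous paragraph, Theorem~\ref{thm:black-box} applied to $G_v$ delivers, with probability $1 - o(1)$, a partition $W_v^+ \cup W_v^- = V \setminus B(v, R-1) \setminus U$ together with some $i \in \{+, -\}$ so that $|W_v^+ \symdiff V^i| \le \delta n$ for a constant $\delta < 1/2$ that depends only on $a, b$.

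The one genuinely subtle point is the first step: one must be careful that conditioning on the random neighborhood $B(v, R-1)$ leaves the complement's law intact as a block model. This is exactly the measurability/independence argument above, and is the reason for running \texttt{BBPartition} on the graph with $B(v, R-1)$ carved out rather than on the full graph $G$. Everything else is a routine matter of assembling the size bounds already available.
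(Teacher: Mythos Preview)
Your proposal is correct and follows essentially the same approach as the paper: verify the size hypotheses of Theorem~\ref{thm:black-box} via Lemma~\ref{lem:small-ball} and the CLT bound on $|V^+|-|V^-|$, then invoke the black-box guarantee on $G_v$. You are in fact more explicit than the paper about why conditioning on $B(v,R-1)$ leaves the induced subgraph on the complement distributed as a block model; the paper merely asserts $G_v \sim \calG(W^+, W^-, a/n, b/n)$ here and only justifies the independence carefully later, in Section~5.2, via the alternative two-stage construction of $G$.
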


\subsection{Aligning the calls to {\tt BBPartition}}
Next, let us discuss in more detail the purpose of $u_*$ and line~\ref
{alg:matching}. Recall that
Algorithm~\ref{alg} relies on multiple applications of \texttt
{BBPartition}, each of which is
only guaranteed to give a good labeling up to swapping $+$ and $-$. In
order to get a consistent
labeling at the end, we need to ``align'' these multiple applications
of \texttt{BBPartition}.

We will break the symmetry between $+$ and $-$ by assuming, from now
on, that $u_*$ is
labeled $+$.
Next, let us note some properties of $u_*$.
%
%
\begin{lemma}\label{lem:u*}
In line~\ref{alg:u}, there a.a.s. exists at least one $u \in U$ with
more than
$\sqrt{\log n}$ neighbors in $V \setminus U$; hence, $u_*$ is well defined.
Moreover, there is some $\eta> 0$ such that a.a.s. at least a $(1 +
\eta)/2$-fraction
of $u_*$'s neighbors
in $V\setminus U$ either are labeled $+$ (if $a > b$) or $-$ (if $a < b$).
Finally, for any $v \in V \setminus U$, $u_*$ a.a.s. has no neighbors
in $B(v, R-1)$.
\end{lemma}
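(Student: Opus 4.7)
Parts 1 and 3 are straightforward. For part 1, for each $u \in U$ let $d_u$ denote the number of neighbors of $u$ in $V \setminus U$. Since distinct $u, u' \in U$ have disjoint sets of candidate edges into $V \setminus U$, the family $\{d_u\}_{u \in U}$ is mutually independent conditional on $\sigma$, and each $d_u$ stochastically dominates $\Binom(n - \lfloor\sqrt n\rfloor, \min(a,b)/n)$, asymptotically $\Pois(\min(a,b))$. A Stirling-based Poisson tail bound yields $p_n := \Pr(d_u \ge \sqrt{\log n}) \ge \exp(-O(\sqrt{\log n}\,\log\log n)) \gg n^{-1/2}$, so $|U|\, p_n \to \infty$ and $(1-p_n)^{|U|} \to 0$, making $u_*$ a.a.s.\ well-defined. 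For part 3, Lemma~\ref{lem:small-ball} gives $|B(v, R-1)| = O(n^{1/8})$ a.a.s., and $\Pr(u_* \in B(v, R-1)) \le \E|B(v, R-1) \cap U| = O(n^{1/8}) \cdot (|U|/n) = O(n^{-3/8})$. Conditional on $u_* \notin B(v, R-1)$ and on the vertex set of $B(v, R-1)$, the potential edges from $u_*$ into $B(v, R-1)$ are disjoint from those that determine the ball, hence independent of it; each has probability at most $\max(a,b)/n$, giving expected count $O(n^{-7/8}) = o(1)$, and Markov's inequality finishes.

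For part 2, without loss of generality assume $a > b$ (the case $a < b$ is symmetric in line~\ref{alg:matching-alt}) and condition on the symmetry-breaking choice $\sigma_{u_*} = +$. The plan is to show that, conditional on $u_* = u$ and $\sigma_u = +$, each $v \in N(u) \cap (V \setminus U)$ satisfies $\sigma_v = +$ with probability $a/(a+b) + o(1)$, approximately independently. Without the conditioning on $u_* = u$, this is the standard Bayes computation: $\Pr(\sigma_v = + \mid \sigma_u = +, uv \in E) = a/(a+b) + O(1/n)$, and these events are conditionally independent across $v$ given the edges incident to $u$. The only subtlety is that the event $u_* = u$ involves the qualification statuses of the other $|U|-1$ vertices in $U$, which depend on labels in $V \setminus U$. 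Here I will use a single-label influence bound: flipping $\sigma_v$ changes any fixed $\Pr(d_{u'} \ge \sqrt{\log n} \mid \sigma)$ by at most $|a-b|/n$ (via a one-coordinate coupling of Poisson--Binomial distributions), which shifts $\Pr(u_* = u \mid \sigma)$ by a relative factor $1 + O(1/(n p_n)) = 1 + o(1)$, since $n p_n \to \infty$. Feeding this back into Bayes gives an $o(1)$-relative shift in the conditional label probability at $v$, and Hoeffding's inequality applied to the $\ge \sqrt{\log n}$ nearly-independent $\Binom(1, a/(a+b))$ labels yields the claimed $(1+\eta)/2$-fraction a.a.s.\ for any $\eta < (a-b)/(a+b)$.

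The main obstacle is precisely the near-independence claim in part 2: one must quantitatively bound the bias introduced by conditioning on the event $u_* = u$, which is a global event depending on the entire random graph through the degree qualifications of the other vertices in $U$. The single-label-influence / Poisson-tail argument sketched above is the cleanest route, but an equally valid alternative is to reveal the randomness in a specific order (labels first, then $U$-to-$(V \setminus U)$ edges, then the remaining edges) and check by direct factorization that each successive conditioning preserves the desired product structure on $\sigma_{N(u_*)}$ up to a multiplicative $1 + o(1)$ error.
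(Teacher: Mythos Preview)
Your proof is correct, but both parts 2 and 3 take a more laborious route than necessary, and the paper's arguments are worth knowing.

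For part 3, the paper avoids your two-case split entirely with a one-line observation: if $u_*$ has a neighbor in $B(v,R-1)$ then $u_* \in B(v,R)$; but $|B(v,R)| = O(n^{1/8})$ a.a.s.\ by Lemma~\ref{lem:small-ball}, and since $U$ is a uniformly random set of size $\lfloor\sqrt n\rfloor$ independent of the graph, $\Pr(U \cap B(v,R) \ne \emptyset) = O(n^{-3/8}) \to 0$. This sidesteps any discussion of which edges are revealed when exploring the ball, and also any concern about how the identity of $u_*$ depends on degrees.

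For part 2, your influence-bound detour is unnecessary. Once you condition on $\sigma$, the edge sets from distinct vertices $u \in U$ into $V\setminus U$ are mutually independent, so the qualification statuses of the other $u' \in U$ are independent of $u$'s own edges. Hence conditioning on $u_* = u$ (given $\sigma$) reduces exactly to conditioning on $d_u \ge \sqrt{\log n}$, with no residual bias to track. The paper then argues directly: conditioned on $d_{u_*} = d$, the neighborhood of $u_*$ can be generated by sequentially drawing $d$ vertices from $V \setminus U$, each landing in $V^+$ with probability $\frac{a}{a+b}\cdot(1+o(1))$; since $d \ge \sqrt{\log n} \to \infty$, the fraction of $+$-labelled neighbors concentrates at $a/(a+b)$. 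Your approach also works, but the conditional-on-$\sigma$ independence is the clean structural fact that makes the ``nearly-independent'' qualifier in your Hoeffding step superfluous. (Minor slip: the relevant product is $|U|\,p_n = \sqrt n\, p_n$, not $n p_n$, though both diverge.)
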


\begin{pf}
For the first claim, note that every $u \in U$ independently has more than
$\Binom(\lceil n (1-\varepsilon/2)\rceil, \frac{\min\{a,b\}}{n})$
neighbors in $V \setminus U$,
and the maximum of $\sqrt n$ such variables is of order $\Theta(\log n
/ \log\log n) \gg\sqrt{\log n}$.

For the second claim, let $d$ be the number of neighbors that $u_*$ has
in $V \setminus U$
and note that $d = O(\log n)$ a.a.s., because the maximum degree of any
vertex in $G$
is $O(\log n)$. Conditioned on $d$, the number of $u_*$'s $+$-labeled neighbors
in $V \setminus U$ is dominated by $\Binom(d, \frac{a}{a+b} \cdot
\frac{\llvert V^+\rrvert -d}{\llvert V^-\rrvert })$;
this is because the neighborhood of $u_*$ may be generated by
sequentially choosing $d$
neighbors without replacement from $V \setminus U$, where a
$+$-labeled neighbor
is chosen with probability $\frac{a}{a+b}$ times the fraction of
$+$-labeled vertices
remaining. Since $\llvert V^+\rrvert = n/2 \pm O(n^{1/2})$ and $d =
o(n)$, we see that
$u_*$ a.a.s. has at least $d(\frac{a}{a+b} - o(1))$ $+$-labeled
neighbors. If $a > b$,
then this verifies the second claim; if $a < b$, then we repeat the
argument with $+$
replaced by $-$.

For the final claim, note that if $u_*$ has a neighbor in $B(v, R-1)$
then $u_* \in B(v, R)$.
But (by Lemma~\ref{lem:small-ball}) $\llvert B(v, R)\rrvert =
O(n^{1/8})$ a.a.s.,
and so
with probability tending to 1, $B(v, R)$ does not intersect $U$ at all;
in particular,
it does not contains~$u_*$.
\end{pf}

From now on, suppose without loss of generality that $\sigma_{u^*} = +$.
Thanks to the previous paragraph and Theorem~\ref{teo:black-box}, we
see that the
relabeling in lines~\ref{alg:matching} and~\ref{alg:matching-alt}
correctly aligns
$W_v^+$ with $V^+$.
%
%
\begin{lemma}\label{lem:correlated-aligned}
There is some $0 \le\delta< \frac{1}2$ such that for any $v \in V
\setminus U$,
$\llvert W_v^+ \symdiff V^+\rrvert \le\delta n$ a.a.s., with $W_v^+$
defined as in
line~\ref{alg:matching} or line~\ref{alg:matching-alt}.
\end{lemma}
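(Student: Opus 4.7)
The plan is to show that the swap decision in line~\ref{alg:matching} (or line~\ref{alg:matching-alt}) is a.a.s.\ correct; the lemma then follows immediately from Lemma~\ref{lem:correlated} (and the symmetric companion noted right after it, so that in either case the pair we end up calling $W_v^+$ differs from $V^+$ in at most $\delta n$ vertices). By symmetry I may assume $\sigma_{u_*} = +$ and treat the case $a > b$; the case $a < b$ is handled in the same way using line~\ref{alg:matching-alt}.

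Write $N(u_*)$ for the neighbors of $u_*$ in $V \setminus U$, set $d := |N(u_*)|$, and let $N^\pm := N(u_*) \cap V^\pm$. By Lemma~\ref{lem:u*}, a.a.s.\ (i) $d \ge \sqrt{\log n}$, (ii) $|N^+| - |N^-| \ge \eta d$ for a constant $\eta = \eta(a,b) > 0$, and (iii) $u_*$ has no neighbors in $B(v, R-1)$. Let $i \in \{+,-\}$ be the index supplied by Lemma~\ref{lem:correlated}, and put $B := W_v^+ \symdiff V^i$, so $|B| \le \delta n$. A direct counting argument (using that every vertex of $N(u_*)$ lies in exactly one of $W_v^+, W_v^-$ by (iii)) yields
\[
    \bigl||W_v^+ \cap N(u_*)| - |W_v^- \cap N(u_*)|\bigr| \ \ge\ |N^+| - |N^-| - 2|B \cap N(u_*)| \ \ge\ \eta d - 2|B \cap N(u_*)|,
\]
and the sign of $|W_v^+ \cap N(u_*)| - |W_v^- \cap N(u_*)|$ agrees with $i$ whenever the right-hand side is positive. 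So it suffices to show $|B \cap N(u_*)| < \eta d / 2$ a.a.s.

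The critical observation is that $B$ is measurable with respect to the subgraph $G_v := G \setminus B(v,R-1) \setminus U$, which contains no edge incident to $u_*$ (since $u_* \in U$). Hence, conditional on $\sigma$ and $G_v$, the indicators $1_{\{w \sim u_*\}}$ for $w \in V(G_v)$ are mutually independent Bernoullis with parameter $a/n$ or $b/n$. In particular, $|B \cap N(u_*)|$ is a sum of independent Bernoullis of total mean at most $\max(a,b)\,|B|/n \le \max(a,b)\delta$, a fixed constant. A standard Chernoff/Poisson-tail estimate then gives $|B \cap N(u_*)| \le \log \log n$ with probability $1 - o(1)$, which is $o(\sqrt{\log n}) \le \eta d/2$ a.a.s. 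The main (and essentially only) substantive ingredient is the conditioning argument that decouples $u_*$'s edges from the output of \texttt{BBPartition}; once this independence is in place the concentration step is routine, and the two cases $i = +$ and $i = -$ follow identically using the symmetric error bound on $W_v^-$ noted after Lemma~\ref{lem:correlated}.
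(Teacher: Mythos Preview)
Your independence claim is the crux and it is not correct as stated. The vertex $u_*$ is \emph{selected} in line~\ref{alg:u} precisely because it has at least $\sqrt{\log n}$ neighbors in $V\setminus U$; once you condition on this selection, the indicators $1_{\{w\sim u_*\}}$ are no longer fresh $\mathrm{Ber}(a/n)$ or $\mathrm{Ber}(b/n)$ variables. In particular, $\E|B\cap N(u_*)|$ is not the constant $\max(a,b)\delta$ you claim: conditionally on $|N^+|,|N^-|$ (and on $\sigma,G_v$), exchangeability makes $N^+$ a uniformly random $|N^+|$-subset of the $+$-labelled vertices of $G_v$, so $|B\cap N(u_*)|$ has mean of order $\delta d$, and since $d\ge\sqrt{\log n}$ this tends to infinity. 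Your bound $|B\cap N(u_*)|\le\log\log n$ is therefore false in general.

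This matters for your counting step too. With the correct order $|B\cap N(u_*)|\approx\delta d$, your inequality $\eta d-2|B\cap N(u_*)|>0$ would require $\eta>2\delta$, and nothing in Theorem~\ref{thm:black-box} or Lemma~\ref{lem:u*} guarantees this relationship (near the threshold $\delta$ is close to $1/2$ while $\eta=(a-b)/(a+b)$ can be small). The loss comes from discarding the positive term $+2|B\cap N^-|$ in
\[
|W_v^+\cap N(u_*)|-|W_v^-\cap N(u_*)|=(|N^+|-|N^-|)-2|B\cap N^+|+2|B\cap N^-|.
\]
The paper avoids both issues by conditioning on $d$ and on the \emph{counts} $|N^+|,|N^-|$ (so that the specific neighbors are uniformly random and independent of $W_v^\pm$), and then computing $N_{++}+N_{-+}$ via hypergeometric means; the resulting excess over $d/2$ is $(2p-1)(\tfrac12-\delta)d+o(d)$, which is positive for every $\delta<1/2$ and every $p>1/2$. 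If you redo your argument with this conditioning and keep both signed terms, you recover exactly this.
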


\begin{pf}
Assume for now that $a > b$.
Just for the duration of this proof, let $W_v^+$ and $W_v^-$ denote the
partition as defined in
line~\ref{alg:v-part} of Algorithm~\ref{alg}, while $\widetilde W_v^+$
and $\widetilde W_v^-$ denote the partition
defined by line~\ref{alg:matching} or line~\ref{alg:matching-alt}.

Recall from Lemma~\ref{lem:u*} that $u_*$ has at least $\sqrt{\log
n}$ neighbors in
$V \setminus B(v, R-1) \setminus U$, of which at least a $(1 + \eta
)/2$-fraction are labeled $+$;
let $d \ge\sqrt{\log n}$ be the number of neighbors that $u_*$ has in
$V \setminus B(v, R-1) \setminus U$, and
let $p \ge(1 + \eta)/2$ be the fraction that are actually labeled $+$.
Note that the labeling $W_v^+, W_v^-$ produced in line~\ref
{alg:v-part} is independent
of the set of $u_*$'s neighbors in $V \setminus B(v, R-1) \setminus U$,
because $W_v^+$ and $W_v^-$ depend only on
edges within $V \setminus B(v, R-1) \setminus U$
and these are independent of the edges adjoining $u_*$. That is,
conditioned on $d$, $p$, $W_v^+$ and $W_v^-$,
the neighbors of $u_*$ can be generated by taking $u_*$'s $+$-labeled
neighbors to be a uniformly random set of
$pd$ $+$-labeled vertices and then taking $u_*$'s $-$-labeled
neighbors to be a uniformly random set
of $(1-p)d$ $-$-labeled vertices.
Hence, if $N_{ij}$ (for $i, j \in\{+, -\}$) is the number
of $u_*$'s neighbors in $V^i \cap W_v^j$ then conditioned on $d$, $p$
and $W_v^+$,
$N_{++}$ is distributed as $\hypergeom(dp, \llvert W_v^+ \cap
V^+\rrvert, \llvert V^+\rrvert )$
and $N_{-+}$ is distributed as $\hypergeom(d(1-p), \llvert W_v^+ \cap
V^-\rrvert, \llvert V^-\rrvert )$.
Since $d = o(\llvert V^+\rrvert ) = o(\llvert V^-\rrvert )$ and $d
\to\infty$ a.a.s., we have
\begin{eqnarray*}
N_{++} &\ge&\bigl(1 - o(1) \bigr) dp \frac{\llvert W_v^+ \cap
V^+\rrvert }{\llvert V^+\rrvert } =
\bigl(1-o(1) \bigr) \frac{2dp \llvert W_v^+ \cap V^+\rrvert }{n},
\\
N_{-+} &\ge&\bigl(1 - o(1) \bigr) d(1-p) \frac{\llvert W_v^+ \cap
V^-\rrvert }{\llvert V^-\rrvert } =
\bigl(1-o(1) \bigr) \frac{2d(1-p) \llvert W_v^+ \cap V^-\rrvert }{n}.
\end{eqnarray*}
Adding these together, we have
%
%
\begin{equation}
\label{eq:total+} N_{++} + N_{-+} = \bigl(1-o(1) \bigr)
\frac{d}{n} \bigl(\alpha+ \beta+ (2p-1) (\alpha- \beta) \bigr),
\end{equation}
where $\alpha= \llvert W_v^+ \cap V^+\rrvert $ and $\beta= \llvert
W_v^+ \cap V^-\rrvert $.

Now, Lemma~\ref{lem:correlated} admits two cases: if $i = +$ then
\begin{eqnarray*}
\delta n &\ge& \bigl\llvert W_v^+ \symdiff V^+\bigr\rrvert= \bigl
\llvert W_v^+ \cap V^-\bigr\rrvert+ \bigl\llvert W_v^-
\cap V^+\bigr\rrvert
\\
&=& \bigl\llvert W_v^+ \cap V^-\bigr\rrvert+
\frac{n}2 + o(n) - \bigl\llvert W_v^+ \cap V^+\bigr\rrvert,
\end{eqnarray*}
and we conclude that
$\alpha- \beta\ge(\frac{1}2 - \delta- o(1))) n$.
A similar argument when $i = -$ in Lemma~\ref{lem:correlated}
shows that in that case $\alpha- \beta\le-(\frac{1}2 - \delta- o(1)) n$.
In either case, $\alpha+ \beta= (1 + o(1)) n/2$.

If $i=+$ in Lemma~\ref{lem:correlated},
then since $p - 1/2 \ge\eta/2$,~\eqref{eq:total+} implies
\[
N_{++} + N_{-+} = \bigl(1 - o(1) \bigr) d \biggl(
\frac{1}2 + \frac{(\sfrac{1}2 -
\delta) \eta}{2} \biggr)
\]
a.a.s. Since $N_{++} + N_{-+} + N_{+-} + N_{--} = d$, we have in particular
$N_{++} + N_{-+} > N_{+-} + N_{--}$ a.a.s., and so $u_*$ has most of
its neighbors in $W_v^+$.
Hence, $\widetilde W_v^+ = W_v^+$ and so Lemma~\ref{lem:correlated} with
$i=+$ implies the conclusion
of Lemma~\ref{lem:correlated-aligned} holds.
On the other hand, if $i=-$ in Lemma~\ref{lem:correlated} then $\alpha
- \beta< -(\frac{1}2 - \delta) n$;
by~\eqref{eq:total+}, $N_{+-} + N_{--} > N_{++} + N_{-+}$. Then $u_*$
has most of its neighbors in
$W_v^-$ and so $\widetilde W_v^+ = W_v^-$. By Lemma~\ref{lem:correlated}
with $i=-$, the conclusion
of Lemma~\ref{lem:correlated-aligned} holds.

Finally, we mention the case $a < b$: essentially the same argument
holds except that instead
of $p \ge(1 + \eta)/2$ we have $p \le(1-\eta)/2$. Then $i=+$
implies that $u_*$ has most of
its neighbors in $W_v^-$, while $i=-$ implies that $u_*$ has most of
its neighbors
in $W_v^+$.
\end{pf}

\subsection{Calculating $v$'s label}
To complete the proof of~\eqref{eq:graph-tree}
(and hence Theorem~\ref{teo:alg-works}), we need to discuss the
coupling between
graphs and trees. We will invoke a lemma from~\cite{MoNeSl13} which
says that a neighborhood in $G$ can be coupled with a multi-type
branching process of the sort that we considered in Section~\ref{sec:gw}.
Indeed, let $T$ be the Galton--Watson tree of Section~\ref{sec:gw}
[with $d = (a+b)/2$] and let $\sigma'$ be a labeling on it, given by running
the two-state broadcast process with parameter $\eta= b/(a+b)$.
We write $T_R$ for $T \cap\N^R$; that is, the part of $T$ which has
depth at most $R$.

%
\begin{lemma}\label{lem:coupling}
For any fixed $v \in G$,
there is a coupling between $(G, \sigma)$ and $(T, \sigma')$ such that
$(B(v, R), \sigma_{B(v,R)}) = (T_R, \sigma'_{T_R})$ a.a.s.
\end{lemma}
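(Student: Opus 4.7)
The plan is to build the coupling by a joint breadth-first exploration of $B(v,R)$ in $G$ and of $T_R$, matching generation by generation, and then show the coupling succeeds with probability $1-o(1)$ using the fact that $|B(v,R)|$ stays much smaller than $\sqrt n$.

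First I would recall the standard exploration of a neighborhood in the stochastic block model. Starting from $v$, whose label is $\sigma_v$ (marginally uniform on $\{+,-\}$, matching the distribution of $\sigma'_\rho$), I reveal the edges from $v$ to the remaining vertices one class at a time: the number of same-labelled neighbors is $\mathrm{Binom}(|V^{\sigma_v}|-1, a/n)$ and the number of opposite-labelled neighbors is $\mathrm{Binom}(|V^{-\sigma_v}|, b/n)$, independently; the actual neighbors are then sampled uniformly without replacement from each class. On the tree side, under the broadcast process with parameter $\eta = b/(a+b)$ and $d = (a+b)/2$, the number of same-labelled children of $\rho$ is $\mathrm{Pois}(a/2)$ and the number of opposite-labelled children is $\mathrm{Pois}(b/2)$, independently. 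I iterate this exploration over successive generations, each time revealing edges only from the current frontier to the still-unexplored vertices, and match it level by level with the tree.

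The coupling then fails only through one of two events: (i) a mismatch in the offspring counts between the $\mathrm{Binom}$ variables on the graph side and the $\mathrm{Pois}$ variables on the tree side; (ii) a \emph{collision}, i.e.\ an exploration step in $G$ that hits a vertex already revealed (creating a cycle inside $B(v,R)$ and breaking the tree structure). Both bad events are controlled via Lemma~\ref{lem:small-ball}, which gives $|B(v,R)| = O(n^{1/8})$ a.a.s. Conditional on this, for each vertex $u$ expanded during the exploration, the number of remaining same-class vertices is $|V^{\sigma_u}|-O(n^{1/8}) = n/2 - O(\sqrt n)$, and similarly for the opposite class. Standard Binomial-to-Poisson coupling (e.g.\ Le Cam's inequality) then lets one couple the offspring of $u$ with independent $\mathrm{Pois}(a/2)$ and $\mathrm{Pois}(b/2)$ variables with total variation error $O(1/n)$; summed over the $O(n^{1/8})$ vertices of the ball this gives total error $O(n^{-7/8})=o(1)$.

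For collisions, each new neighbor sampled during the exploration lies in an already-revealed set of size $O(n^{1/8})$, so it coincides with a previously revealed vertex with probability $O(n^{1/8}/n) = O(n^{-7/8})$; over the $O(n^{1/8})$ edges revealed in the exploration the union bound gives a collision probability $O(n^{-3/4}) = o(1)$. Combining the offspring-matching and no-collision bounds, the exploration of $B(v,R)$ matches exactly a truncated Galton-Watson tree with the prescribed offspring distributions and broadcast labels, which is precisely $(T_R, \sigma'_{T_R})$. The only subtle step is the a.a.s.\ bound $|B(v,R)|=O(n^{1/8})$, but this is already provided by Lemma~\ref{lem:small-ball} (quoted from~\cite{MoNeSl:13}), so the main work is just assembling the two error estimates above.
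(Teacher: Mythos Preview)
Your proof is essentially correct and is the standard breadth-first exploration argument for coupling sparse block-model neighborhoods to Galton--Watson trees. The paper itself does not prove this lemma: it simply invokes it as a result from~\cite{MoNeSl:13} (see the sentence immediately preceding the lemma), so there is no ``paper's proof'' to compare against beyond that citation. Your sketch is exactly the argument one would find in that reference.

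One small arithmetic correction: the per-vertex total variation error in coupling $\mathrm{Binom}(m,a/n)$ with $\mathrm{Pois}(a/2)$ is $O(n^{-1/2})$, not $O(n^{-1})$. Le Cam's bound gives $d_{TV}(\mathrm{Binom}(m,p),\mathrm{Pois}(mp)) = O(mp^2) = O(1/n)$, but you then need $d_{TV}(\mathrm{Pois}(mp),\mathrm{Pois}(a/2)) \le |mp - a/2|$, and since $m = n/2 + O(\sqrt n)$ this contributes $O(n^{-1/2})$. This does not affect the conclusion: summed over $O(n^{1/8})$ vertices the total error is $O(n^{-3/8}) = o(1)$, still negligible.
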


Armed with Lemma~\ref{lem:coupling}, we will consider a slightly different
method of generating $G$, which is nevertheless equivalent to the original
model in the sense that the new method and the old method may be coupled
a.a.s.
In the new construction, we begin by assigning labels to $V(G)$
uniformly at
random. Beginning with a fixed vertex $v$, we construct $B(v,R-1)$
by drawing a Galton--Watson
tree of depth $R-1$ rooted at $v$, with labels distributed according to the
broadcast process.
On the vertices that remain [i.e., those that were not used in $B(v,R-1)$],
we construct a graph $G'$ according to the stochastic block model
with parameters $a/n$ and $b/n$. Finally, we join $B(v,R-1)$
to the rest of the graph: for every vertex $u \in S(v,R-1)$,
we draw $\Pois(a/(a+b))$ vertices at random from $G'$ with label
$\sigma_u$
and $\Pois(b/(a+b))$ vertices from $G'$ with label $-\sigma_u$;
we connect all these vertices to $u$.
It follows from Lemma~\ref{lem:coupling} that this construction is equivalent
to the original construction. It also follows from Lemma~\ref{lem:small-ball}
that $\llvert G'\rrvert \ge n - O(n^{1/8})$ a.a.s.

The advantage of the construction above is that it becomes obvious that
the edges of
$G' = G \setminus B(v,R-1) \setminus U$ are independent of both
$B(v,R-1)$ and the edges
joining $B(v,R-1)$ to $G'$. Since $W_v^+$ and $W_v^-$ are both functions
of $G'$ only, it follows that $B(v,R-1)$ and its edges to $G'$ are also
independent of $W_v^+$ and $W_v^-$. Using this observation, we can
improve Lemma~\ref{lem:coupling} to include the noisy labels. In particular,
we claim that the labeling $\xi$ produced in line~\ref{alg:xi-def}
of Algorithm~\ref{alg} has the same distribution as the noisy labeling
$\tau$ of the noisy broadcast process.

In view of Lemma~\ref{lem:coupling}, it suffices to condition on
$\sigma$,
$B(v,R-1)$ and $G'$, and to show that the conditional distribution of
$\xi$ is essentially the same as the conditional distribution of
$\tau$ given $T$ and $\sigma'$ in the noisy broadcast process.
Since the edges joining $B(v, R-1)$ to $G'$ are independent of $W_v^+$
and $W_v^-$, for any $u \in S(v, R-1)$ with $\sigma_u=+$ we have
\begin{eqnarray*}
\# \bigl\{w \sim u: w \in G', \sigma_w = +,
\xi_w = + \bigr\} &\sim&\Binom\biggl(\bigl\llvert V^+ \cap
W_v^+\bigr\rrvert, \frac{a}{n} \biggr),
\\
\# \bigl\{w \sim u: w \in G', \sigma_w = +,
\xi_w = - \bigr\} &\sim&\Binom\biggl(\bigl\llvert V^+ \cap
W_v^-\bigr\rrvert, \frac{a}{n} \biggr),
\\
\# \bigl\{w \sim u: w \in G', \sigma_w = -,
\xi_w = - \bigr\} &\sim&\Binom\biggl(\bigl\llvert V^- \cap
W_v^-\bigr\rrvert, \frac{b}{n} \biggr),
\\
\# \bigl\{w \sim u: w \in G', \sigma_w = -,
\xi_w = + \bigr\} &\sim&\Binom\biggl(\bigl\llvert V^- \cap
W_v^+\bigr\rrvert, \frac{b}{n} \biggr).
\end{eqnarray*}
Moreover, the random variables above are independent as $u$ ranges
over $S(v,R-1)$. Now, if we define $\delta= \frac{1}n \llvert V^+
\symdiff W_v^+\rrvert $
then $\Binom(\llvert V^+ \cap W_v^+\rrvert, a/n)$ and $\Pois(a(1-\delta
)/2)$ are
at total variation distance at most $O(n^{-1/2})$; here, we are using
the fact that $\llvert V^+ \cap W_v^+\rrvert = (1-\delta)n/2 \pm
O(n^{1/2})$, which
follows because $V^+, V^-$ are an equipartition of $V(G)$ and
$W_v^+, W_v^-$ are an equipartition of $V(G')$, which contains
all but at most $O(\sqrt n)$ vertices of $G$.
Similarly, we have
\begin{eqnarray*}
\# \bigl\{w \sim u: w \in G', \sigma_w = +,
\xi_w = + \bigr\} & \stackrel{d} {\approx} &\Pois\bigl(a(1-\delta)/2
\bigr),
\\
\# \bigl\{w \sim u: w \in G', \sigma_w = +,
\xi_w = - \bigr\} & \stackrel{d} {\approx} &\Pois(a\delta/2),
\\
\# \bigl\{w \sim u: w \in G', \sigma_w = -,
\xi_w = - \bigr\} & \stackrel{d} {\approx} &\Pois\bigl(b(1-\delta)/2
\bigr),
\\
\# \bigl\{w \sim u: w \in G', \sigma_w = -,
\xi_w = + \bigr\} & \stackrel{d} {\approx} &\Pois(b\delta/2),
\end{eqnarray*}
where ``$\stackrel{d}{\approx}$'' means that the distributions
are at total variation distance at most $O(n^{-1/2})$. Note
that the distributions on the right-hand side are exactly the distributions
of the noisy labels $\tau$ under the noisy broadcast process.
By a similar argument for $\sigma_u=-$, and a union bound over
the $O(n^{1/8})$ choices for $u$,
we see that the joint distribution of $B(v, R)$ and $\{\xi_u: u \in
S(v,R)\}$
a.a.s. the same as the joint distribution of $T_R$ and $\{\tau_u: u
\in
\partial T_R\}$. Hence, by Theorem~\ref{teo:gw-main},
\[
\lim_{n \to\infty} \Pr\bigl(Y_{v,R}(\xi) =
\sigma_v \bigr) = p_T(a, b).
\]
By line~\ref{alg:label} of Algorithm~\ref{alg}, this completes the
proof of~\eqref{eq:graph-tree}.

\setcounter{equation}{0}
\begin{appendix}
\section*{Appendix: Bounds on \texorpdfstring{$\mathbb{E}\sqrt{\frac{1-\theta X}{1+\theta X}}$}{$\mathbb{E}sqrt{\frac{1-theta X}{1+theta X}}$}}\label{app}
Because of the form of the recursion~\eqref{eq:g-def}, at various points
in our analysis we require bounds on quantities of the form
$\E\sqrt{\frac{1-\theta X}{1+\theta X}}$, under various
assumptions on~$X$. These estimates are
elementary but tedious to check, and so we have collected them here.

\begin{pf*}{Proof of Lemma~\ref{lem:expectation-sqrt}}
By Lemma~\ref{lem:large-mag}, we have
\begin{eqnarray*}
\Pr(X_{ui,k} \ge1 - \eta\alpha t \mid\sigma_u = +) &\ge& \Pr
(X_{ui,k} \ge1 - \eta\alpha t \mid\sigma_{ui} = +) - \eta
\\
&\ge& 1 - t^{-1} - \eta,
\end{eqnarray*}
where $\alpha= C / (\theta^2 d)$ can be taken arbitrarily small if we
require $\theta^2 d$
to be large.

Fix some $\varepsilon= \varepsilon(\theta^*) > 0$ to be determined later. Take
$t = \varepsilon^{-1} \eta^{-3/4}$ so that
\[
\Pr\biggl(X \ge1 - \frac{\alpha\eta^{1/4}}{\varepsilon} \biggr) \ge1 -
\varepsilon
\eta^{3/4} - \eta.
\]
Now, suppose that $\alpha$ is small enough so that
$\alpha\varepsilon^{-1} \le\varepsilon$. Then
%
%
\begin{equation}
\label{eq:expectation-sqrt-large-mag} \Pr\bigl(X \ge1 - \varepsilon\eta
^{1/4} \bigr) \ge1 -
\varepsilon\eta^{3/4} - \eta.
\end{equation}

Now consider the function
\[
f(x):= \sqrt{\frac{1-\theta x}{1+\theta x}}.
\]
Note that $f(x)$ is decreasing in $x$, and hence
\[
\E f(X) \le f(s) \Pr(X \ge s) + f(-1) \Pr(X \le s),
\]
for any random variable $X$ supported on $[-1, 1]$ and for any $s \in
[-1, 1]$.
Applying this for $s = 1 - \varepsilon\eta^{1/4}$, we have
[by~\eqref{eq:expectation-sqrt-large-mag}]
%
%
\begin{equation}
\label{eq:expectation-decomposed} \E f(X) \le f \bigl(1 - \varepsilon\eta
^{1/4} \bigr)
\bigl(1 - \varepsilon\eta^{3/4} - \eta\bigr) + f(-1) \bigl(\varepsilon
\eta^{3/4} + \eta\bigr).
\end{equation}
We will now check that if $\eta\le\frac{1-\theta^*}{2} < 1/2$ then
each term on the right-hand side of~\eqref{eq:expectation-decomposed}
can be made strictly smaller than $1/2$,
and also smaller than $2\eta^{1/4}$,
by taking $\varepsilon= \varepsilon(\theta^*)$ small enough.
This will complete the proof of the lemma.

We consider the term involving $f(-1)$ first:
%
%
\begin{equation}
\label{eq:expectation-decomposed-1} f(-1) \bigl(\varepsilon\eta^{3/4} +
\eta\bigr) = \varepsilon
\eta^{1/4} \sqrt{1-\eta} + \sqrt{\eta(1-\eta)}.
\end{equation}
On the interval $\eta\in[0, \frac{1-\theta^*}{2}]$, $\sqrt{\eta
(1-\eta)}$
is bounded away from $1/2$, and $\eta^{1/4}{\sqrt{1-\eta}}$ is bounded
above. Hence,~\eqref{eq:expectation-decomposed-1} is bounded away from $1/2$
as long as $\varepsilon(\theta^*)$ is small enough.
On the other hand,~\eqref{eq:expectation-decomposed-1}
is also bounded by $2\eta^{1/4}$ as long as $\varepsilon\le1$.

Next, we consider the $f(1-\varepsilon\eta^{1/4})$ term
of~\eqref{eq:expectation-decomposed}. Note that
$\theta(1 - \varepsilon\eta^{1/4}) \ge1 - 2\eta- \varepsilon\eta
^{1/4}$ and so
\[
f \bigl(1 - \varepsilon\eta^{1/4} \bigr) \le\sqrt{\frac{2\eta+ \varepsilon\eta^{1/4}}{
2 - (2 \eta+ \varepsilon\eta^{1/4})}} \le
\sqrt{\frac{\eta}{1-\eta}} + C \varepsilon\eta^{1/4},
\]
where the second inequality follows from applying a first-order Taylor
expansion to
the function $\sqrt{x/(1-x)}$ near $x=\eta$. Here, $C$ is a universal constant
because the assumptions $\eta\le1/2$ and $\varepsilon\le1$ ensure that
the derivative
of $\sqrt{x/(1-x)}$ is universally bounded on the interval of
interest. Thus,
%
%
\begin{eqnarray} \label{eq:expectation-decomposed-2}
f \bigl(1-\varepsilon\eta^{1/4} \bigr) \bigl(1 - \varepsilon
\eta^{1/4} - \eta\bigr) &\le& f \bigl(1-\varepsilon\eta^{1/4}
\bigr) (1-\eta)
\nonumber\\[-8pt]\\[-8pt]\nonumber
&\le&\sqrt{\eta(1-\eta)} + C \varepsilon\eta^{1/4} (1-\eta).\nonumber
\end{eqnarray}
As before, on the interval $\eta\in[0, \frac{1-\theta^*}{2}]$,
$\sqrt{\eta(1-\eta)}$ is bounded away from $1/2$,
and $\eta^{1/4}(1-\eta)$ is bounded
above. Hence,~\eqref{eq:expectation-decomposed-2} is bounded away from $1/2$
as long as $\varepsilon(\theta^*)$ is small enough.
On the other hand,~\eqref{eq:expectation-decomposed-2} is also smaller
than $2 \eta^{1/4}$ as long as $\varepsilon$ is small enough compared to $C$.
\end{pf*}

\begin{pf*}{Proof of Lemma~\ref{lem:expectation-sqrt-gw}}
Fix some $\varepsilon= \varepsilon(\theta^*) > 0$ to be determined. If
$\theta^2 d$
is sufficiently large compared to $\varepsilon$, Proposition~\ref
{prop:gw-large-expected-mag}
implies that
\begin{eqnarray*}
\Pr(X_{ui,k} \ge1-\varepsilon\mid\sigma_{u} = +) &\ge& 1-
\varepsilon- \Pr(\sigma_{ui} = - \mid\sigma_u = +) \ge1 -
\varepsilon- \eta.
\end{eqnarray*}

Now, if $f$ is any decreasing function then
%
%
\begin{eqnarray}
\label{eq:expectation-sqrt-gw-1} \E f(X) &\le& f(1-\varepsilon) \Pr(X \ge
1-\varepsilon)\nonumber
\\
&&{} + f(0) \Pr(0 \le X < 1-\varepsilon)
\\
&&{} + f(-1) \Pr(X < 0).
\nonumber
\end{eqnarray}
We will apply this with $f(x) = \sqrt{\frac{1-\theta x}{1+\theta
x}}$; note that
$f(0) = 1$ and $f(-1) = \sqrt{(1-\eta)/\eta}$, where $\eta= \frac
{1-\theta}{2}$.

Now, we consider two regimes. If $\sqrt\eta\ge\theta^*/10$, we bound
%
%
\begin{eqnarray}
\label{eq:expectation-sqrt-gw-2} \E\bigl(f(X_{ui,k}) \mid\sigma_u = +
\bigr) &\le&\Pr(X_{ui,k} \ge1-\varepsilon\mid\sigma_u = +) f(1-
\varepsilon)\nonumber
\\
&&{}+ \Pr(X_{ui,k} < 1-\varepsilon\mid\sigma_u = +) f(-1)
\nonumber\\[-8pt]\\[-8pt]\nonumber
&\le&(1-\varepsilon- \eta) f(1-\varepsilon) + \frac{\varepsilon+ \eta
}{\sqrt{\eta}}
\nonumber
\\
&\le&(1 - \eta)f(1-\varepsilon) + \sqrt{\eta(1-\eta)} + \frac
{10\varepsilon}{\theta^*}.
\nonumber
\end{eqnarray}
Now, $f(1-\varepsilon) = \frac{\eta}{1-\eta} + O(\varepsilon)$, and so
\[
\E\bigl(f(X_{ui,k}) \mid\sigma_u = + \bigr) \le2\sqrt{
\eta(1-\eta)} + O(\varepsilon),
\]
where the constants in $O(\varepsilon)$ depend on $\theta^*$. Since
$2\sqrt{\eta(1-\eta)}$ is bounded
away from $1$ while $\eta$ is bounded away from $1/2$, it follows that
for small enough $\varepsilon$
(depending on $\theta^*$), $\E(f(X_{ui,k}) \mid\sigma_u = +)$ is
bounded away from 1.

On the other hand, if $\sqrt\eta\le\theta^* / 10$ then we
use~\eqref{eq:expectation-sqrt-gw-1} and the fact (from Lemma~\ref
{lem:wrong-sign}) that
$\Pr(X_{ui,k} < 0 \mid\sigma_u = +) \le2\eta$ to bound
\begin{eqnarray*}
\E f(X) &\le&(1-\varepsilon) f(1-\varepsilon) + \varepsilon f(0) + 2\eta f(-1)
\\
&\le& f(1-\varepsilon) + \varepsilon+ 2\sqrt{\eta}.
\end{eqnarray*}
Now, if $\varepsilon\le\frac{1}2$ then $f(1-\varepsilon) \le\sqrt
{1-\theta^*/2}
\le1-\theta^*/4$, so
\[
\E f(X) \le1-\theta^*/4 + \varepsilon+ 2\sqrt{\eta} \le1 - \frac
{\theta^*}{20} +
\varepsilon,
\]
which is bounded away from 1 if $\varepsilon$ is small enough.
\end{pf*}
\end{appendix}

\section*{Acknowledgment}
The authors thank Jiaming Xu for his careful reading of the manuscript
and his helpful comments and corrections.


%

\printaddresses

\begin{thebibliography}{28}

\bibitem{AlonKahale97}
%
\begin{barticle}[mr]
\bauthor{\bsnm{Alon},~\bfnm{Noga}\binits{N.}} \AND
\bauthor{\bsnm{Kahale},~\bfnm{Nabil}\binits{N.}}
(\byear{1997}).
\btitle{A spectral technique for coloring random {$3$}-colorable graphs}.
\bjournal{SIAM J. Comput.}
\bvolume{26}
\bpages{1733--1748}.
\bid{doi={10.1137/S0097539794270248}, issn={0097-5397}, mr={1484153}}
\end{barticle}
%
\bptok{imsref}%
\endbibitem

\bibitem{BC09}
%
\begin{barticle}[auto:parserefs-M02]
\bauthor{\bsnm{Bickel},~\bfnm{P.~J.}\binits{P.~J.}} \AND
\bauthor{\bsnm{Chen},~\bfnm{A.}\binits{A.}}
(\byear{2009}).
\btitle{A nonparametric view of network models and {N}ewman--{G}irvan
and other modularities}.
\bjournal{Proc. Natl. Acad. Sci. USA}
\bvolume{106}
\bpages{21068--21073}.
\end{barticle}
%
\bptok{imsref}%
\endbibitem

\bibitem{BlRuZa95}
%
\begin{barticle}[mr]
\bauthor{\bsnm{Bleher},~\bfnm{P.~M.}\binits{P.~M.}},
\bauthor{\bsnm{Ruiz},~\bfnm{J.}\binits{J.}} \AND
\bauthor{\bsnm{Zagrebnov},~\bfnm{V.~A.}\binits{V.~A.}}
(\byear{1995}).
\btitle{On the purity of the limiting {G}ibbs state for the {I}sing
model on the {B}ethe lattice}.
\bjournal{J. Stat. Phys.}
\bvolume{79}
\bpages{473--482}.
\bid{doi={10.1007/BF02179399}, issn={0022-4715}, mr={1325591}}
\end{barticle}
%
\bptok{imsref}%
\endbibitem

\bibitem{BlumSpencer95}
%
\begin{barticle}[mr]
\bauthor{\bsnm{Blum},~\bfnm{Avrim}\binits{A.}} \AND
\bauthor{\bsnm{Spencer},~\bfnm{Joel}\binits{J.}}
(\byear{1995}).
\btitle{Coloring random and semi-random {$k$}-colorable graphs}.
\bjournal{J.~Algorithms}
\bvolume{19}
\bpages{204--234}.
\bid{doi={10.1006/jagm.1995.1034}, issn={0196-6774}, mr={1344544}}
\end{barticle}
%
\bptok{imsref}%
\endbibitem

\bibitem{B87}
%
\begin{binproceedings}[auto:parserefs-M02]
\bauthor{\bsnm{Boppana},~\bfnm{R.~B.}\binits{R.~B.}}
(\byear{1987}).
\btitle{Eigenvalues and graph bisection: An average-case analysis}.
In \bbooktitle{28th Annual Symposium on Foundations of Computer Science}
\bpages{280--285}.
\bpublisher{IEEE},
\blocation{Los Angeles, CA}.
\end{binproceedings}
%
\bptok{imsref}%
\endbibitem

\bibitem{BCMR06}
%
\begin{binproceedings}[auto:parserefs-M02]
\bauthor{\bsnm{Borgs},~\bfnm{C.}\binits{C.}},
\bauthor{\bsnm{Chayes},~\bfnm{J.}\binits{J.}},
\bauthor{\bsnm{Mossel},~\bfnm{E.}\binits{E.}} \AND
\bauthor{\bsnm{Roch},~\bfnm{S.}\binits{S.}}
(\byear{2006}).
\btitle{The {K}esten--{S}tigum reconstruction bound is tight for roughly
symmetric binary channels}.
In \bbooktitle{47th Annual IEEE Symposium on Foundations of Computer Science (FOCS'06)}
\bpages{518--530}.
\end{binproceedings}
%
\bptok{imsref}%
\endbibitem

\bibitem{BCLS87}
%
\begin{barticle}[mr]
\bauthor{\bsnm{Bui},~\bfnm{T.~N.}\binits{T.~N.}},
\bauthor{\bsnm{Chaudhuri},~\bfnm{S.}\binits{S.}},
\bauthor{\bsnm{Leighton},~\bfnm{F.~T.}\binits{F.~T.}} \AND
\bauthor{\bsnm{Sipser},~\bfnm{M.}\binits{M.}}
(\byear{1987}).
\btitle{Graph bisection algorithms with good average case behavior}.
\bjournal{Combinatorica}
\bvolume{7}
\bpages{171--191}.
\bid{doi={10.1007/BF02579448}, issn={0209-9683}, mr={0905164}}
\end{barticle}
%
\bptok{imsref}%
\endbibitem

\bibitem{CO10}
%
\begin{barticle}[mr]
\bauthor{\bsnm{Coja-Oghlan},~\bfnm{Amin}\binits{A.}}
(\byear{2010}).
\btitle{Graph partitioning via adaptive spectral techniques}.
\bjournal{Combin. Probab. Comput.}
\bvolume{19}
\bpages{227--284}.
\bid{doi={10.1017/S0963548309990514}, issn={0963-5483}, mr={2593622}}
\end{barticle}
%
\bptok{imsref}%
\endbibitem

\bibitem{CK01}
%
\begin{barticle}[mr]
\bauthor{\bsnm{Condon},~\bfnm{Anne}\binits{A.}} \AND
\bauthor{\bsnm{Karp},~\bfnm{Richard~M.}\binits{R.~M.}}
(\byear{2001}).
\btitle{Algorithms for graph partitioning on the planted partition model}.
\bjournal{Random Structures Algorithms}
\bvolume{18}
\bpages{116--140}.
\bid{doi={10.1002/1098-2418(200103)18:2<116::AID-RSA1001>3.0.CO;2-2},
issn={1042-9832}, mr={1809718}}
\end{barticle}
%
\bptok{imsref}%
\endbibitem

\bibitem{Z2011}
%
\begin{barticle}[auto:parserefs-M02]
\bauthor{\bsnm{Decelle},~\bfnm{A.}\binits{A.}},
\bauthor{\bsnm{Krzakala},~\bfnm{F.}\binits{F.}},
\bauthor{\bsnm{Moore},~\bfnm{C.}\binits{C.}} \AND
\bauthor{\bsnm{Zdeborov{\'{a}}},~\bfnm{L.}\binits{L.}}
(\byear{2011}).
\btitle{Asymptotic analysis of the stochastic block model for modular
networks and its algorithmic applications}.
\bjournal{Physics Review E}
\bvolume{84}
\bpages{066106}.
\end{barticle}
%
\bptok{imsref}%
\endbibitem

\bibitem{DF89}
%
\begin{barticle}[mr]
\bauthor{\bsnm{Dyer},~\bfnm{M.~E.}\binits{M.~E.}} \AND
\bauthor{\bsnm{Frieze},~\bfnm{A.~M.}\binits{A.~M.}}
(\byear{1989}).
\btitle{The solution of some random NP-hard problems in polynomial
expected time}.
\bjournal{J. Algorithms}
\bvolume{10}
\bpages{451--489}.
\bid{doi={10.1016/0196-6774(89)90001-1}, issn={0196-6774}, mr={1022107}}
\end{barticle}
%
\bptok{imsref}%
\endbibitem

\bibitem{EvKePeSc00}
%
\begin{barticle}[mr]
\bauthor{\bsnm{Evans},~\bfnm{William}\binits{W.}},
\bauthor{\bsnm{Kenyon},~\bfnm{Claire}\binits{C.}},
\bauthor{\bsnm{Peres},~\bfnm{Yuval}\binits{Y.}} \AND
\bauthor{\bsnm{Schulman},~\bfnm{Leonard~J.}\binits{L.~J.}}
(\byear{2000}).
\btitle{Broadcasting on trees and the {I}sing model}.
\bjournal{Ann. Appl. Probab.}
\bvolume{10}
\bpages{410--433}.
\bid{doi={10.1214/aoap/1019487349}, issn={1050-5164}, mr={1768240}}
\end{barticle}
%
\bptok{imsref}%
\endbibitem

\bibitem{HLL83}
%
\begin{barticle}[mr]
\bauthor{\bsnm{Holland},~\bfnm{Paul~W.}\binits{P.~W.}},
\bauthor{\bsnm{Laskey},~\bfnm{Kathryn~Blackmond}\binits{K.~B.}} \AND
\bauthor{\bsnm{Leinhardt},~\bfnm{Samuel}\binits{S.}}
(\byear{1983}).
\btitle{Stochastic blockmodels: First steps}.
\bjournal{Social Networks}
\bvolume{5}
\bpages{109--137}.
\bid{doi={10.1016/0378-8733(83)90021-7}, issn={0378-8733}, mr={0718088}}
\bptnote{check pages}%
\end{barticle}
%
\bptok{imsref}%
\endbibitem

\bibitem{JansonMossel04}
%
\begin{barticle}[mr]
\bauthor{\bsnm{Janson},~\bfnm{Svante}\binits{S.}} \AND
\bauthor{\bsnm{Mossel},~\bfnm{Elchanan}\binits{E.}}
(\byear{2004}).
\btitle{Robust reconstruction on trees is determined by the second eigenvalue}.
\bjournal{Ann. Probab.}
\bvolume{32}
\bpages{2630--2649}.
\bid{doi={10.1214/009117904000000153}, issn={0091-1798}, mr={2078553}}
\end{barticle}
%
\bptok{imsref}%
\endbibitem

\bibitem{JS98}
%
\begin{barticle}[mr]
\bauthor{\bsnm{Jerrum},~\bfnm{Mark}\binits{M.}} \AND
\bauthor{\bsnm{Sorkin},~\bfnm{Gregory~B.}\binits{G.~B.}}
(\byear{1998}).
\btitle{The {M}etropolis algorithm for graph bisection}.
\bjournal{Discrete Appl. Math.}
\bvolume{82}
\bpages{155--175}.
\bid{doi={10.1016/S0166-218X(97)00133-9}, issn={0166-218X}, mr={1609981}}
\end{barticle}
%
\bptok{imsref}%
\endbibitem

\bibitem{KestenStigum66}
%
\begin{barticle}[mr]
\bauthor{\bsnm{Kesten},~\bfnm{H.}\binits{H.}} \AND
\bauthor{\bsnm{Stigum},~\bfnm{B.~P.}\binits{B.~P.}}
(\byear{1966}).
\btitle{Additional limit theorems for indecomposable multidimensional
{G}alton--{W}atson processes}.
\bjournal{Ann. Math. Statist.}
\bvolume{37}
\bpages{1463--1481}.
\bid{issn={0003-4851}, mr={0200979}}
\end{barticle}
%
\bptok{imsref}%
\endbibitem

\bibitem{KMMNSZZ13}
%
\begin{barticle}[mr]
\bauthor{\bsnm{Krzakala},~\bfnm{Florent}\binits{F.}},
\bauthor{\bsnm{Moore},~\bfnm{Cristopher}\binits{C.}},
\bauthor{\bsnm{Mossel},~\bfnm{Elchanan}\binits{E.}},
\bauthor{\bsnm{Neeman},~\bfnm{Joe}\binits{J.}},
\bauthor{\bsnm{Sly},~\bfnm{Allan}\binits{A.}},
\bauthor{\bsnm{Zdeborov{\'a}},~\bfnm{Lenka}\binits{L.}} \AND
\bauthor{\bsnm{Zhang},~\bfnm{Pan}\binits{P.}}
(\byear{2013}).
\btitle{Spectral redemption in clustering sparse networks}.
\bjournal{Proc. Natl. Acad. Sci. USA}
\bvolume{110}
\bpages{20935--20940}.
\bid{doi={10.1073/pnas.1312486110}, issn={1091-6490}, mr={3174850}}
\end{barticle}
%
\bptok{imsref}%
\endbibitem

\bibitem{LLDM08}
%
\begin{binproceedings}[auto:parserefs-M02]
\bauthor{\bsnm{Leskovec},~\bfnm{J.}\binits{J.}},
\bauthor{\bsnm{Lang},~\bfnm{K.~J.}\binits{K.~J.}},
\bauthor{\bsnm{Dasgupta},~\bfnm{A.}\binits{A.}} \AND
\bauthor{\bsnm{Mahoney},~\bfnm{M.~W.}\binits{M.~W.}}
(\byear{2008}).
\btitle{Statistical properties of community structure in large social
and information networks}.
In \bbooktitle{Proceeding of the 17th International Conference on World
Wide Web}
\bpages{695--704}.
\bpublisher{ACM},
\blocation{New York}.
\end{binproceedings}
%
\bptok{imsref}%
\endbibitem

\bibitem{Lyons13}
%
\begin{bbook}[auto:parserefs-M02]
\bauthor{\bsnm{Lyons},~\bfnm{Russell}\binits{R.}} \AND
\bauthor{\bsnm{Peres},~\bfnm{Yuval}\binits{Y.}}
(\byear{2016}).
\btitle{Probability on Trees and Networks}.
\bpublisher{Cambridge Univ. Press}, \blocation{Cambridge}.
\bnote{Available at \surl{http://pages.iu.edu/\textasciitilde rdlyons/}}.
\end{bbook}
%
\bptok{imsref}%
\endbibitem

\bibitem{Massoulie13}
%
\begin{bincollection}[mr]
\bauthor{\bsnm{Massouli{\'e}},~\bfnm{Laurent}\binits{L.}}
(\byear{2014}).
\btitle{Community detection thresholds and the weak {R}amanujan property}.
In \bbooktitle{S{TOC}'14---{P}roceedings of the 2014 ACM {S}ymposium on
{T}heory of {C}omputing}
\bpages{694--703}.
\bpublisher{ACM},
\blocation{New York}.
\bid{mr={3238997}}
\end{bincollection}
%
\bptok{imsref}%
\endbibitem

\bibitem{M01}
%
\begin{bincollection}[mr]
\bauthor{\bsnm{McSherry},~\bfnm{Frank}\binits{F.}}
(\byear{2001}).
\btitle{Spectral partitioning of random graphs}.
In \bbooktitle{42nd IEEE {S}ymposium on {F}oundations of {C}omputer
{S}cience ({L}as {V}egas, NV, 2001)}
\bpages{529--537}.
\bpublisher{IEEE Computer Soc.},
\blocation{Los Alamitos, CA}.
\bid{mr={1948742}}
\end{bincollection}
%
\bptok{imsref}%
\endbibitem

\bibitem{MMS12}
%
\begin{barticle}[mr]
\bauthor{\bsnm{Montanari},~\bfnm{Andrea}\binits{A.}},
\bauthor{\bsnm{Mossel},~\bfnm{Elchanan}\binits{E.}} \AND
\bauthor{\bsnm{Sly},~\bfnm{Allan}\binits{A.}}
(\byear{2012}).
\btitle{The weak limit of {I}sing models on locally tree-like graphs}.
\bjournal{Probab. Theory Related Fields}
\bvolume{152}
\bpages{31--51}.
\bid{doi={10.1007/s00440-010-0315-6}, issn={0178-8051}, mr={2875752}}
\end{barticle}
%
\bptok{imsref}%
\endbibitem

\bibitem{MoNeSl14b}
%
\begin{bincollection}[auto:parserefs-M02]
\bauthor{\bsnm{Mossel},~\bfnm{E.}\binits{E.}},
\bauthor{\bsnm{Neeman},~\bfnm{J.}\binits{J.}} \AND
\bauthor{\bsnm{Sly},~\bfnm{A.}\binits{A.}}
(\byear{2014}).
\btitle{Belief propagation, robust reconstruction, and optimal recovery
of block models (extended abstract), vol.~35}.
In \bbooktitle{JMLR Workshop and Conference Proceedings (COLT Proceedings)}
\bpages{1--35}.
\blocation{Barcelona, Spain}.
\end{bincollection}
%
\bptok{imsref}%
\endbibitem

\bibitem{MoNeSl14}
%
\begin{bmisc}[auto:parserefs-M02]
\bauthor{\bsnm{Mossel},~\bfnm{Elchanan}\binits{E.}},
\bauthor{\bsnm{Neeman},~\bfnm{Joe}\binits{J.}} \AND
\bauthor{\bsnm{Sly},~\bfnm{Allan}\binits{A.}}
(\byear{2015}).
\bhowpublished{A proof of the block model threshold conjecture.
Preprint. Available at \arxivurl{arXiv:1311.4115}.}
\end{bmisc}
%
\bptok{imsref}%
\endbibitem


\bibitem{MoNeSl13}
%
\begin{barticle}[auto]
\bauthor{\bsnm{Mossel},~\bfnm{Elchanan}\binits{E.}},
\bauthor{\bsnm{Neeman},~\bfnm{Joe}\binits{J.}} \AND
\bauthor{\bsnm{Sly},~\bfnm{Allan}\binits{A.}}
(\byear{2015}).
\btitle{Reconstruction and estimation in the planted partition model}.
\bjournal{Probab. Theory Related Fields}
\bvolume{162}
\bpages{431--461}.
\end{barticle}
%
\bptok{imsref}%
\endbibitem

\bibitem{Sly11}
%
\begin{barticle}[mr]
\bauthor{\bsnm{Sly},~\bfnm{Allan}\binits{A.}}
(\byear{2011}).
\btitle{Reconstruction for the {P}otts model}.
\bjournal{Ann. Probab.}
\bvolume{39}
\bpages{1365--1406}.
\bid{doi={10.1214/10-AOP584}, issn={0091-1798}, mr={2857243}}
\end{barticle}
%
\bptok{imsref}%
\endbibitem

\bibitem{SN97}
%
\begin{barticle}[mr]
\bauthor{\bsnm{Snijders},~\bfnm{Tom~A.~B.}\binits{T.~A.~B.}} \AND
\bauthor{\bsnm{Nowicki},~\bfnm{Krzysztof}\binits{K.}}
(\byear{1997}).
\btitle{Estimation and prediction for stochastic blockmodels for graphs
with latent block structure}.
\bjournal{J. Classification}
\bvolume{14}
\bpages{75--100}.
\bid{doi={10.1007/s003579900004}, issn={0176-4268}, mr={1449742}}
\end{barticle}
%
\bptok{imsref}%
\endbibitem

\bibitem{Strogatz01}
%
\begin{barticle}[auto:parserefs-M02]
\bauthor{\bsnm{Strogatz},~\bfnm{S.~H.}\binits{S.~H.}}
(\byear{2001}).
\btitle{Exploring complex networks}.
\bjournal{Nature}
\bvolume{410}
\bpages{268--276}.
\end{barticle}
%
\bptok{imsref}%
\endbibitem
\end{thebibliography}
\end{document}